\newcommand{\necklength}{\mathfrak{nl}}
\DeclareMathOperator{\Glue}{Gl}
\DeclareMathOperator{\Vertex}{Ve}
\DeclareMathOperator{\vertex}{ve}
\DeclareMathOperator{\Edge}{Ed}
\DeclareMathOperator{\edge}{ed}
\DeclareMathOperator{\rank}{rank}
\DeclareMathOperator{\Sec}{Sec}
\DeclareMathOperator{\Map}{Map}
\DeclareMathOperator{\hot}{hot}
\DeclareMathOperator{\Crit}{Crit}
\DeclareMathOperator{\Morse}{Mo}
\DeclareMathOperator{\wind}{wind}
\DeclareMathOperator{\Span}{span}
\DeclareMathOperator{\im}{im}
\DeclareMathOperator{\dom}{dom}
\DeclareMathOperator{\ind}{ind}
\DeclareMathOperator{\Symp}{Symp}
\DeclareMathOperator{\Id}{Id}
\DeclareMathOperator{\CZ}{CZ}
\DeclareMathOperator{\Flow}{Flow}
\DeclareMathOperator{\sgn}{sgn}
\DeclareMathOperator{\Sobalev}{W}
\DeclareMathOperator{\Ltwo}{L^{2}}
\DeclareMathOperator{\rd}{d}
\newcommand{\C}{\mathbb{C}}
\newcommand{\R}{\mathbb{R}}
\newcommand{\Z}{\mathbb{Z}}
\newcommand{\Q}{\mathbb{Q}}
\newcommand{\fX}{\mathfrak{X}}
\newcommand{\disk}{\mathbb{D}}
\newcommand{\grad}{\nabla}
\newcommand{\delbar}{\overline{\partial}}
\newcommand{\Cinfty}{\mathcal{C}^{\infty}}
\newcommand{\sphere}{\mathbb{S}}
\newcommand{\Circle}{\sphere^{1}}
\newcommand{\torus}{\mathbb{T}}
\newcommand{\half}{\frac{1}{2}}
\newcommand{\ModSpace}{\mathcal{M}}
\newcommand{\multisec}{\mathfrak{s}}
\newcommand{\tree}{\thicc{t}}
\newcommand{\be}{\begin{enumerate}}
\newcommand{\ee}{\end{enumerate}}
\newcommand{\Mxi}{(M,\xi)}
\newcommand{\norm}[1]{\left\lVert#1\right\rVert}
\newcommand{\Dlinearized}{\mathbf{D}}
\newcommand{\AsymptoticOp}{\mathbf{A}}
\newcommand{\domainJ}{\mathbf{j}}
\newcommand{\compactSubset}{\thicc{K}}
\newcommand{\NcompactSubset}{\thicc{N}}
\newcommand{\transSubbundle}{\thicc{E}}
\newcommand{\ModSpaceThicc}{\thicc{V}}
\newcommand{\BO}{\mathrm{BO}}
\newcommand{\Openbook}{\mathrm{OB}}
\newcommand{\DG}{\mathrm{DG}}
\newcommand{\thicc}[1]{\pmb{#1}}
\newcommand{\divSet}{\Gamma}
\newcommand{\hypersurface}{S}
\newcommand{\orbit}{\gamma}
\newcommand{\aug}{\thicc{\epsilon}}
\newcommand{\framing}{\mathfrak{f}}
\newcommand{\eigenvalue}{a}
\newcommand{\eigenfunction}{\eta}
\newcommand{\eigenspace}{A}
\newcommand{\eigenbound}{\thicc{\eigenvalue}}
\newcommand{\foliation}{\mathcal{F}}
\newcommand{\leaf}{\mathcal{L}}
\newcommand{\leafInclusion}{\mathcal{I}}
\newcommand{\completion}{\widehat} 
\newtheorem{thm}{Theorem}[section]
\newtheorem{theorem}[thm]{Theorem}
\newtheorem{ex}[thm]{Example}
\newtheorem{prop}[thm]{Proposition}
\newtheorem{defn}[thm]{Definition}
\newtheorem{lemma}[thm]{Lemma}
\newtheorem{cor}[thm]{Corollary}
\newtheorem{rmk}[thm]{Remark}
\newtheorem{notation}[thm]{Notation}
\title{Bourgeois' contact manifolds are tight}
\author{Russell Avdek and Zhengyi Zhou}
\date{\today}
\begin{document}

\begin{abstract}
We prove that Bourgeois' contact structures on $M \times \torus^{2}$ determined by the supporting open books of a contact manifold $\Mxi$ are always tight. The proof is based on a contact homology computation leveraging holomorphic foliations and Kuranishi structures.
\end{abstract}

\maketitle
\numberwithin{equation}{subsection}
\setcounter{tocdepth}{1}
\tableofcontents

\section{Introduction}

Given a contact open book decomposition of a contact manifold $\Mxi$, Bourgeois \cite{Bourgeois02} showed that $M\times \torus^{2}$ carries a natural contact structure, which is now referred to as a \emph{Bourgeois contact manifold}. One motivation behind such a construction was the problem of the existence of contact structures on manifolds of dimension $\geq 5$. By the Giroux correspondence \cite{BHH, Giroux02} any $\Mxi$ has a supporting open book decomposition, hence Bourgeois' construction can be applied to endow $M\times \torus^{2}$ a contact structure whenever $M$ is contact\footnote{We emphasize that Bourgeois' construction depends on the supporting open book, not just the contact manifold.}. For instance, Bougeois' construction shows that every odd dimensional torus admits contact structures. The existence of contact structures was solved in full generality by Borman-Eliashberg-Murphy \cite{BEM}: They establish that the existence of contact structures is equivalent to the existence of almost contact structures -- a purely topological notion --  and moreover a full $h$-principle for overtwisted contact structures.

With the existence of contact structures established, it is natural to investigate the existence of tight (that is, not overtwisted) contact structures. In \cite{Fabio:Constructions}, Gironella establishes that when $\Mxi$ is weakly fillable, then so is $M \times \torus^{2}$, implying tightness \cite{MNW12}. Bowden-Gironella-Moreno \cite{BGM} proved that all $5$-dimensional Bourgeois contact manifolds $M^{3} \times \torus^{2}$ are tight. In this paper we establish that Bourgeois' construction always yields a tight contact structure, in any dimension, even if we start with $\Mxi$ an overtwisted manifold.

\begin{theorem}\label{thm:main}
    Bourgeois' contact structures are always tight. 
\end{theorem}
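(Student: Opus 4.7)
The plan is to prove \Cref{thm:main} by establishing that the contact homology of a Bourgeois contact manifold $M \times \torus^{2}$ is non-zero, since overtwisted contact manifolds have vanishing contact homology in any dimension; combined with the well-definedness of this obstruction (which is where Kuranishi structures are needed), this forces tightness.

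First, I would fix a contact form on $M \times \torus^{2}$ adapted to Bourgeois' construction: starting from a supporting open book for $(M, \xi)$ with binding $\binding$ and pages $\page$, the contact form takes the schematic shape $\alpha + h_{1}\, d\phi_{1} + h_{2}\, d\phi_{2}$ where the $h_{i}$ depend on the page coordinates and $(\phi_{1}, \phi_{2})$ parametrize $\torus^{2}$. With care one can ensure the Reeb vector field is tangent to the pre-Lagrangian tori $T_{p} = \{p\} \times \torus^{2}$ over points $p \in \binding$, producing a Morse--Bott family of closed Reeb orbits organized by rational slopes on $\torus^{2}$. These binding-tori orbits are the natural candidate generators that will witness non-triviality; other Reeb orbits can be pushed to arbitrarily large action by rescaling the $M$-factor, so only these low-action orbits are relevant below any fixed action window.

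The heart of the computation is a moduli space analysis for punctured holomorphic curves in the symplectization $\R \times M \times \torus^{2}$ with asymptotics on the binding tori. I would choose a $\torus^{2}$-invariant almost complex structure so that the projection to the $\R \times \torus^{2}$-factor gives a holomorphic foliation: each holomorphic curve projects onto a holomorphic curve in a four-dimensional model, and its $M$-component is essentially forced. This foliation should reduce the contact homology differential for the candidate generator to a count of curves in the four-dimensional $\R \times \torus^{2}$ model (which one can compute by hand or identify with known invariants of $\torus^{2}$-bundles) and show that the class of the short binding-torus orbit, or of the unit, is not a boundary. The resulting non-vanishing class obstructs overtwistedness.

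The main obstacle will be making the contact homology and the foliation-count rigorously defined, since both suffer from transversality failures: the binding-torus orbits appear in Morse--Bott families, and the holomorphic foliation is typically non-regular in the classical sense (every leaf contributes with the same sign, and multiple covers proliferate). This is why I would work in the Kuranishi framework, constructing Kuranishi charts on the compactified moduli spaces that respect the product/foliation structure, and using multivalued perturbations to define the differential. The secondary obstacle is SFT compactness for the relevant curves: I must rule out breaking into configurations involving orbits away from the binding tori, which requires action/energy estimates that exploit the $\torus^{2}$-symmetry. Provided these are carried out, the Kuranishi count is independent of all choices, so the non-vanishing established in the foliated model persists and \Cref{thm:main} follows.
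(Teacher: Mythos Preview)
Your proposal diverges substantially from the paper's argument and, as written, has a concrete gap. The paper does \emph{not} compute $CH(M\times\torus^{2})$ directly. It passes to the $\Z$-cover $M\times\Circle\times\R_{\tau}$, recognizes this as the $\R_{\tau}$-invariant neighborhood of a convex hypersurface $\hypersurface=M\times\Circle$, and uses \Cref{prop:BO} to decompose $\hypersurface$ into two copies of $W=V\times\disk^{\ast}\Circle$ glued along $\divSet=\partial W$ by a contactomorphism $\psi_{\BO}$. The Algebraic Giroux Criterion (\Cref{thm:AGC}) then reduces non-vanishing of $CH$ to showing that the two filling augmentations of $CC(\divSet)$ are DG homotopic. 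The analytic core is \Cref{thm:no_augmentation}: the augmentation $\aug_{W}$ is \emph{trivial}, because a codimension-$2$ holomorphic foliation of $\completion{W}=\completion{V}\times\completion{\Sigma}$ by copies of $\completion{V}$ forces every plane into a leaf (via positivity of intersections, \Cref{Lemma:PlaneInLeaf}), and the normal operator then has index $\ge 1$, killing all $\ind=0$ planes. Since $\psi_{\BO}$ is contact isotopic to the identity (\Cref{lem:Gray}), the other augmentation is DG homotopic to the same trivial one (\Cref{prop:null}).

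Your scheme instead works in $\R\times M\times\torus^{2}$ and asserts a holomorphic projection to a ``four-dimensional model'' $\R\times\torus^{2}$; but $\R\times\torus^{2}$ is three-dimensional, so it carries no almost complex structure, and the fibers (odd-dimensional copies of $M$) cannot be holomorphic leaves. Even read charitably, you have not supplied the mechanism---intersection theory, index calculation, or otherwise---that would confine curves to leaves or make the $M$-component ``essentially forced''; this is precisely what \S\ref{Sec:Foliation}--\S\ref{Sec:Fredholm} carry out in the quite different setting of the Liouville filling $\completion{W}$. Moreover, exhibiting a non-exact class directly in $CH(M\times\torus^{2})$ would require controlling differentials involving orbits tied to the open-book monodromy $\phi$; the paper sidesteps this entirely, since in its setup the monodromy enters only through the gluing map $\psi_{\BO}$, which is disposed of by a contact isotopy.
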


\Cref{thm:main} answers Lisi-Marinkovi{\'c}- Niederkr{\"u}ger's question \cite[Question 1.2.(a)]{LMN} (also \cite[Question 32]{BGM}) in the negative. Since each $M \mapsto M \times \{ pt\} \subset M\times \torus^2$ is a contact embedding of $\Mxi$ with trivial normal bundle, the following foundational result of Hern\'andez-Corbato, Mart\'in-Merch\'an and Presas \cite{tightneighorhood} is esablished as an immediate consequence of Theorem \ref{thm:main}. 

\begin{cor}
For any contact manifold $(M,\xi=\ker \alpha)$, there exists $\epsilon>0$ such that $(M\times \disk_{\epsilon}, \ker(\alpha+r^2\rd \theta))$ is tight. 
\end{cor}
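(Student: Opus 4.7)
The plan is to deduce this corollary directly from \Cref{thm:main} combined with the standard contact neighborhood theorem for contact submanifolds with trivial symplectic normal bundle. As the discussion preceding the corollary indicates, the argument should be essentially immediate once the right setup is in place.

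First, by the Giroux correspondence, I would choose any supporting open book for $(M, \xi = \ker \alpha)$ and apply Bourgeois' construction to produce a contact structure $\xi_{B}$ on $M \times \torus^{2}$; by \Cref{thm:main}, $(M \times \torus^{2}, \xi_{B})$ is tight. For any basepoint $q_{0} \in \torus^{2}$, the slice inclusion $\iota\colon M \hookrightarrow M \times \{q_{0}\}$ is a contact embedding of $(M, \xi)$ into $(M \times \torus^{2}, \xi_{B})$, since the Bourgeois form restricts to $\alpha$ along such a slice. The symplectic normal bundle of this submanifold is identified with $T_{q_{0}}\torus^{2} \cong \R^{2}$ and is therefore trivial.

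Next, I would invoke the standard contact neighborhood theorem for codimension-$2$ contact submanifolds with trivial symplectic normal bundle. This produces an $\epsilon > 0$ together with a contactomorphism from $(M \times \disk_{\epsilon}, \ker(\alpha + r^{2}\rd\theta))$ onto an open neighborhood $U$ of $M \times \{q_{0}\}$ inside $(M \times \torus^{2}, \xi_{B})$. Since tightness passes to open subsets --- any overtwisted disk contained in $U$ would give an overtwisted disk in the ambient manifold, contradicting tightness of $\xi_{B}$ --- the corollary follows.

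The only step requiring actual verification is the contact neighborhood identification. This is a classical Moser-type result: the Bourgeois form and the model $\alpha + r^{2}\rd\theta$ restrict to the same contact form $\alpha$ along $M \times \{q_{0}\} = M \times \{0\}$ and have isomorphic (trivial) conformal symplectic normal bundles, so the standard tubular-neighborhood argument via interpolation of $1$-forms and the Moser trick produces the desired contactomorphism on a small enough neighborhood. I do not anticipate any further obstacle.
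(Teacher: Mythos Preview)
Your proposal is correct and matches the paper's approach exactly: the paper states the corollary as an immediate consequence of \Cref{thm:main} via the observation that each $M \times \{pt\} \subset M \times \torus^{2}$ is a contact embedding with trivial normal bundle, and you have simply spelled out the standard neighborhood-theorem details that make this immediate.
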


\subsection{Outline}\label{Sec:IntroOutline}

Here we provide a brief overview of the proof of Theorem \ref{thm:main} and of the remainder of the article, starting with some generalities and a review of previous result.

To prove tightness of contact manifolds of dimension $\geq 5$, the only currently available tool is the non-vanishing of contact homology, $CH$ \cite{BH, Pardon:Contact}. A combination of Bourgeois-van Koert's \cite{BvK} and Casals-Murphy-Presas' \cite{CMP} shows that contact homology vanishes for any overtwisted contact manifold. So if contact homology is non-vanishing, then the underlying contact manifold is tight. 

The criterion has many avatars in applications, e.g.\ the existence of symplectic fillings, the hyper-tight property, properties on the Conley-Zehnder indices, etc. For example, in the dimension $3$, all Bourgeois contact manifolds are weakly fillable, and Bowden-Gironella-Moreno's proof in dimension $5$ used both fillings and the hyper-tight property.\footnote{A hyper-tight contact manifold is one with a Reeb vector field having no contractible Reeb orbits, guaranteeing the non-vanishing of $CH$.} Their proof relied on properties of mapping class groups of surfaces, as the monodromy in the mapping class group is the only data needed to build a $5$-dimensional Bourgeois contact manifold. However, this is difficult to generalize to higher dimensions as comparatively little is known about symplectic mapping class groups in higher dimensions. Moreover as illustrated in \cite{BGM,bowden2022non},  Bourgeois contact manifolds tend to not be (strongly) fillable in high dimensions.

We prove \Cref{thm:main} by a direct computation of the contact homology of a covering $M \times \Circle \times \R_{\tau} $ of the Bourgeois contact manifold, which is an infinite neighborhood of a ``Bourgeois convex hypersurface'' $\hypersurface = M\times \Circle$ (with $\R_{\tau}$ invariant contact structure). Specifically, we use the sutured contact homology of \cite{CGHH} which can be applied to non-compact manifolds such as $\R_{\tau} \times \hypersurface$. We show that $M \times \Circle\times \R_{\tau}$ has non-zero contact homology, from which it follows that this non-compact manifold is tight. Since $M \times \torus^{2}$ has a tight covering, it then follows that $M \times \torus^{2}$ is tight.

Our computation of the contact homology of $M \times \Circle\times \R_{\tau}$ is based on \cite{Avdek:Hypersurface}, which computes the contact homology of neighborhoods of convex hypersurfaces in general. Here we apply similar techniques to those of \cite{Avdek:Hypersurface}, leveraging holomorphic foliations and Kuranishi structures. Throughout, we use Bao-Honda's formulation of $CH$ \cite{BH} using a Kuranishi perturbation scheme and expect that an application of Pardon's perturbation scheme \cite{Pardon:Contact} would yield an identical result. We note that the proof of the vanishing of $CH$ for overtwisted contact manifolds in \cite{BvK} is independent of perturbation schemes and so the results hold for any formulation of $CH$.

In \S \ref{Sec:Bourgeois} we provide an overview of Bourgeois construction and provide a decomposition of the convex hypersurface $\hypersurface = M \times \Circle$ into positive and negative regions. When $\Mxi$ is presented as an open book with page $V$ and monodromy $\phi$, these positive and negative regions are each copies of $\completion{V} \times T^{\ast}\Circle$ and their boundaries are identified with a contactomorphism determined by $\phi$. Here and throughout, $\completion{V}$ denotes the completion of a Liouville manifold $V$.

In \S \ref{Sec:Foliation} through \S \ref{Sec:VanishingAug}, we study holomorphic curves in complete Liouville manifolds of the form $\completion{W}=\completion{V} \times \completion{\Sigma}$ where $\Sigma$ is a non-simply connected Riemann surface. This generalizes the case $\completion{\Sigma} = T^{\ast}\Circle$ relevant to Bourgeois contact manifolds as mentioned above. The analysis in these sections culminates in Theorem \ref{thm:no_augmentation}, which shows that with a specific choice of contact form on its ideal boundary $\divSet = \partial W$, almost complex structures and perturbation data, the augmentation $\aug_{W}$ associated to this filling of $\divSet$ is \emph{trivial} in the sense that $\aug_{W}\orbit=0$ for each Reeb orbit generator $\orbit$ of the chain-level contact homology algebra of $\divSet$.

Finally in \S \ref{Sec:NonVanishing}, we establish that the augmentation associated to the negative region of the convex hypersurface is DG homotopic to a trivial augmentation. This is \emph{not} tautological, as from the perspective of the negative region, the boundary contact form is different from the positive region and they are related by a \emph{strict contactomorphism}, i.e.\ a diffeomorphism preserving contact forms. The properties of these augmentations combine the with the Algebraic Giroux Criterion of \cite{Avdek:Hypersurface} to establish that $M \times \Circle \times \R_{\tau}$ has non-vanishing contact homology.

\subsection*{Acknowledgments}
\begin{wrapfigure}{r}{0.07\textwidth}
	\centering
	\includegraphics[width=.07\textwidth]{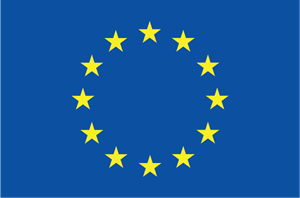}
\end{wrapfigure}

R.A. thanks Cofund MathInGreaterParis and the Fondation Mathématique Jacques Hadamard for supporting him as a member of the Laboratoire de Math\'{e}matiques d'Orsay at Universit\'{e} Paris-Saclay. This project has received funding from the European Union’s Horizon 2020 research and innovation programme under the Marie Skłodowska-Curie grant agreement No 101034255. Z.Z. is supported by the National Key R\&D Program of China under Grant No.\ 2023YFA1010500, the National Natural Science Foundation of China under Grant No.\ 12288201 and 12231010.

\section{Bourgeois contact manifolds}\label{Sec:Bourgeois}

\subsection{Bourgeois contact manifolds from contact open books}

Let $(V,\beta_{V})$ be a Liouville domain and $\phi\in \pi_0(Symp_c(V, \beta_{V}))$ be a compactly supported symplectomorphism. Then $\phi^*\beta_{V}=\beta_{V} + \eta$, where $\eta$ is a closed $1$-form which is zero near $\partial V$. The Thurston-Winkelnkemper construction \cite{TW:OpenBook} gives the total space $M=(\disk \times \partial V) \cup V_{\phi}$ of the open book a contact form $\alpha$, where $V_{\phi}=V \times [0,2\pi]_\theta/(x,2\pi)\sim (\phi(x),0)$ is the mapping torus. More precisely,
\begin{equation}\label{eqn:contact_OB}
    \alpha = \begin{cases}
    \beta_{V}|_{\partial V} + Kr^2 \rd \theta & \text{along}\ \disk \times \partial \Sigma, \\
    \beta_{V} + b(\theta)\eta+K \rd \theta & \text{along} \Sigma_{\phi}
    \end{cases}
\end{equation}
for $K\gg 0$ and $b=b(\theta)$ a function equalling $1$ at $\theta=2\pi$ and $0$ at $\theta=0$. Such a contact open book is denoted by
\begin{equation*}
    \Mxi = \Openbook(V, \phi)
\end{equation*}
Strictly speaking, the contact open book has a corner $\Circle \times \partial V$, since its neighborhood with contact form is identified with the neighborhood of the corner in the boundary of the product $(\disk \times V, Kr^2\rd \theta+\lambda)$. One can round it following e.g. \cite[\S 2.1]{Zhou:DiskTimesV}.

Let $\rho: M \rightarrow [0, 1]$ agreeing with $r$ near $\{0\} \times \partial V$ in $\disk \times \partial V$, equalling $1$ along the boundary of this neighborhood, and equalling $1$ elsewhere. Then the functions $\Phi_{1}=\rho\cos(\theta), \Phi_{2}=-\rho\sin(\theta)$ are defined on all of $M$, so that $\Phi=(\Phi_{1}, \Phi_{2}): M \rightarrow \C$ has $0$ as a regular value with inverse image $\partial V$, which is the binding of the open book for $\Mxi$.

\begin{theorem}[Bourgeois \cite{Bourgeois02}] \label{thm:bourgeois}
	 $\alpha_{\mathrm{BO}} := \alpha + \Phi_1 \rd x + \Phi_2 \rd y$ is a contact form on $M \times \torus^{2} = \Openbook(V,\phi)\times \torus^2$, where $(x,y)$ are coordinates on $\torus^2$. The associated contact structure is independent of choices (eg. $K$, $b$, and the isotopy class of $\phi$).
\end{theorem}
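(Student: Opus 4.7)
The plan is to verify the contact condition $\alpha_{\mathrm{BO}}\wedge(d\alpha_{\mathrm{BO}})^{m+1}>0$ pointwise on $M\times\torus^2$, where $\dim M=2m+1$, and then deduce independence of the auxiliary choices from Gray stability. Writing $d\alpha_{\mathrm{BO}}=d\alpha+d\Phi_1\wedge dx+d\Phi_2\wedge dy$, the identities $(d\alpha)^{m+1}=0$ (on $M$ for dimensional reasons) and $dx\wedge dx=dy\wedge dy=0$ reduce the multinomial expansion of $(d\alpha_{\mathrm{BO}})^{m+1}$ to just two surviving summands
\[
(m+1)(d\alpha)^m\wedge(d\Phi_1\wedge dx+d\Phi_2\wedge dy) - m(m+1)(d\alpha)^{m-1}\wedge d\Phi_1\wedge d\Phi_2\wedge dx\wedge dy.
\]
Wedging this with $\alpha_{\mathrm{BO}}=\alpha+\Phi_1 dx+\Phi_2 dy$ and discarding terms missing the $dx\wedge dy$ factor needed for a top form on $\torus^2$ (and using that $\alpha\wedge(d\alpha)^m$ is already of top degree on $M$), a short sign-chase produces
\[
\alpha_{\mathrm{BO}}\wedge(d\alpha_{\mathrm{BO}})^{m+1}=-(m+1)\Bigl(m\,\alpha\wedge(d\alpha)^{m-1}\wedge d\Phi_1\wedge d\Phi_2+(d\alpha)^m\wedge(\Phi_1 d\Phi_2-\Phi_2 d\Phi_1)\Bigr)\wedge dx\wedge dy.
\]

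The second step is to verify positivity on the two model pieces of the open book. On the mapping torus $V_\phi$, where $\rho\equiv 1$ and $\Phi=e^{-i\theta}$, direct computation gives $d\Phi_1\wedge d\Phi_2=0$ and $\Phi_1 d\Phi_2-\Phi_2 d\Phi_1=-d\theta$. The key input is that $\phi\in\Symp_c(V,\beta_V)$ forces $d\eta=0$, so $d\alpha=d\beta_V+b'(\theta)d\theta\wedge\eta$; wedging with the outer $d\theta$ annihilates the $b'$ piece and returns the positive symplectic volume $(d\beta_V)^m\wedge d\theta\wedge dx\wedge dy$. On the binding neighborhood $\disk\times\partial V$, where $\rho=r$ and $\Phi=re^{-i\theta}$, we get $d\Phi_1\wedge d\Phi_2=-r\,dr\wedge d\theta$ and $\Phi_1 d\Phi_2-\Phi_2 d\Phi_1=-r^2d\theta$; the second summand vanishes because $(d\alpha)^m=2mKr(d\beta_V|_{\partial V})^{m-1}\wedge dr\wedge d\theta$ already carries its own $d\theta$, while the first produces the positive multiple $m(m+1)r\,\beta_V|_{\partial V}\wedge(d\beta_V|_{\partial V})^{m-1}\wedge dr\wedge d\theta\wedge dx\wedge dy$.

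The main obstacle lies in the transition region arising from the corner rounding along $\Circle\times\partial V$: here $\rho$ interpolates between $r$ and $1$ and $\alpha$ interpolates between its two model shapes, so the expansion becomes a genuine mixture of the two kinds of terms weighted by $\rho\rho'$, $\rho^2$, and $K$-linear collar corrections. Choosing $K\gg 0$ in the Thurston--Winkelnkemper construction makes the positive contributions dominate uniformly throughout the transition; this is the same largeness hypothesis already needed to ensure that $\alpha$ itself is contact on $M$.

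Finally, for the assertion that $\ker\alpha_{\mathrm{BO}}$ is independent of $(K, b)$ and the isotopy class of $\phi$, any smooth path of admissible data produces, via the same pointwise calculation, a smooth family of contact forms with compactly supported variation, and Gray stability furnishes an ambient isotopy matching the associated contact structures.
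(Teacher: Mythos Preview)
The paper does not supply its own proof of this theorem; it is stated as Bourgeois' result with a citation to \cite{Bourgeois02} and then used as input. So there is no proof in the paper to compare your argument against.

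That said, your computation is the standard one and is correct: the expansion of $\alpha_{\mathrm{BO}}\wedge(d\alpha_{\mathrm{BO}})^{m+1}$ is right, and the verifications on the mapping torus and near the binding are carried out accurately. One small imprecision: your appeal to $K\gg 0$ in the transition region is not quite the mechanism at work. On the binding side (inside $\disk\times\partial V$ with $\alpha=\beta_V|_{\partial V}+Kr^2\,d\theta$) your own formula gives
\[
\alpha_{\mathrm{BO}}\wedge(d\alpha_{\mathrm{BO}})^{m+1}=m(m+1)\,\rho\rho'\,\beta_V|_{\partial V}\wedge(d\beta_V|_{\partial V})^{m-1}\wedge dr\wedge d\theta\wedge dx\wedge dy,
\]
which is independent of $K$ and positive exactly when $\rho'>0$. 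The delicate point is where $\rho'$ vanishes, and that must be arranged to happen only once one has already crossed into the page region where the $(d\alpha)^m\wedge d\theta$ term takes over; this is a matter of how $\rho$ and the corner rounding are synchronized, not of the size of $K$. Bourgeois' formulation makes this cleaner by assuming only that $\alpha$ is adapted to the open book ($d\alpha$ symplectic on pages, $\alpha$ contact on the binding), from which the contact condition for $\alpha_{\mathrm{BO}}$ follows without further largeness hypotheses. Your Gray-stability argument for independence of choices is fine.
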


We refer to such contact manifolds as \emph{Bourgeois contact manifolds}, denoted by $\BO(V,\phi)$. Observe that our contact form (and hence contact structure) is $\torus^{2}$ invariant. In particular, each $M \times \{ x\} \times \Circle_{y}$ is a convex hypersurface with respect to the contact vector field $\partial_{x}$.

\subsection{$\Circle$ invariant contact structures}

In \cite{DingGeiges:S1},  Ding and Geiges showed that any $\Circle$ invariant contact form $\alpha$ on $\Circle_{\tau} \times \hypersurface$ admits a decomposition 
$\Circle_{\tau}\times (W_+\cup_{\Gamma} -W_-)$ with 
\begin{equation*}
\alpha=f\rd \tau +\beta, \quad f\in \Cinfty(\hypersurface), \quad \beta \in \Omega^{1}(\hypersurface)
\end{equation*}
such that on the interior $W^{\circ}_{\pm}$, $\pm f>0$ and $\beta_{\pm} = \pm \beta/f \in \Omega^{1}(W_{\pm}^{\circ})$ define Liouville forms. The symplectic forms $d\beta_{\pm}$ then give the $W_{+}$ ($W_{-}$) the same (the opposite) orientation as is determined by the orientation of $\hypersurface$. It also follows that $\alpha$ is a contact form when restricted to $\divSet=\{f=0, \tau=\tau_{0}\}$ and we write $\xi_{\divSet} = \ker \alpha|_{\divSet}$. With this contact structure, $(\divSet, \xi_{\divSet})$ is the ideal boundary of the $(W_{\pm}, \beta_{\pm})$. In other words, $(W_{\pm}, \beta_{\pm},\ker \beta|_{\Gamma})$ are ideal Liouville domains introduced by Giroux \cite{Giroux20} in the following sense:

\begin{defn}[{\cite[\S 4.2]{MNW12}}]
Let $W$ be a compact $2n$-dimensional manifold with boundary, $\omega$ a symplectic form on the interior $W^\circ$ of $W$ and $\xi$ a contact structure on $\partial X$. The triple $(W, \omega, \xi)$ is an ideal Liouville domain if there exists an auxiliary 1-form $\beta$ on $W^\circ$ such that:
\begin{enumerate}
    \item $\rd \beta = \omega$ on $W^{\circ}$;
    \item For any (hence every) smooth function $f:W \to [0, \infty)$ with regular level set $\partial U= f^{-1}(0)$, the 1-form $f\beta$ extends smoothly to $\partial U$ such that its restriction to $\partial W$ is a contact form for $\xi$.
\end{enumerate}
In this situation, $\beta$ is called a Liouville form for $(W, \omega, \xi)$.
\end{defn}

Here we list some basic properties of ideal Liouville domains.

\begin{enumerate}
    \item The contact structure $\xi$ is determined by $\omega$, \cite[Proposition 2]{Giroux20}.
    \item The space of auxiliary data, i.e.\ $f,\beta$, is contractible. See \cite[Remark 4.3]{MNW12}.
    \item Given a Liouville form $\beta$, the Liouville vector $X$, i.e.\ the $\omega$ dual of $\beta$, is a complete vector field on $W^{\circ}$. Given a function $f$ as in the above definition, $\frac{1}{f}X$ extends smoothly over $\partial W$ and points outward \cite[Lemma 4.5]{MNW12}. More precisely, let $\gamma=f\beta$, which extends to $W$ by definition and $\mu=f^{n+1}\omega^n=f(\rd \gamma)^n-n\rd f \wedge \gamma \wedge (\rd \gamma)^{n-1}$. Then $\frac{1}{f}X$ is characterized by 
    \begin{equation}\label{eqn:X_f}
        \iota_{\frac{1}{f}X} \mu = n\beta\wedge (\rd \beta)^{n-1}.
    \end{equation}
    \item Ideal Liouville domains are stable, in the sense that paths of ideal Liouville structures yields homotopic Liouville structures on $W^{\circ}$ \cite[Lemma 6]{Giroux20}.
    \item Given a Liouville domain $(W,\beta_{W})$, $(\overline{W},\rd(f\beta_{W}),\ker(\beta_{W}|_{\partial W}))$ is an ideal Liouville domain for some positive function $f$ on $W$. Such correspondence is one-to-one up to homotopies of (ideal) Liouville domains.
\end{enumerate}

In the case of $\Circle$ invariant contact structures on $\Circle \times \hypersurface$, $\frac{1}{f}W_+$ and $\frac{1}{f}W_-$ patch together smoothly on $\hypersurface$ and are transverse to $\Gamma$ from $W_{+}$ to $W_{-}$, where $X_{\pm}$ is the Liouville vector field for the two ideal Liouville domains. This follows from Equation \eqref{eqn:X_f}.

Conversely, let $(W_+,\omega_+,\xi_+)$ and $(W_-,\omega_-,\xi_-)$ be two ideal Liouville domains. Given a contactomorphism $\psi:(\partial W_+,\xi_+)\to (\partial W_-,\xi_-)$, then we can build a $\Circle$-invariant contact structure as follows. Choose a contact form $\alpha_+$ for $(\partial W_+,\xi_+)$ such that $\omega_+=\rd \beta_+$ with $f_+\beta_+|_{\partial V_+}=\alpha_+$. Let $\alpha_-=\psi_*\alpha_+$, we find $\beta_-$ and $f_-$, such that $\omega_-=\rd \beta_-$ and $\alpha_-=f_-\beta_-|_{\partial V_-}$. By \cite[Lemma 4.5]{MNW12}, we can choose the $f_{\pm}$ such that there is a collar neighborhood of $\partial W_{\pm} \subset W_{\pm}$ modeled on $(0,1]_s\times \partial W_{\pm}$ with $f_{\pm}=1-s$ and $\beta_{\pm}=\frac{1}{1-s}\alpha_{\pm}$. With such $f_{\pm}$, $\Circle_{\tau} \times (V_+\cup_{\psi} -V_-)$ with $\alpha=\left(f_+\cup (-f_-)\right) \rd \tau + f_+\beta_+\cup f_-\beta_-$ is an $\Circle$-invariant structure. 

\begin{prop}
    The above $\Circle$-contact structure is well-defined, up to the homotopy of the defining data, i.e.\ ideal Liouville structures and contactomorphism.
\end{prop}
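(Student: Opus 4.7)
The plan is to prove both parts of the claim --- independence from auxiliary choices for fixed defining data, and invariance under homotopies of the defining data --- by constructing a smooth one-parameter family of $\Circle$-invariant contact forms on the closed glued manifold $\Circle_{\tau} \times (W_{+} \cup_{\psi} -W_{-})$ and applying Gray stability.

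I would first fix the defining data $(W_{\pm}, \omega_{\pm}, \xi_{\pm}, \psi)$ and address independence of the auxiliary choices $(\alpha_{+}, \beta_{\pm}, f_{\pm})$. Two contact forms representing the same contact structure $\xi_{+}$ can be joined by a path of contact forms via convex interpolation, giving a smooth path $\alpha_{+}^{t}$; its push-forward under $\psi$ gives $\alpha_{-}^{t}$. By Property (2) of ideal Liouville domains listed in the excerpt, for each $t$ the space of admissible pairs $(f_{\pm}, \beta_{\pm})$ with $\rd \beta_{\pm} = \omega_{\pm}$ and $f_{\pm} \beta_{\pm}|_{\partial W_{\pm}} = \alpha_{\pm}^{t}$ is contractible, so I can choose a smooth family $(f_{\pm}^{t}, \beta_{\pm}^{t})$. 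A parametric version of the collar argument in \cite[Lemma 4.5]{MNW12} then allows me to further assume $f_{\pm}^{t} = 1-s$ and $\beta_{\pm}^{t} = \tfrac{1}{1-s}\alpha_{\pm}^{t}$ on a uniform collar $(0,1]_{s} \times \partial W_{\pm}$ for every $t$. Assembling the two sides produces a smooth path of $\Circle$-invariant contact 1-forms $\alpha^{t}$ on the closed manifold, and Gray stability yields the desired isotopy from $\alpha^{0}$ to $\alpha^{1}$.

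For a smooth homotopy of defining data $(\omega_{\pm}^{t}, \xi_{\pm}^{t}, \psi^{t})$, where $\psi^{t}$ is a smooth path of contactomorphisms between the varying contact boundaries, I would argue similarly. Property (4) of ideal Liouville domains gives smoothly varying $(\beta_{\pm}^{t}, f_{\pm}^{t})$ compatible with $\omega_{\pm}^{t}$, and the parametric collar argument again normalizes them. Since $\psi^{t}$ is a smooth family of diffeomorphisms $\partial W_{+} \to \partial W_{-}$, the glued smooth manifolds $W_{+} \cup_{\psi^{t}} -W_{-}$ are mutually diffeomorphic via a canonical isotopy supported in a collar of $\divSet$, so I can pull everything back to a single model and obtain a smooth family of $\Circle$-invariant contact forms. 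Gray stability on this closed contact manifold again produces the claimed isotopy.

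The main obstacle I anticipate is the parametric collar construction: arranging $f_{\pm}^{t} \beta_{\pm}^{t} = \tfrac{1}{1-s}\alpha_{\pm}^{t}$ in a uniform collar for every $t$ is what guarantees that the glued form $\alpha^{t}$ is genuinely $C^{\infty}$ across $\divSet$, and not merely continuous, and that it restricts there to a contact form for $\xi_{\divSet}$. Once this normalization is in place, the matching of $f_{+}^{t} \beta_{+}^{t}$ and $f_{-}^{t} \beta_{-}^{t}$ across $\divSet$ follows from $\alpha_{-}^{t} = \psi^{t}_{\ast} \alpha_{+}^{t}$, the $\rd\tau$-coefficient vanishes to first order at $\divSet$, and the contact condition is verified by the same local computation as in the construction recorded immediately before the proposition. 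The remaining analytic input is the standard Gray stability theorem on a compact contact manifold.
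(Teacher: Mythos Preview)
Your proposal is correct and follows essentially the same route as the paper: invoke contractibility of the auxiliary data and stability of ideal Liouville domains to produce a one-parameter family of $\Circle$-invariant contact forms, then apply Gray stability. The paper's proof is a two-sentence sketch of exactly this argument, and your version simply unpacks the steps (parametric collar normalization, handling of the varying gluing map $\psi^{t}$) that the paper leaves implicit.
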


\begin{proof}
By stability of ideal Liouville domains and the contractibility of auxiliary choices, homotopies of the defining data give rise to families of $\Circle$-invariant contact structures via the construction above. So the claim follows from the Gray's stability theorem.
\end{proof}

\begin{defn}
We write $\DG(W_+,W_-,\psi)$ for the $\Circle$-invariant contact manifold determined by the above construction. 
\end{defn}

\begin{ex}\label{ex:trivial}
    Let $(V,\beta_{V})$ be a Liouville domain. Then $(\partial(V\times [-1,1]_{p} \times\Circle_{q}), \alpha:=\beta+p\rd q)$ is an $\Circle$-invariant contact manifold, where the $\Circle$ action is given by the rotation in $q\in \Circle$ coordinate. Strictly speaking, we need to round the corner, which clearly will not affect the discussion. Hence $f=\alpha(\partial_q)=p$, which is a smooth function after rounding the corner. Then we have that $(\partial(V\times \disk^*\Circle), \alpha)=\DG(V,V,\Id)$.
\end{ex}

Returning to the case of Bourgeois contact manifolds $\Openbook(V,\phi)\times \Circle_x\times \Circle_y$, we can view it as an $\Circle_y$ invariant contact form. We first start with the trivial case:

\begin{ex}\label{ex:trivial_BO}
We consider $\BO(V,\phi)$, which is equipped with an $\Circle_y$-action. The fact that $\BO(V,\Id)=\partial (V \times \disk^*\Circle_x\times \disk^*\Circle_y)$ was established in \cite[Theorem A(b)]{LMN} by explicit computations. Here we give an alternative approach using abstract homotopies. Such perspective will be helpful for the general case of \Cref{prop:BO} appearing below.

Note that $f=\alpha_{\mathrm{BO}}(\partial_y)=\Phi_2$. Therefore the dividing set $\Gamma$ is given by 
\begin{equation*}
    \left(\left(V \times \{(-1,0)_{a,b}\} \cup V \times \{(1,0)_{a,b}\}\cup \partial V \times  [-1,1]_a \times \{0\}_b \right)\times \Circle_x, \beta_{V}+a\rd x)\right)
\end{equation*}
where $(a,b)=(\Phi_1,\Phi_2)$ are the coordinates on $\disk$. That is $\divSet$ is $\partial(V \times \disk^*\Circle)$. The $V_+$ piece, as an ideal Liouville domain, is given by 
\begin{equation*}
    \left(\left(V \times \partial \disk_{b>0} \cup \partial V \times \disk_{b>0}\right)\times \Circle_x,\beta_K:=\frac{\lambda+Ka \rd b-Kb\rd a+a\rd x}{b}\right)
\end{equation*}
where $\disk_{b>0}$ is the disk in the $b>0$ half plane and $\partial \disk_{b>0}$ is the arc in the $b>0$ half plane. By tuning down $K$, it is clear the contact form on the ideal boundary does not change. Moreover, in the trivial open case, the Thurston-Winkelnkemper construction \cite{TW:OpenBook} as well as the Bourgeois construction works for any $K>0$, i.e.\ $\rd \beta_K$ is symplectic for any $K>0$. When $K=0$, it is direct to check that $\rd \beta_0$ is symplectic\footnote{It is easier to check using $(\frac{a}{b},b)$ as coordinate on $\disk_{b>0}$.}. Therefore, $W_+$ is homotopic (and contact structure on the ideal boundary is strictly fixed in this homotopy) to 
\begin{equation*}
    \left(\left(V \times \partial \disk_{b>0} \cup \partial V \times \disk_{b>0}\right)\times \Circle_x,\beta_0:=\frac{\beta_{V}+a\rd x}{b}\right).
\end{equation*}
Similarly, the $W_-$ piece is homotopic to 
\begin{equation*}
    \left(\left(V \times \partial \disk_{b<0} \cup \partial V \times \disk_{b<0}\right)\times \Circle_x,-\frac{\beta_{V}+a\rd x}{b}\right).
\end{equation*}
Now note that $(\partial(V\times \disk \times \Circle_x\times \Circle_y), \beta_{V}+a\rd x+b\rd y)$ is also an $\Circle_y$-invariant contact form, with the same $f$ and dividing set $\divSet$ as $\BO(\Sigma,\Id)$. And the associated $W_{\pm}$ pieces are the above ideal Liouville domains after homotopy. Hence  we have $\BO(V,\Id)$ is homotopic to $(\partial(V\times \disk \times \Circle_x\times \Circle_y), \ker(\beta_{V}+a\rd x+b\rd y))$ as $\Circle$-invariant contact manifolds. Finally, we deform $\disk$ to $[-1,1]^2$ in $\R^2$ in an obvious way. Such deformation yields a family of $\Circle_y$-invariant contact manifolds with the same dividing set, and the $W_{\pm}$ are deformation equivalent ideal Liouville domains. Therefore we have $\BO(V,\Id)$ is homotopic to $\partial (V \times \disk^*\Circle_x\times \disk^*\Circle_y)=\partial(V\times [-1,1]^2 \times \Circle_x\times \Circle_y)$ as $\Circle$-invariant contact manifolds. By \Cref{ex:trivial}, the $W_{\pm}$ pieces of $\partial (V \times \disk^*\Circle_x\times \disk^*\Circle_y)$ are $V \times \disk^*\Circle_x$, hence so are $W_{\pm}$ pieces of $\BO(V,\Id)$, i.e.\ $\BO(V,\Id)=\DG(V \times \disk^*\Circle,\Id)$.
\end{ex}

\begin{prop}\label{prop:BO}
Let $\phi$ be a compactly supported symplectomorphism of $(V, \beta_{V})$. Writing $(W, \beta_{W})$ for $(V \times \disk^{\ast}\Circle, \beta_{V} + p\rd q)$ with there is a contactomorphism $\psi_{\BO}$ of $\partial W$ for which
\begin{equation*}
    \BO(V,\phi)=\DG(W, W,\psi_{\BO}).
\end{equation*}
\end{prop}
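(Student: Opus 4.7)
The plan is to parallel Example \ref{ex:trivial_BO}, with the monodromy $\phi$ absorbed into the boundary gluing contactomorphism $\psi_{\BO}$. First, since $f = \alpha_{\mathrm{BO}}(\partial_y) = \Phi_2 = -\rho\sin\theta$ is independent of $\phi$, the dividing set $\Gamma = \{f=0\}$ is the same underlying smooth submanifold as in the trivial case, namely $\partial(V \times \disk^{\ast}\Circle_x)$, and the topological decomposition $\hypersurface = W_+ \cup_\Gamma (-W_-)$ coincides with the one in Example \ref{ex:trivial_BO}. Only the induced contact structure on $\Gamma$ and the Liouville primitives on the interiors $W_\pm^{\circ}$ are modified by $\phi$, through the twisting term $b(\theta)\eta$ appearing in the Thurston--Winkelnkemper form \eqref{eqn:contact_OB}.

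Next, I would use that $\phi$ is compactly supported in the interior of $V$ and is isotopic within its symplectic isotopy class to representatives whose support in the mapping torus angle $\theta$ is arbitrarily small. This allows arranging the gluing defining $V_\phi$ and the support of $\eta$ to lie in a thin neighborhood of a single angle $\theta_0 \in (\pi, 2\pi)$, i.e.\ entirely inside the region $\{\Phi_2 > 0\} = W_+$. With this setup, the Liouville primitive on $W_-$ is exactly the untwisted one from Example \ref{ex:trivial_BO}, so the $K \to 0$ deformation followed by the disk-to-square deformation there applies verbatim to identify $W_-$ with $V \times \disk^{\ast}\Circle_x$ as ideal Liouville domains. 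The same two-step deformation is then carried out on $W_+$; because $d\eta = 0$ and $\eta$ vanishes near $\partial V$, the twisted term contributes only a closed perturbation to the primitive, so $\rd\beta$ remains symplectic throughout the deformation and the induced boundary contact form is strictly controlled. This identifies $W_+$ with $V \times \disk^{\ast}\Circle_x$ as well.

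The gluing contactomorphism $\psi_{\BO}:\partial W \to \partial W$ is then read off by tracking the two boundary identifications through the homotopies; it is, up to contact isotopy, the composition of the trivial gluing of Example \ref{ex:trivial_BO} with a contactomorphism supported near the $\theta_0$-slice of $\Gamma$ and determined by $\phi$. The proposition then follows from Gray stability and the preceding well-definedness Proposition for $\DG$, which guarantees that the resulting $\Circle$-invariant contact manifold is independent of the auxiliary choices made in the deformation.

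The main obstacle I anticipate is the $W_+$ analysis: verifying that the family of $1$-forms obtained by tuning $K$ down in the presence of $b(\theta)\eta$ stays a family of ideal Liouville forms. Specifically, one must check that $\rd\beta$ remains non-degenerate on all of $W_+^{\circ}$ and that the collar structure along $\Gamma$ of \cite[Lemma 4.5]{MNW12} persists uniformly, so that the boundary contact structure evolves only by a contact isotopy rather than a discontinuous jump, allowing the final identification of $\psi_{\BO}$ to be made unambiguously.
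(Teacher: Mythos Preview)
Your strategy is essentially the paper's, but you make the $W_+$ analysis harder than necessary and your justification for it does not go through as written.

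The paper also localizes the monodromy via the freedom in $b(\theta)$, but it does this \emph{twice}: one choice of $b$ (constant on one half-circle) makes $W_+$ literally coincide with the $W_+$ of $\BO(V,\Id)$, so Example~\ref{ex:trivial_BO} applies verbatim; a \emph{different} choice of $b$ (constant on the other half-circle) does the same for $W_-$. Since different admissible $b$'s yield isotopic open-book contact forms and hence isotopic $\Circle$-invariant structures, the $W_\pm$ pieces are each homotopic to $V\times\disk^*\Circle$ without ever having to run the $K\to 0$ deformation in the presence of the twisting term. Your ``main obstacle'' simply evaporates.

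By contrast, you fix a single $b$ and then try to push the $K\to 0$ deformation through on the twisted side. Your stated reason that this works, namely that $d\eta=0$ so the twist contributes ``only a closed perturbation to the primitive,'' is not correct: the relevant term in $\beta_+$ is $b(\theta)\eta/\Phi_2$, and $d\big(b(\theta)\eta\big)=b'(\theta)\,d\theta\wedge\eta\neq 0$, so it genuinely alters $d\beta_+$. Verifying non-degeneracy along the whole family then requires an honest computation you have not supplied. The cleanest fix is exactly the paper's: change $b$ again so that the side under consideration carries no twist, and invoke Example~\ref{ex:trivial_BO}.
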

\begin{proof}
As in \Cref{ex:trivial_BO}, we have $f=\alpha_{\mathrm{BO}}(\partial_y)=\Phi_2$. Now notice that in \eqref{eqn:contact_OB}, $\lambda+b(\theta)+ K \rd \theta$ on $\Sigma_{\phi}$ for non-decreasing $g(t)$ such that $b(\theta)=0$ when $\theta \in [0,\pi]$ and $b=1$ near $2\pi$ defines a contact structure isotopic to \eqref{eqn:contact_OB}. The construction in \Cref{thm:bourgeois} works for such contact forms on open books as well, yielding isotopic $\Circle$-invariant contact structures. Such modification does not change the function $f$ and dividing set $\Gamma$, and after such change, the $W_+$ piece of such modification is identified with the $W_+$ piece of $\BO(V,\Id)$. In other words, up to homotopy of ideal Liouville domains, the $W_+$ piece is $V \times \disk^*\Circle$. Similarly, we can arrange $b(\theta)=1$ for $\theta\in [\pi,2\pi]$ and $b=0$ near $0$ to argue that the $W_-$ piece up to homotopy is also $V \times \disk^*\Circle$.

To explain the gluing map, we use $b(\theta)$ such that $b(\theta)=0$ when $\theta\in [0,\pi]$ and $b=1$ near $1$. Then the dividing set looking from the $W_+$ side is
\begin{equation*}
    \partial(V \times \disk^*\Circle_x)=\left(\left(V \times \{(-1,0)_{a,b}\} \cup \Sigma \times \{(1,0)_{a,b}\}\cup \partial V \times  [-1,1]_a \times \{0\}_b \right)\times \Circle_x, \lambda+a\rd x)\right).
\end{equation*}
Now we have $\psi: (\partial(V \times \disk^*\Circle), \beta_{V}+p\rd q) \to  (\partial(V \times \disk^*\Circle), \alpha)$ defined by $\phi^{-1}$ on $V \times \{(1,0)\}\times \Circle $ and $\Id$ elsewhere. Here $\alpha$ is $\phi^*\beta_{V}+p\rd q=\beta_{V}+\eta+p\rd q$ on $V \times \{(1,0)\}\times \Circle $ and $\lambda+p\rd q$ elsewhere. Therefore $\BO(V,\phi)=\DG(V \times \disk^*\Circle,W_-,\psi)$, where $W_-$ is an ideal Liouville filling of the ``non-standard" boundary $(\partial(V \times \disk^*\Circle), \alpha)$ homotopic to $V \times \disk^*\Circle$. To make $V_-$ standard,  $\psi_{\BO}$ is the composition of $\psi$ with the contact isotopy induced from tuning down the $\eta$-component on $V \times \{(1,0)\}\times \Circle$.
\end{proof}

\section{Holomorphic foliations on $\completion{W} = \completion{V} \times \completion{\Sigma}$}\label{Sec:Foliation}

Here through \S \ref{Sec:VanishingAug} we study augmentations associated to Liouville manifolds whose completions are of the form
\begin{equation*}
\left(\completion{W}, \beta_{W}\right) = \left(\completion{V} \times \completion{\Sigma}, \beta_{V} + \beta_{\Sigma}\right)
\end{equation*}
where $(V, \beta_{V})$ is an arbitrary Liouville domain and $(\Sigma, \beta_{\Sigma})$ is a non-simply connected Liouville domain of $\dim \Sigma = 2$. In this section we endow such $\completion{W}$ with codimension-$2$ holomorphic foliations having special properties described in Lemma \ref{Lemma:FoliationProperties}, below. To start we set up some notation.

\subsection{Prerequisites}

The following classes of almost complex structures are used to define symplectic field theory invariants of contact and Liouville manifolds in \cite{BH-cylindrical,BH}.

\begin{defn}\label{def:tame}
Let $\alpha$ be a contact form for a contact manifold $(Y^{2n+1}, \xi)$ with Reeb field $R$ and let $s$ be a coordinate on $I\subset \R$.
An almost complex structure $J$ on an $I$-symplectization $I\times Y$ is \emph{$\alpha$-tame} if
\begin{enumerate}
    \item $J$ is invariant under translation in the $s$-coordinate,
    \item $J(\partial_s)=F_J R$ for some $F_J\in \Cinfty(Y,(0,\infty))$,
    \item there is a $2n$-plane field $\xi_J \subset TY$ satisfying $J\xi_J=\xi_J$, and
    \item $\rd \alpha(V, JV)>0$ for all non-zero $V\in \xi_J$.
\end{enumerate}
\end{defn}

In general, $\alpha$ will be nontrivial on $\xi_J$ so that this hyperplane field will not necessarily agree with $\xi$. Nevertheless, $J$ will be tamed by $\rd (e^{\delta s }\alpha)$ for $\delta \ll 1$, see \Cref{lemma:tame}. This notion of almost complex structures will simplifies the asymptotic analysis for punctured holomorphic curves asymptotic to Reeb orbits as well as the construction of holomorphic foliations. The relevant SFT compactness theorem for $\alpha$-tame almost structures can be found in \cite[\S 3.4]{BH-cylindrical}.

\begin{lemma}\label{lemma:tame}
    Let $\alpha$ be a contact form, we fix a  $2n$-plane field $\xi_J\subset TY$ that is transverse to the Reeb vector field $R$. We fix an almost complex structure $J$ on $\xi_J$ such that $\rd \alpha(V, JV)>0$ for all non-zero $V\in \xi_J$. Then there exists $C>0$, as long as $J(\partial_s)=F_JR$ for $F_J\ge C$ on $I\times Y$, we have $J$ is tamed by with $\rd(e^s\alpha)$.
\end{lemma}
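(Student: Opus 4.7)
The plan is to compute $\rd(e^s\alpha)(X, JX)$ directly at a point and identify a threshold on $F_J$ above which this is positive for every nonzero $X$. From $J^2 = -\Id$ and $J\partial_s = F_J R$ one deduces $JR = -F_J^{-1}\partial_s$, so the extension of $J$ from $\xi_J$ to $T(I \times Y) = \R\partial_s \oplus \R R \oplus \xi_J$ is completely determined. For $X = a\partial_s + bR + V$ with $V \in \xi_J$, this yields $JX = aF_J R - bF_J^{-1}\partial_s + JV$.

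Writing $\rd(e^s\alpha) = e^s(\rd s \wedge \alpha + \rd\alpha)$, the $\rd\alpha$ piece reduces to $\rd\alpha(V, JV)$, because $\partial_s$ and $R$ both lie in the kernel of $\rd\alpha$. The $\rd s \wedge \alpha$ piece produces the diagonal terms $a^2 F_J$ and $b^2 F_J^{-1}$ together with two cross terms, $a\alpha(JV)$ and $bF_J^{-1}\alpha(V)$. These cross terms are the only obstruction to positivity and appear precisely because $\alpha$ need not vanish on $\xi_J$.

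The approach is to absorb the cross terms via AM--GM: $|a\alpha(JV)| \le \tfrac{1}{2}(a^2 F_J + F_J^{-1}\alpha(JV)^2)$ and similarly $|bF_J^{-1}\alpha(V)| \le \tfrac{1}{2}(b^2 F_J^{-1} + F_J^{-1}\alpha(V)^2)$. Since $\rd\alpha(\cdot, J\cdot)$ is positive definite on $\xi_J$ by hypothesis, it defines a pointwise norm $|V|^2 := \rd\alpha(V, JV)$, and the linear functionals $\alpha(\cdot)$ and $\alpha(J\cdot)$ on $\xi_J$ are bounded in this norm by some continuous function $c \ge 0$ on $Y$ depending on $\alpha$, $J$, and $\xi_J$. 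Combining the estimates gives
\[
\rd(e^s\alpha)(X, JX) \ge e^s\left(\tfrac{1}{2}a^2 F_J + \tfrac{1}{2}b^2 F_J^{-1} + |V|^2\bigl(1 - c^2 F_J^{-1}\bigr)\right),
\]
which is strictly positive for $X \ne 0$ as soon as $F_J > c^2$. Any $C$ strictly exceeding the pointwise supremum of $c^2$ then works.

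The main obstacle is producing a single $C$ valid across all of $Y$: the function $c$ is pointwise in nature, and uniform boundedness is automatic when $Y$ is compact and follows from appropriate uniformity of the data otherwise. Note that the resulting $C$ can be taken independent of $s$ because $\alpha$, $J|_{\xi_J}$, and $\xi_J$ are, and that only the positivity of the conformal factor $e^s$ enters the estimate, so the same argument yields tameness by $\rd(h(s)\alpha)$ for any positive $h$.
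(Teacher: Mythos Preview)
Your proof is correct and follows essentially the same approach as the paper's: expand $\rd(e^s\alpha)(X,JX)$ directly, identify the cross terms arising from $\alpha|_{\xi_J}\neq 0$, and absorb them using a bound of the form $\alpha(JV)^2 \le M\,\rd\alpha(V,JV)$ together with AM--GM. Your computation is in fact slightly more complete than the paper's, since you include the second cross term $bF_J^{-1}\alpha(V)$ (which the paper's displayed formula omits) and you explicitly flag the compactness of $Y$ needed to pass from a pointwise bound to a uniform constant $C$.
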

\begin{proof}
    For $U=a\partial_s+bR$ and $V\in \xi_J$, it suffices to prove
    $$\rd(e^s\alpha)(U+V,JU+JV)=a^2F_J+\frac{b^2}{F_J}+a\alpha(JV)+\rd\alpha(V,JV)>0$$
    if $(U,V)\ne 0$. Since $\alpha(JV)^2 \le M \rd \alpha(V,JV)$ for some $M>0$,  if $F_J>M/2$, we have $\rd(e^s\alpha)(U+V,JU+JV)>0$ if $(U,V)\ne 0$.
\end{proof}

Let $(V,\beta_V)$ be a Liouville domain and $\widehat{V}=V\cup (1,\infty)_\sigma \times \partial V$, where $\sigma$ is coordinate on $\widehat{V}$ outside the skeleton of $V$, such that $\sigma \partial_{\sigma}$ is the Liouville vector field and $\log(\sigma)$ is the cylindrical coordinate used in \Cref{def:tame}. We will refer to $\sigma$ as the exponential cylindrical coordinate. We introduce the following notation: For $I$ an interval contained in $\R_{\ge 0}$, which could be closed, open, or open at one end and closed at the other, 
$$V^I:=I_{\sigma}\times \partial V \subset \completion{V}, \text{ if }0\notin I, \quad V^I:=\widehat{V}\backslash \left((\R_+\backslash I)_{\sigma}\times \partial V\right) \subset \completion{V}, \text{ if }0\in I.$$

Now let $(V, \beta_{V})$ and $(\Sigma, \beta_{\Sigma})$ be compact Liouville domains with $\dim \Sigma = 2$. We use $\sigma,p$ as the exponential cylindrical coordinates on $\completion{V},\completion{\Sigma}$ respectively. Here and throughout, 
\begin{enumerate}
    \item $\epsilon$ is a small positive constant;
    \item $Y$ is contact boundary of $V$, with contact form $\alpha_Y=\beta_V|_{Y}$, contact structure $\xi_Y=\ker \alpha_Y$ and Reeb vector field $R_Y$;
    \item $\Circle_q=[0,D]_q/0\sim D$ is a component of the contact boundary of $\Sigma$, with contact form $\rd q$ and Reeb vector field $\partial_q$. Hence $\beta_\Sigma$ in the collar of $\Circle_q$ is $p\rd q$ for $p\in (0,\infty)$.
\end{enumerate}
Liouville vector fields for $\beta_{V}$ and $\beta_{\Sigma}$ will be written as $X_{V}$ and $X_{\Sigma}$, respectively, which outside the skeleton, i.e.\ where the exponential cylindrical coordinate is defined,  take the form
\begin{equation*}
X_{V} = \sigma\partial_{\sigma}, \quad X_{\Sigma} = p\partial_{p}.
\end{equation*}

We write $W^{\square} = V \times \Sigma$ which we equip with the Liouville form $\beta_{W} = \beta_{V} + \beta_{\Sigma}$ and Liouville vector field $X_{W} = X_{V} + X_{\Sigma}$. For a function $f$ on $U = V, \Sigma$, or $W$, the associated Hamiltonian vector field with respect to $d\beta_{U}$ will be written $\fX^{U}_{f}$ and is defined by the convention that $df = d\beta_{U}(\ast, \fX^{U}_{f})$. Define the completion $\completion{W} = \completion{V} \times \completion{\Sigma}$ which we equip with the Liouville form $\beta_{W}$ and which contains $W^{\square}$.

The goal of this section is to prove the following lemma.

\begin{lemma}\label{Lemma:FoliationProperties}
There is a contact hypersurface $\divSet \subset W^{\square}$, such that $X_{W}\pitchfork \divSet$, and a $d\beta_{W}$-tame\footnote{In the usual sense, i.e.\ $\rd \beta_W(U,J_WU)>0$ if $U\ne 0$.} almost complex structure $J_{W}$ on $\completion{W}$ such that $J_{W}$ is $\alpha_{\divSet}=\beta_{W}|_{\divSet}$-tame in the sense of \Cref{def:tame} on a half-cylindrical end $[1, \infty)_{s} \times \divSet$ of $\completion{W}$
and $(\completion{W}, J_{W})$ has a holomorphic foliation $\foliation$ with the following properties:
\be
\item The leaves are parameterized by $\completion{\Sigma}$ by a diffeomorphism
$\completion{V}\times \completion{\Sigma}\to \completion{W}, (v,z)\mapsto \leafInclusion_z(v)$, when $v\in V^{\le \epsilon-\delta}$ for a $0<\delta \ll \epsilon$, we have the projection of $\leafInclusion_z(v)$ to $\completion{\Sigma}$ is $z$.
\item Along $(\epsilon, \infty)_{\sigma} \times Y \times \Sigma \cup \{(\sigma,p)|\sigma \ge \epsilon p, p\ge 1\}\times Y \times \partial \Sigma $, the tangent space of the leaves agrees with the distribution $\langle J_{W}R_{Y}, R_{Y}\rangle \oplus \xi_{Y}$. 
\ee
\end{lemma}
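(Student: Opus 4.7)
The plan is to construct $\divSet$, $J_W$, and $\foliation$ as interpolations of product structures, exploiting the decomposition $\completion{W} = \completion{V} \times \completion{\Sigma}$. First I would fix a $d\beta_V$-tame almost complex structure $J_V$ on $\completion{V}$ which is $\alpha_Y$-tame on the cylindrical end, normalized so that $J_V X_V = F_V R_Y$ (and hence $J_V R_Y = -X_V/F_V$) there, and analogously a $\rd q$-tame $J_\Sigma$ on $\completion{\Sigma}$.

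Next I would take $\divSet$ to be a smooth level set $\{\rho = 1\}$ of a function $\rho : \completion{W} \to [0,\infty)$ satisfying $X_W \rho > 0$, with $\rho = \sigma$ when $\sigma \gg p$ and $\rho = p$ when $p \gg \sigma$. The Liouville flow of $X_W$ then identifies $\{\rho \ge 1\}$ with a cylindrical end $[1,\infty)_s \times \divSet$ of $\completion{W}$. For $J_W$: on the region $\{\sigma \le \epsilon - \delta\} \cup \{p \le \epsilon - \delta\}$ and on the central piece $\{\rho \le 1 - c\}$, take the product $J_W = J_V \oplus J_\Sigma$. In the cylindrical end of $\divSet$, insist that $J_W$ preserves the horizontal subbundle $T\completion{V}$ and restricts to $J_V$ there; the prescription $J_W X_W = F R_\divSet$ for large positive $F$ then forces $J_W X_\Sigma = F R_\divSet - F_V R_Y$, and $J_W$ on $R_{\partial \Sigma}$ is pinned down by $J_W^2 = -\mathrm{Id}$ (a short algebraic check confirms consistency). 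Applying \Cref{lemma:tame} with an appropriate $J_W$-invariant plane field $\xi_J \subset T\divSet$ yields $d\beta_W$-taming once $F$ is taken large enough, while the $\alpha_\divSet$-taming is built into the construction.

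With these choices, $\foliation$ would be the product foliation $\leafInclusion_z(v) := (v,z)$. Since $J_W$ preserves $T\completion{V}$ by construction, each leaf $\completion{V} \times \{z\}$ is $J_W$-holomorphic. Condition (1) is immediate from the formula for $\leafInclusion_z$. For condition (2), in the indicated regions the tangent space to the leaf at $(v,z)$ is $T_v \completion{V} = \xi_Y \oplus \langle R_Y, \partial_\sigma \rangle$, which equals $\xi_Y \oplus \langle R_Y, J_W R_Y \rangle$ because $J_W R_Y = J_V R_Y \in \langle \partial_\sigma \rangle$.

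The principal obstacle is arranging $J_W$ so that $d\beta_W$-taming on all of $\completion{W}$, $\alpha_\divSet$-taming in the cylindrical end, and preservation of the horizontal subbundle $T\completion{V}$ hold simultaneously across the ``mixed corner'' where $\sigma \sim 1$ and $p \sim 1$, since there $R_\divSet$ is a nontrivial combination of $R_Y$ and $\partial_q$ depending on the geometry of $\divSet$. I expect to handle this by stratifying the cylindrical end according to which of $\sigma, p$ dominates, keeping $J_W = J_V \oplus J_\Sigma$ wherever one variable dominates so that $J_W X_W$ is automatically nearly parallel to $R_\divSet$, and exploiting the large constant $F$ from \Cref{lemma:tame} to absorb the interpolation error and verify the positivity conditions in a thin transition neighborhood.
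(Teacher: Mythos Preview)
Your plan contains a genuine gap: the product foliation $\leafInclusion_z(v)=(v,z)$ together with the prescription $J_W|_{T\completion{V}}=J_V$ (a fixed, $z$-independent almost complex structure) is incompatible with condition~(1) of Definition~\ref{def:tame}, namely $s$-translation invariance of $J_W$ on the half-cylinder $[1,\infty)_s\times\divSet$. The $s$-translation is the Liouville flow of $X_W=X_V+X_\Sigma$, which acts diagonally, so invariance of $J_W$ together with $J_W|_{T\completion{V}}=J_V$ forces $(\Flow^t_{X_V})_*J_V=J_V$ on the entire $V$-projection of the cylindrical end. But that projection is all of $\completion{V}$ (for any $v$ there is some $z$ with $(v,z)$ in the end), and a $d\beta_V$-tame $J_V$ cannot be globally $X_V$-invariant unless $V$ is essentially trivial. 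This is not a ``mixed corner'' issue; it already fails over $\divSet_V\cap\{v\in V^{\le\epsilon}\}$ away from the skeleton, where your two prescriptions (product $J_V\oplus J_\Sigma$ versus $J_W X_W=F R_\divSet=F\partial_q$) also disagree, since $J_VX_V\ne 0$ there.

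The paper resolves this by abandoning the naive product foliation on the end. The leaf through $z$ is defined as $\leafInclusion_z(v)=(\Flow_{X_V}^{\log(b_\Sigma(z)/\epsilon)}(v),z)$ for $v\in V^{\le\epsilon}$, with $b_\Sigma$ scaling like $\epsilon p$ at infinity, and is then continued outward by integrating $-J_WR_Y$, which has a nontrivial $T\Sigma$ component (Lemma~\ref{Lemma:JRY}). Correspondingly $J_W$ restricted to the ``core'' of each leaf is the $z$-dependent rescaling $(\Flow_{X_V}^{\log(b_\Sigma(z)/\epsilon)})_*J_{V^{\le\epsilon}}$, which is exactly what makes $J_W$ invariant under the $X_W$-flow. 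Your approach can be salvaged by allowing $J_W|_{T\completion{V}}$ to depend on $z$ through such a rescaling, but once you do that you are essentially reproducing the paper's construction, and the leaves will no longer be the flat slices $\completion{V}\times\{z\}$ outside the rescaled core.
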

In proving the lemma, some specific properties of the Reeb vector field $R_{\divSet}$ will be described. We will also later use the specific description of $J_{W}$ to control the asymptotics of the $\leaf_{z}:=\im \leafInclusion_z$. Both will be important for our contact homology computations. Using the induced $\alpha_{\divSet}$ tame almost complex structure $J_{\divSet}$ on the symplectization $\R \times \divSet$, we will see in \Cref{lemma:foliation_symp} that this symplectization also admits a holomorphic foliation, which can be viewed as the positive asymptotic of the foliation on $\completion{W}$.

\subsection{The contact hypersurface $\divSet$ and its symplectization}

We define a contact hypersurface $\divSet$ in $W^{\square}$ as the union of the graphs of two functions $f_{V}$ and $f_{\Sigma}$. We will then define $W$ to be the subset of $W^{\square}$ bound by $\divSet$ and see the symplectization of $\divSet$ as a collar of $\completion{W}$. Along the way, we will describe Reeb dynamics on $\divSet$, equipped with the contact form it inherits from $\beta_{W}$ on $W^{\square}$.

\subsubsection{The hypersurface $\divSet_{V}$}

Let $f_{V}: V^{\le 1-2\epsilon} \rightarrow [2\epsilon, 1]$ be such that
\be
\item $f_{V} = 1$ on $V^{\le \epsilon}$, and
\item in $V^{[\epsilon,1-2\epsilon]}$, $f_{V} = f_{V}(\sigma)$ with $\frac{\partial f_{V}}{\partial \sigma} \leq 0$ and $f_{V}(\sigma) = 1-\sigma$ along $\{\sigma \in [2\epsilon, 1-2\epsilon]\}$.
\ee
Using $f_{V}$ we define a map $\Phi^{V}$ by
\begin{equation*}
\begin{gathered}
\Phi^{V}: [1, \infty)_{s} \times V^{\le 1-2\epsilon}_{v} \times \Circle_{q} \rightarrow \completion{V} \times [2\epsilon, \infty)_{p} \times \Circle_{q} \subset \completion{W}\\
\Phi^{V}(s, v, q) = \Flow^{\log(s)}_{X_{W}}\left(v, f_{V}, q\right) = \left(\Flow^{\log(s)}_{X_{V}}v, sf_{V}, q\right),\\
\implies (\Phi^{V})^{\ast}\beta_{W} = s\left(f_{V}\rd q + \beta_{V}\right).
\end{gathered}
\end{equation*}
We define a hypersurface $\divSet_{V} \subset W$ by $\Phi^{V}(\{s=1\})$. It follows that the Reeb vector field $R_{\divSet}$ on $\divSet_{V}$ takes the form
\begin{equation*}
R_{\divSet} = \left(f_{V} - \beta_{V}(\fX_{f_{V}}^{V})\right)^{-1}\left( \partial_{q} - \fX^{V}_{f_{V}}\right).
\end{equation*}

\subsubsection{The hypersurface $\divSet_{\Sigma}$}\label{Sec:divSetSigmaDef}
Let $f_{\Sigma}: \Sigma^{\le 1-2\epsilon} \rightarrow [2\epsilon, 1]$ be such that
\be
\item $f_{\Sigma}$ is $\Cinfty$ close to $1$ on $\Sigma^{\le \epsilon}$, and
\item on $\Sigma^{[\epsilon,1-2\epsilon]}$, $f_{\Sigma} = f_{\Sigma}(p)$ with $\frac{\partial f_{\Sigma}}{\partial p} < 0$ and $f_{\Sigma}(p) = 1-p$ along $\{ p\in [2\epsilon, 1-2\epsilon]\}$.
\ee
Using $f_{\Sigma}$ we define a map $\Phi^{\Sigma}$ and hypersurface $\divSet_{\Sigma}$ by
\begin{equation*}
\begin{gathered}
\Phi^{\Sigma}:[1, \infty)_{s} \times Y_{y} \times \Sigma^{\le 1-2\epsilon}_{z} \rightarrow [2\epsilon, \infty)_{\sigma} \times Y \times \completion{\Sigma} \subset \completion{W},\\
\Phi^{\Sigma}(s, y, z) = \Flow^{\log(s)}_{X_{W}}\left(f_{\Sigma}, y, z\right) = \left(sf_{\Sigma}, y, \Flow^{\log(s)}_{X_{\Sigma}}z\right),\\
\implies (\Phi^{\Sigma})^{\ast}\beta_{W} = s\left(f_{\Sigma}\alpha_{Y} + \beta_{\Sigma}\right).
\end{gathered}
\end{equation*}
Similarly, we define  $\divSet_{\Sigma} = \Phi^{\Sigma}(\{s=1\})$. It follows that the Reeb vector field on $\divSet_{\Sigma}$ takes the form
\begin{equation}\label{Eq:ReebMSigma}
R_{\divSet} = \left(f_{\Sigma} - \beta_{\Sigma}(\fX_{f_{\Sigma}}^{\Sigma})\right)^{-1}\left( R_{Y} - \fX^{\Sigma}_{f_{\Sigma}}\right)
\end{equation}
Along $\Phi^{\Sigma}\left(\{s=1\}\times Y\times \Sigma^{[\epsilon,1-2\epsilon]}\right)$, this specializes to
\begin{equation*}
\fX_{f_{V}}^{V} = \frac{\partial f_{\Sigma}}{\partial p}\partial_{q}, \quad R_{\divSet} = \left(f_{\Sigma} - p\frac{\partial f_{\Sigma}}{\partial p}\right)^{-1}\left(R_{Y} - \frac{\partial f_{\Sigma}}{\partial p}\partial_{q}\right)
\end{equation*}
Restricting further to the subset $\Phi^{\Sigma}\left(\{s=1\}\times Y\times \Sigma^{[2\epsilon,1-2\epsilon]}\right)$ we get
\begin{equation*}
X_{f_{V}}^{V} = -\partial_{q}, \quad R_{\divSet} = R_{Y} + \partial_{q}
\end{equation*}

\subsubsection{The overlap}

The $\divSet_{V}$ and $\divSet_{\Sigma}$ overlap along the set
\begin{equation*}
\divSet_{o} = \{ p = 1-\sigma :\  \sigma \in (2\epsilon, 1-2\epsilon)\} \subset V^{\ge 2\epsilon} \times \Sigma^{\ge 2\epsilon}
\end{equation*}
along which
\begin{equation*}
 \quad R_{\divSet} = \partial_{q} + R_{Y}
\end{equation*}

We can therefore define $\divSet = \divSet_{V} \cup \divSet_{\Sigma}\subset W^{\square}$ to be the union of these pieces equipped with the contact forms $\alpha_{\divSet} = \beta_{W}|_{\divSet}$. Then define
\begin{equation*}
\Phi: (1, \infty)_{s} \times \divSet \rightarrow W \quad  \implies \quad \Phi^{\ast}\beta_{W} = s\alpha_{\divSet}
\end{equation*}
as the ``union'' of the $\Phi^{V}$ and $\Phi^{\Sigma}$, i.e.\ $\Flow^{\log(s)}_{X_{W}}$ from $\Gamma$. We can set $W \subset W^{\square}$ to be the complement of image of $(0, \infty)\times \divSet$ under $\Phi$. So the image of $\Phi$ gives a half-cylindrical end of the completion of $W$ in $\completion{W}$.

The following lemma gives us a quick summary of the dynamics of $R_{\divSet}$ and follows immediately from the fact that $R_{\divSet}$ has positive $\partial_{q}$ coefficient outside $\Gamma_{\Sigma^{\le \epsilon}}:=\Phi^V(\{s=1\} \times Y \times \Sigma^{\le \epsilon})$. More analysis will be provided in \S \ref{Sec:DetailedDynamics}.

\begin{lemma}\label{Lemma:ContractibleOrbitLocation}
When $\Sigma$ is not a disk, for any choices of $f_{V}, f_{\Sigma}$ as above, the contact hypersurface $\divSet$ is such that all contractible orbits of $R_{\divSet}$ are contained in the subset $\Gamma_{\Sigma^{\le \epsilon}}$.
\end{lemma}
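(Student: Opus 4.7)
My plan is to exploit the positive $\partial_q$-coefficient of $R_{\divSet}$ outside $\Gamma_{\Sigma^{\le \epsilon}}$ together with the topology of $\Sigma$: any closed Reeb orbit not entirely contained in $\Gamma_{\Sigma^{\le \epsilon}}$ must project to a loop in $\Sigma$ winding positively around a boundary circle $\Circle_q$, and such a loop is non-contractible when $\Sigma$ is not a disk.

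First I would verify that $\divSet \setminus \Gamma_{\Sigma^{\le \epsilon}}$ is Reeb-invariant, so that any orbit with a point outside $\Gamma_{\Sigma^{\le \epsilon}}$ stays outside. On $\divSet_V$ this is automatic: the Reeb vector field lies in $\langle \partial_q, R_Y\rangle$, so $\sigma$ (and hence $p = f_V(\sigma) \in [2\epsilon, 1]$) is constant along orbits, placing $\divSet_V$ entirely outside $\Gamma_{\Sigma^{\le \epsilon}}$. On $\divSet_\Sigma$, the Hamiltonian vector field $\fX^\Sigma_{f_\Sigma}$ preserves $f_\Sigma$, so $f_\Sigma \circ z(t)$ is constant along orbits; choosing $f_\Sigma$ on $\Sigma^{\le \epsilon}$ to exceed $f_\Sigma(\epsilon)$ on the interior (a mild refinement of the ``$\Cinfty$-close to $1$'' condition, always achievable) makes $\{p = \epsilon\}$ a Reeb-invariant level set cleanly separating $\Gamma_{\Sigma^{\le \epsilon}}$ from its complement. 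In the cylindrical region $\Sigma^{[\epsilon, 1-2\epsilon]}$ this specializes further to $p$ itself being Reeb-invariant.

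Next I would use the explicit formulas for $R_{\divSet}$ developed in Section~\ref{Sec:Bourgeois} to extract the winding. A closed orbit $\gamma \subset \divSet \setminus \Gamma_{\Sigma^{\le \epsilon}}$ has $p$ constant at some $p_0 \ge \epsilon$, so it lies in the cylindrical collar of $\Circle_q \subset \partial\Sigma$. The $\partial_q$-coefficient of $R_{\divSet}$ is strictly positive on $\divSet_V$ (coefficient $(f_V - \sigma f_V')^{-1}$), on $\divSet_\Sigma|_{\Sigma^{[\epsilon, 1-2\epsilon]}}$ (coefficient $-f_\Sigma'(p)/(f_\Sigma - p f_\Sigma') > 0$ since $f_\Sigma' < 0$), and on the overlap $\divSet_o$. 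Hence the projection of $\gamma$ via $\divSet \subset V \times \Sigma \twoheadrightarrow \Sigma$ is a loop lying in $\{p = p_0\}$ and winding positively around $\Circle_q$, representing a positive power of $[\Circle_q] \in \pi_1(\Sigma)$. Since $\Sigma$ is a compact surface with boundary other than a disk, $\pi_1(\Sigma)$ is a non-trivial free group and $[\Circle_q]$ is non-trivial in it (a free generator in the multi-boundary case, a product of commutators $\prod[a_i,b_i]$ in the one-boundary positive-genus case); free groups being torsion-free, no positive power of $[\Circle_q]$ is trivial. Thus $\gamma$ is non-contractible in $\Sigma$, hence in $W^\square = V \times \Sigma$, hence in $\divSet \subset W^\square$.

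The main obstacle is the Reeb-invariance assertion of the first step: the setup as written allows $f_\Sigma|_{\Sigma^{\le \epsilon}}$ to be any function $\Cinfty$-close to $1$, so I would need to either tighten this condition to guarantee the level-set structure described above, or argue that the statement holds under the weaker condition by using that the $\partial_q$-coefficient degenerates smoothly at $\{p = \epsilon\}$. Once invariance is in hand, everything else is a direct computation with the formulas for $R_{\divSet}$ together with the standard fact that $\pi_1$ of a non-disk compact surface with boundary is non-trivially free.
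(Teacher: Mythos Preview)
Your approach is correct and matches the paper's one-line justification (that $R_{\divSet}$ has positive $\partial_q$-coefficient outside $\Gamma_{\Sigma^{\le \epsilon}}$). The obstacle you flag in the final paragraph is not genuine: since $f_\Sigma$ is smooth and depends only on $p$ on $\Sigma^{[\epsilon, 1-2\epsilon]}$, the circle $\{p=\epsilon\}$ is automatically Reeb-invariant (smoothness forces $\partial_q f_\Sigma|_{p=\epsilon}=0$, so the $\partial_p$-component of $\fX^\Sigma_{f_\Sigma}$ vanishes there), and hence no refinement of $f_\Sigma|_{\Sigma^{\le \epsilon}}$ is needed.
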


\subsection{Definition of $J_{W}$ along $\im\Phi$}

Now we define an $\alpha_{\divSet}$-tame almost complex structure $J_{W}$ on the half-cylindrical end $\im \Phi$ of $\completion{W}$. We specify $J_{W}\partial_{s}$ as
\begin{equation}\label{eqn:J_liouville}
J_{W}s\partial_{s} =C \cdot \begin{cases}  \partial_{q} - \fX^{V}_{f_{V}} & \text{along}\ \im\Phi^{V}\\
R_{Y} - \fX^{\Sigma}_{f_{\Sigma}} & \text{along}\ \im\Phi^{\Sigma}\\
R_{Y} + \partial_{q} & \text{along the overlap}.
\end{cases}
\end{equation}
In other words, $J_Ws\partial_s=CfR_{\divSet}$ for $f:\divSet \to \R_{>0}$, where $f=f_V-\beta_V(\fX^V_{f_V})$ on $\divSet_V$, $f=f_\Sigma-\beta_\Sigma(\fX^\Sigma_{f_\Sigma})$ on $\divSet_{\Sigma}$ and $f=1$ on the overlap $\Gamma_o$. Here $C>0$ is a constant, which will be specified later to make sure \Cref{lemma:tame} applies.

\subsubsection{$J_{W}$ along $\im \Phi^{V}\backslash \im \Phi^{\Sigma}$} Note that $\im \Phi^{V}\backslash \im \Phi^{\Sigma}$ is $\Phi^{V}([1,\infty)_s\times V^{\le 2\epsilon}\times \Circle_q)$. Along $\Phi^{V}([1, \infty)_{s} \times V^{\le \epsilon} \times \Circle_q)$ the above expression yields $J_{W}s\partial_{s} = \partial_{q}$. It only remains to determine $J_{W}|_{TV}$ and we declare that
\begin{equation*}
J_{W}|_{TV} = J_{V}
\end{equation*}
where $J_{V}$ is a $d\completion{\beta}_{V}$-compatible almost complex structure on $\completion{V}$, which preserves the contact structure $\xi_Y$ and sends $\sigma \partial_{\sigma}$ to $R_Y$ near the boundary of $V^{\le \epsilon}$ and $\R_+$-invariant (w.r.t.\ the dilation action as we are using the exponential cylindrical coordinate) on $\completion{V}\backslash V^{\le \epsilon}$. We write $J_{\xi_Y}$ for the restriction of $J_V$ to $\xi_Y$ near the boundary.

Along $\Phi^V([1,\infty)_s\times V^{[\epsilon,2\epsilon]}\times \Circle_q)$, the hypersurface $\divSet_{V}$ is the graph of $p=f_{V}(\sigma)$ for $\sigma\in [\epsilon,2\epsilon]$. There we compute $J_{W}$ on $\Phi^V([1,\infty)_s\times V^{[\epsilon,2\epsilon]}\times \Circle_q)$ as
\begin{equation}\label{Eq:JMV}
\begin{gathered}
X_{W} = s\partial_{s} = \sigma \partial_{\sigma} + p\partial_{p}, \quad T\divSet_{V} =\langle R_Y \rangle \oplus  \xi_Y \oplus \langle \partial_q, \partial_{\sigma}+\frac{\rd f_V}{\rd \sigma} \partial_p \rangle\\
J_{W}|_{\xi_Y}=J_{\xi_Y},\quad J_W(\sigma \partial_{\sigma}+\sigma \frac{\rd f_V}{\rd \sigma} \partial_p ) = R_Y, \quad  J_{W}X_{W} = C\left(\partial_{q}-\fX^V_{f_V}\right)=C\left(\partial_{q}-\frac{\rd f_V}{\rd \sigma} R_Y\right),\\
\implies J_{W}\partial_{p} = \left(f_V-\sigma \frac{\rd f_V}{\rd \sigma}\right)^{-1}\left(C\partial_{q} - \left(1+C\frac{\rd f_V}{\rd \sigma}\right)R_Y\right).
\end{gathered}
\end{equation}
The discussion can be extended to $2\epsilon \le \sigma \le 1-2\epsilon$, which simplifies to
\begin{equation}\label{eqn:J_sigma_p_V}
    J_{W}\partial_{p} = C\partial_{q}+(C-1)R_Y, \quad J_{W}\partial_{\sigma} = R_Y/\sigma+ C\partial_q+(C-1)R_Y, \quad  J_W(\partial_{\sigma}-\partial_p)=R_Y/\sigma.
\end{equation}
However we will need to change $J_W$ in the region $2\epsilon \le \sigma \le 1-2\epsilon$, spelling out the formula will help us to see the smoothness of $J_W$ of the final construction.

We define $\xi_{J_W}$ to be $TV$ for $\sigma \le \epsilon$ and  $\xi_Y\oplus \langle \partial_{\sigma}+\frac{\rd f_V}{\rd \sigma} \partial_p, R_Y \rangle$ when $\epsilon \le \sigma \le 2\epsilon$. This decomposition is compatible with the complex structure. Again the definition extends to $2\epsilon \le \sigma \le 1-2\epsilon$, however, the actual definition of $\xi_{J_W}$ when  $2\epsilon \le \sigma \le 1-2\epsilon$ will be modified later. We write $\eta =  \langle \partial_{\sigma}+\frac{\rd f_V}{\rd \sigma} \partial_p, R_Y \rangle$ for $\epsilon \le \sigma \le 2\epsilon$, which also extends to $2\epsilon \le \sigma \le 1-2\epsilon$.

\begin{prop}\label{prop:compatible_V}
    $J_W$ preserves $\xi_{J_W}$ on $\divSet_V\backslash \divSet_{\Sigma}$. $\rd\alpha_{\divSet},J_W$ are compatible on $\xi_{J_W}$ over $\divSet_V\backslash \divSet_{\Sigma}$. In particular, $J_W$ is $\alpha_{\divSet}$-tame on $\divSet_V\backslash \divSet_{\Sigma}$.
\end{prop}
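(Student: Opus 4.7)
The plan is to verify the three assertions on the two subregions of $\divSet_V\setminus \divSet_\Sigma$ separately. That region splits as the inner piece $\Phi^V([1,\infty)_s\times V^{\le \epsilon}\times \Circle_q)$, on which $f_V\equiv 1$ and $J_W|_{TV}=J_V$, and the transition piece $\Phi^V([1,\infty)_s\times V^{[\epsilon,2\epsilon]}\times \Circle_q)$, on which the formulas in \eqref{Eq:JMV} govern $J_W$. Smoothness across $\sigma=\epsilon$ is automatic, since $f_V'(\epsilon)=0$ makes $e_1:=\partial_\sigma+f_V'(\sigma)\partial_p$ reduce to $\partial_\sigma$ and $\eta$ reduce to the $\langle \partial_\sigma, R_Y\rangle$ part of $TV$.

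On the inner region, $\xi_{J_W}=TV$ is trivially preserved by $J_W=J_V$, and since $f_V$ is constant we have $\alpha_{\divSet}=\rd q+\beta_V$, so $\rd\alpha_{\divSet}|_{TV}=\rd\beta_V|_{TV}$ is compatible with $J_V$ by the construction of $J_V$. On the transition region, write $\xi_{J_W}=\xi_Y\oplus \eta$ with $\eta=\langle e_1,R_Y\rangle$. Preservation of $\xi_Y$ is built into the choice $J_W|_{\xi_Y}=J_{\xi_Y}$; the relation $J_W(\sigma e_1)=R_Y$ from \eqref{Eq:JMV} yields $J_W e_1=R_Y/\sigma$ and hence $J_W R_Y=-\sigma e_1$, so $\eta$ is preserved as well.

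To check compatibility, I would compute $\rd\alpha_{\divSet}=f_V'(\sigma)\,\rd\sigma\wedge \rd q+\rd\beta_V$ with $\rd\beta_V=\rd\sigma\wedge \alpha_Y+\sigma\,\rd\alpha_Y$. The off-diagonal vanishing $\rd\alpha_{\divSet}(\xi_Y,\eta)=0$ then follows by direct inspection, using $\alpha_Y(\xi_Y)=0$, $\rd\sigma(\xi_Y)=\rd q(\xi_Y)=0$, and the Reeb identity $\rd\alpha_Y(\cdot,R_Y)=0$. On $\xi_Y$ only the $\sigma\,\rd\alpha_Y$ summand survives, and $\sigma\,\rd\alpha_Y|_{\xi_Y}$ is compatible with $J_{\xi_Y}$ by the choice of $J_V$. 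On $\eta$ the single nonzero pairing is $\rd\alpha_{\divSet}(e_1,R_Y)=1$ (contributed entirely by $\rd\sigma\wedge \alpha_Y$), whence a direct $2\times 2$ computation gives
\begin{equation*}
\rd\alpha_{\divSet}\bigl(ae_1+bR_Y,\,J_W(ae_1+bR_Y)\bigr)=\tfrac{a^2}{\sigma}+b^2\sigma>0\quad\text{for }(a,b)\ne (0,0),
\end{equation*}
and $J_W$-invariance of the pairing on $\eta$ follows from the same identification.

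For the ``in particular'' tameness conclusion in the sense of Definition \ref{def:tame}, conditions (1) and (2) are visible from the construction: $J_W$ on the half-cylindrical end is transported by the Liouville flow used to define $\Phi$, so it is $s$-invariant, and \eqref{eqn:J_liouville} gives $J_W(s\partial_s)=CfR_{\divSet}$ with $f>0$. Condition (3) is the preservation of $\xi_{J_W}$, and (4) is the positivity half of the compatibility just established. The only bookkeeping subtlety I anticipate is verifying $\fX^V_{f_V}=f_V'(\sigma)R_Y$ under the convention $\rd f=\rd\beta_U(\cdot,\fX^U_f)$ (so that the formula $J_Ws\partial_s = C(\partial_q-\fX^V_{f_V})$ correctly matches $CfR_{\divSet}$ with the expression for $R_{\divSet}$ in \S \ref{Sec:divSetSigmaDef}); once this sign is pinned down, every remaining step is a direct local-coordinate computation.
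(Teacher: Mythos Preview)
Your proposal is correct and follows essentially the same approach as the paper's own proof: split $\divSet_V\setminus\divSet_\Sigma$ into the inner region $\sigma\le\epsilon$ and the transition region $\epsilon\le\sigma\le 2\epsilon$, use $J_W|_{TV}=J_V$ on the first and the formulas \eqref{Eq:JMV} on the second, and check compatibility on $\xi_Y$ and $\eta$ separately with the off-diagonal pairing vanishing. Your treatment is in fact slightly more thorough than the paper's, which only records $\rd\alpha_\divSet(e_1,J_We_1)=1/\sigma$ on $\eta$ whereas you carry out the full $2\times 2$ check; and your explicit verification of the four conditions in Definition~\ref{def:tame} makes the ``in particular'' clause cleaner than in the original.
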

\begin{proof}
It is clear from the construction that $J_W$ preserves $\xi_{J_W}$. Since $\alpha_\divSet=\beta_V+\rd q$ and $\xi_{J_W}=TV$ when $\sigma \le \epsilon$, $J_W$ is compatible with $\rd \alpha_{\divSet}$ on $\xi_{J_W}$ as $J_V$ is compatible with $\rd \beta_V$ on $TV$.  

When $\epsilon \le \sigma \le 2\epsilon$, we have $\alpha_\divSet = \sigma \alpha_Y +p\rd q$ restricted to $p=f_{V}(\sigma)$. Therefore 
$$\rd \alpha_{\divSet} = \rd \sigma \wedge \alpha_Y +\sigma \rd \alpha_Y + \rd p\wedge \rd q.$$
Since $J_W$ preserves $\xi_Y$, and $\rd\alpha_{\divSet}|_{\xi_Y}=\sigma \rd \alpha_Y$, we have $J_W$ is compatible with $\rd \alpha_{\divSet}$ on $\xi_Y$ as $J_{\xi_Y}$ is compatible with $\rd \alpha_Y$ on $\xi_Y$. Then we can compute 
$$\rd \alpha_{\divSet} \left(\partial_{\sigma}+\frac{\rd f_V}{\rd \sigma}\partial_p, J_W\left(\partial_{\sigma}+\frac{\rd f_V}{\rd \sigma}\partial_p\right)\right)=1/\sigma>0$$
Hence $J_W$ is compatible with $\rd \alpha_{\divSet}$ on $\eta$. Finally, for $v\in \xi_Y$ and $u\in \eta$, we have $\rd \alpha_{\divSet}(v,u)=0$. Hence $\rd\alpha_{\divSet},J_W$ are compatible on $\xi_{J_W}$ over $\divSet_V\backslash \divSet_{\Sigma}$.
\end{proof}

\subsubsection{$J_{W}$ along $\im \Phi^{\Sigma}\backslash \im \Phi^{V}$}
Note that $\im \Phi^{\Sigma}\backslash \im \Phi^{V}$ is $\Phi^{\Sigma}([1,\infty)_s\times Y \times \Sigma^{\le 2\epsilon})$. So to complete the definition of $J_{W}$, note that we can identify $\divSet_{\Sigma}\cap (\im \Phi^{\Sigma}\backslash \im \Phi^{V})$ with the graph of $\sigma=f_{\Sigma}$ and
$$X_{W} = s\partial_{s} = \sigma\partial_{\sigma} + X_{\Sigma}, \quad  T\divSet_{\Sigma} = \langle R_Y \rangle \oplus \xi_Y \oplus \{ \zeta + \rd f_{\Sigma}(\zeta)\partial_{\sigma}|\zeta \in T\Sigma\},$$
we fix a $d\beta_{\Sigma}$-compatible almost complex structure $J_{\Sigma}$ for which $J_{\Sigma}p\partial_{p} = \partial_{q}$ near the boundary of $\Sigma^{\le \epsilon}$ and $\R_+$-invariant on $\completion{\Sigma}\backslash \Sigma^{\le 2\epsilon}$. 
We define 
\begin{equation}\label{Eq:JAlongMSigma}
\begin{gathered}
J_{W}|_{\xi_{Y}} = J_{\xi_Y},\\
J_{W}\left(\zeta + \rd f_{\Sigma}(\zeta)\partial_{\sigma}\right) = J_{\Sigma}\zeta + \rd f_{\Sigma}(J_{\Sigma}\zeta)\partial_{\sigma}, \quad \zeta \in T\Sigma\\
J_{W}X_{W} = C(R_{Y} - \fX^{\Sigma}_{f_{\Sigma}})
\end{gathered}
\end{equation}
Along the set $\epsilon \le p \le  2\epsilon$ where $f_{\Sigma} = f_{\Sigma}(p)$ and $\fX^{\Sigma}_{f_\Sigma}=\frac{\rd f_{\Sigma}}{\rd p}\partial_q$, this simplifies to
$$J_W \partial_\sigma = \left(f_{\Sigma}-p\frac{\rd f_{\Sigma}}{\rd p}\right)^{-1}\left(CR_Y-\left(1+C\frac{\rd f_{\Sigma}}{\rd p}\right)\partial_q\right)$$
The discussion can be extended to $2\epsilon \le p \le 1-2\epsilon$, which simplifies to
\begin{equation}\label{eqn:J_sigma_p_Sigma}
 J_W(\partial_{\sigma}-\partial_p)=-\partial_q/p.
\end{equation}
We define $\xi_{J_W}$ to be  $\xi_Y \oplus \{ \zeta + \rd f_{\Sigma}(\zeta)\partial_{\sigma}|\zeta \in T\Sigma\}$ and write $\eta=\{ \zeta + \rd f_{\Sigma}(\zeta)\partial_{\sigma}|\zeta \in T\Sigma\}$ on $\divSet_\Sigma\backslash \divSet_{V}$, which is $\langle \partial_q, \partial_p+\frac{\rd f_\Sigma}{\rd p}\partial_\sigma  \rangle $ on $\epsilon\le p \le 2\epsilon$. The decomposition $\xi_{J_W}$ to be  $\xi_Y \oplus \eta$ is compatible with the complex structure. 

\begin{prop}
    $J_W$ preserves $\xi_{J_W}$ on $\divSet_\Sigma\backslash \divSet_{V}$, and $\rd\alpha_{\divSet},J_W$ are compatible on $\xi_{J_W}$ over $\divSet_{\Sigma}\backslash \divSet_{V}$. In particular, $J_W$ is $\alpha_\divSet$-tame on $\divSet_\Sigma\backslash \divSet_{V}$.
\end{prop}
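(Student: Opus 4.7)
The plan is to mirror the argument of \Cref{prop:compatible_V} but on the $\Sigma$-side. Preservation of $\xi_{J_W}=\xi_Y\oplus\eta$ on $\divSet_\Sigma\setminus\divSet_V$ is immediate from the defining formulas in \eqref{Eq:JAlongMSigma}: $J_W|_{\xi_Y}=J_{\xi_Y}$ preserves $\xi_Y$ by construction, and the prescription $J_W(\zeta+\rd f_\Sigma(\zeta)\partial_\sigma)=J_\Sigma\zeta+\rd f_\Sigma(J_\Sigma\zeta)\partial_\sigma$ manifestly preserves $\eta$.

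For the positivity of $\rd\alpha_\divSet(\cdot,J_W\cdot)$ on $\xi_{J_W}$, I would first compute the contact form on $\divSet_\Sigma\setminus\divSet_V$ explicitly. Since this region lies in $\{\sigma\ge 2\epsilon\}$ where $\beta_V=\sigma\alpha_Y$, and since $\sigma=f_\Sigma$ along $\divSet_\Sigma$, pulling back $\beta_W$ gives $\alpha_\divSet=f_\Sigma\alpha_Y+\beta_\Sigma$, whence $\rd\alpha_\divSet=\rd f_\Sigma\wedge\alpha_Y+f_\Sigma\rd\alpha_Y+\rd\beta_\Sigma$. On $\xi_Y\subset TV$, the terms $\rd f_\Sigma$ and $\rd\beta_\Sigma$ vanish since they are pulled back from $\completion{\Sigma}$, leaving $\rd\alpha_\divSet|_{\xi_Y}=f_\Sigma\rd\alpha_Y|_{\xi_Y}$; positivity $\rd\alpha_\divSet(v,J_Wv)=f_\Sigma\rd\alpha_Y(v,J_{\xi_Y}v)>0$ for nonzero $v\in\xi_Y$ follows from $f_\Sigma>0$ together with $\rd\alpha_Y$-compatibility of $J_{\xi_Y}$. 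For $u_i=\zeta_i+\rd f_\Sigma(\zeta_i)\partial_\sigma\in\eta$, the $TY$-component vanishes, so $\alpha_Y(u_i)=0$ and $\rd\alpha_Y(u_1,u_2)=0$; consequently $\rd\alpha_\divSet(u_1,u_2)=\rd\beta_\Sigma(\zeta_1,\zeta_2)$, and positivity on $\eta$ follows from $\rd\beta_\Sigma$-compatibility of $J_\Sigma$ since $\zeta\mapsto\zeta+\rd f_\Sigma(\zeta)\partial_\sigma$ is an isomorphism from $T\Sigma$ onto $\eta$ intertwining $J_\Sigma$ with $J_W|_\eta$. The cross terms $\rd\alpha_\divSet(v,u)$ for $v\in\xi_Y$ and $u\in\eta$ vanish by the same bidegree considerations: $\rd f_\Sigma\wedge\alpha_Y$ pairs $T\Sigma\oplus\langle\partial_\sigma\rangle$ against $TY$ but $u$ has no $TY$ component, $f_\Sigma\rd\alpha_Y$ sees only $TY$ but $u$ has none, and $\rd\beta_\Sigma$ sees only $T\Sigma$ but $v$ has none. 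This upgrades summand-wise positivity to positivity on all of $\xi_{J_W}=\xi_Y\oplus\eta$.

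For the ``in particular'' clause, once compatibility on $\xi_{J_W}$ is in hand, the remaining items of \Cref{def:tame} are essentially built in: invariance under the Liouville flow holds because the defining data in \eqref{eqn:J_liouville} and \eqref{Eq:JAlongMSigma} are independent of the symplectization coordinate, \eqref{eqn:J_liouville} gives $J_Ws\partial_s=CfR_\divSet$ with $f>0$, and $\xi_{J_W}$ is the requisite $J$-invariant hyperplane field. I do not foresee any serious obstacle. The main subtlety is purely bookkeeping: tracking which summands of $\rd\alpha_\divSet$ act trivially on $\xi_Y$ versus $\eta$, using that $\xi_Y\subset TV$ whereas $\eta\subset T\Sigma\oplus\langle\partial_\sigma\rangle$, and the fact that on $\divSet_\Sigma$ we have $\sigma=f_\Sigma$ so that the $V$-cylindrical coordinate and the function defining the hypersurface are coupled.
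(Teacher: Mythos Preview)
Your proposal is correct and follows essentially the same approach as the paper's own proof: compute $\rd\alpha_\divSet$ from $\alpha_\divSet=f_\Sigma\alpha_Y+\beta_\Sigma$ (the paper writes this equivalently as $\sigma\alpha_Y+\beta_\Sigma$ restricted to $\sigma=f_\Sigma$), then verify positivity on $\xi_Y$ via $f_\Sigma\rd\alpha_Y$, on $\eta$ via $\rd\beta_\Sigma$, and vanishing of cross terms by the same bidegree reasoning. Your treatment is slightly more detailed on the cross terms but otherwise identical.
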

\begin{proof}
It is clear from the construction that $J_W$ preserves $\xi_{J_W}$. Note that we have $\alpha_\divSet = \sigma \alpha_Y +\beta_\Sigma$ restricted to $\sigma=f_{\Sigma}$. Therefore 
$$\rd \alpha_{\divSet} = \rd \sigma \wedge \alpha_Y +\sigma \rd \alpha_Y + \rd \beta_\Sigma.$$
Since $J_W$ preserves $\xi_Y$,  we have $J_W$ is compatible with $\rd \alpha_{\divSet}$ on $\xi_Y$. Then for $\zeta\ne 0 \in T\Sigma$, we can compute 
$$\rd \alpha_{\divSet} \left(\zeta + \rd f_{\Sigma}(\zeta)\partial_{\sigma}, J_W\left(\zeta + \rd f_{\Sigma}(\zeta)\partial_{\sigma}\right)\right)=\rd \beta_\Sigma(\zeta,J_{\Sigma}\zeta)>0.$$
Hence $J_W$ is compatible with $\rd \alpha_{\divSet}$ on $\eta$. Finally, for $v\in \xi_Y$ and $u\in \eta$, we have $\rd \alpha_{\divSet}(v,u)=0$. Hence $\rd\alpha_{\divSet},J_W$ are compatible on $\xi_{J_W}$ over $\divSet_{\Sigma}\backslash \divSet_{V}$.
\end{proof}

\subsubsection{$J_{W}$ along the overlap $\divSet_o=\divSet_V\cap \divSet_{\Sigma}$}
To complete the definition of $J_{W}$, it suffices to do so along the overlapping region $\divSet_{o} = \{ p=1-\sigma: \sigma \in (2\epsilon, 1-2\epsilon)\}$. Here the contact structure is given by 
$$\ker(\alpha_{\divSet}) = \xi_{Y} \oplus \langle \partial_{\sigma}-\partial_p, pR_Y-(1-p)\partial_q\rangle.$$
Recall that, with the previous constructions, we have for $\delta \ll 1$, 
$$\eta = \langle \partial_\sigma - \partial_p, R_Y\rangle, \quad \sigma \in [2\epsilon,2\epsilon+\delta);$$
$$\eta = \langle \partial_\sigma - \partial_p, \partial_q\rangle, \quad p \in [2\epsilon,2\epsilon+\delta).$$
Now we choose a smooth function $b:[0,1]_{\sigma}\to [0,1]$, such that 
\begin{enumerate}
    \item $b=0$ for $\sigma\le 2\epsilon+\delta$, and $b=1$ for $\sigma \ge 1-2\epsilon-\delta$;
    \item $\frac{\rd b}{\rd \sigma }>0$ on $(2\epsilon+\delta,1-2\epsilon-\delta)$;
    \item $b(1-\sigma)=1-b(\sigma)$.
\end{enumerate}
Then we define $\eta$  on the overlap $\Gamma_o$ by
$$\langle \partial_\sigma -\partial_p, (1-b(\sigma))R_Y-b(\sigma)\partial_q \rangle,$$
and $\xi_{J_W}:=\xi_Y\oplus \eta$.
Therefore $\eta$ can be patched together smoothly, so does $\xi_{J_W}$.
To define $J_W$ on $\Gamma_0$, we require 
$$ J_{W}s\partial_{s}= R_Y+\partial_q, \quad J_W|_{\xi_Y} = J_{\xi_Y}, J(\partial_\sigma -\partial_p)=(1-b(\sigma))/\sigma \cdot R_Y-b(\sigma)/p \cdot \partial_q.$$
Then by \eqref{eqn:J_sigma_p_V} and \eqref{eqn:J_sigma_p_Sigma}, $J_W$ is smoothly defined on $\im \Phi$. 
\begin{prop}
    On $\divSet_o$, we have $J_W$ preserves $\xi_{J_W}$ and $\rd\alpha_{\divSet},J_W$ are compatible on $\xi_{J_W}$. In particular, $J_W$ is $\alpha_{\divSet}$-tame on $\divSet_o$.
\end{prop}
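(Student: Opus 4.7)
The plan is to verify the three assertions---$J_W$-invariance of $\xi_{J_W}$, compatibility of $d\alpha_\divSet$ with $J_W$ on $\xi_{J_W}$, and $\alpha_\divSet$-tameness---in a manner parallel to the two preceding propositions, with the additional bookkeeping needed for the interpolation by $b(\sigma)$.

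First I would establish that $J_W$ preserves $\xi_{J_W} = \xi_Y \oplus \eta$. Invariance of $\xi_Y$ is immediate from $J_W|_{\xi_Y} = J_{\xi_Y}$. For $\eta$, the key point is that the formula defining $J_W(\partial_\sigma - \partial_p)$ on $\divSet_o$ is chosen precisely so that its image lies in $\eta$: it interpolates smoothly between $R_Y/\sigma$ at $b = 0$ and $-\partial_q/p$ at $b = 1$, matching the adjacent constructions on $\divSet_V \setminus \divSet_\Sigma$ and $\divSet_\Sigma \setminus \divSet_V$. Invariance of $\eta$ on its other generator then follows from $J_W^2 = -\mathrm{Id}$. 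Equivalently, one can view $\eta$ as a $J_W$-invariant complement to $\langle s\partial_s, J_W s\partial_s\rangle \oplus \xi_Y$ inside $T\completion{W}|_{\divSet_o}$.

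Next I would verify the compatibility of $d\alpha_\divSet$ with $J_W$ on $\xi_{J_W}$. Restricting $\alpha_\divSet = \sigma\alpha_Y + (1-\sigma)\,dq$ to $\divSet_o$ yields $d\alpha_\divSet = d\sigma \wedge (\alpha_Y - dq) + \sigma\,d\alpha_Y$. On $\xi_Y$, this becomes $\sigma\,d\alpha_Y|_{\xi_Y}$, positive against $J_{\xi_Y}$ by the choice of $J_V$. The subspaces $\xi_Y$ and $\eta$ are $d\alpha_\divSet$-orthogonal, because $d\sigma$, $dq$, $\alpha_Y$, and $\iota_{R_Y} d\alpha_Y$ all vanish on $\xi_Y$. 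Evaluating $d\alpha_\divSet(\partial_\sigma - \partial_p, J_W(\partial_\sigma - \partial_p))$ then yields the positive combination $(1-b(\sigma))/\sigma + b(\sigma)/p$, which is strictly positive across the entire overlap since $b$ and $1-b$ are never simultaneously zero.

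With $J_W$-invariance of $\xi_{J_W}$ and strict positivity of $d\alpha_\divSet(V, J_W V)$ on $\xi_{J_W}$, $\alpha_\divSet$-tameness follows from \Cref{def:tame}: the $s$-translation invariance is built into the construction on $\divSet$, and $J_W s\partial_s = R_Y + \partial_q$ is a positive multiple of $R_\divSet$ along the overlap. The main obstacle I anticipate is the first step: ensuring that the interpolation $b(\sigma)$ is chosen so that $\eta$ is both smoothly defined across $\sigma = 2\epsilon + \delta$ and $\sigma = 1 - 2\epsilon - \delta$ and genuinely $J_W$-invariant throughout the interior of the overlap, rather than only at its endpoints. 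Once this geometric compatibility is in place, the remainder of the proof reduces to routine algebra together with the positivity computation above.
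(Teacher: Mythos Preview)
Your proposal is correct and follows essentially the same approach as the paper: compute $\rd\alpha_{\divSet}$ on the overlap, verify compatibility on $\xi_Y$ and on $\eta$ separately via the key positivity $(1-b(\sigma))/\sigma + b(\sigma)/p > 0$, and use $\rd\alpha_{\divSet}$-orthogonality of $\xi_Y$ and $\eta$ to conclude. Your anticipated obstacle concerning $J_W$-invariance of $\eta$ is not a genuine issue: by construction $J_W(\partial_\sigma - \partial_p)$ lies in $\langle R_Y, \partial_q\rangle$, and $(1-b(\sigma))R_Y - b(\sigma)\partial_q$ is the second generator of $\eta$, so invariance holds for every $\sigma$ in the overlap and not only at the endpoints.
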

\begin{proof}
On $\divSet_o$, we have $\alpha_\divSet = \sigma \alpha_Y +p\rd q$ restricted to $p=1-\sigma$. Therefore 
$$\rd \alpha_{\divSet} = \rd \sigma \wedge \alpha_Y +\sigma \rd \alpha_Y + \rd p\wedge \rd q.$$
Since $J_W$ preserves $\xi_Y$, it is clear that $J_W$ is compatible with $\rd \alpha_{\divSet}$ on $\xi_Y$. Then we can compute 
$$\rd \alpha_{\divSet} \left(\partial_{\sigma}-\partial_p, J_W\left(\partial_{\sigma}-\partial_p\right)\right)=(1-b(\sigma))/\sigma+b(\sigma)/p>0$$
Hence $J_W$ is compatible with $\rd \alpha_{\divSet}$ on $\eta$. Finally, for $v\in \xi_Y$ and $u\in \eta$, we have $\rd \alpha_{\divSet}(v,u)=0$. Hence $\rd\alpha_{\divSet},J_W$ are compatible on $\xi_{J_W}$ over $\divSet_o$.
\end{proof}

So far, we have built $J_W$ on the positive cylindrical end $\im \Phi$ that is invariant under the $\R_+$ action, and $J_W|_{\xi_{J_W}}$ is independent of $C$.  It is clear that we can extend it to an almost complex structure on $\completion{\divSet}$ with the following properties.
\begin{prop}
For any $C>0$, we define $J_{\divSet}$ to be the almost complex structure on $\completion{\divSet}$ by extending $J_W$ above from the positive cylindrical end to the whole symplectization $\completion{\divSet}$ that is $\R_+$-invariant, then $J_{\divSet}$ is $\alpha_\divSet$-tame in the sense of \Cref{def:tame}, with $\xi_{J_{\divSet}}=\xi_{J_W}$.
\end{prop}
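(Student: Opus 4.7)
The plan is to verify the four conditions of Definition~\ref{def:tame} for $J_\divSet$ by combining the piecewise tameness results of the three preceding propositions with the $\R_+$-homogeneity built into the construction, and then noting that none of these conditions are disturbed by the $\R_+$-invariant extension off of $\im\Phi$.

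First, observe that every ingredient used to define $J_W$ on the positive cylindrical end $\im\Phi \cong (1,\infty)_s \times \divSet$ depends only on data on $\divSet$: the specification $J_W s\partial_s = Cf R_\divSet$, the splitting $\xi_{J_W} = \xi_Y \oplus \eta$, and $J_W|_{\xi_{J_W}}$ (including $J_{\xi_Y}$, $J_V|_{\xi_Y}$, and $J_\Sigma$ near the boundary) are pointwise specifications in $\divSet$-coordinates, independent of $s$. Consequently $J_W$ is invariant under the dilation flow of $s\partial_s$ on $\im \Phi$, and the prescription of extending $\R_+$-invariantly to $\completion{\divSet} \cong \R_{>0} \times \divSet$ produces a well-defined smooth almost complex structure $J_\divSet$. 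Passing to the cylindrical coordinate $t=\log s$, dilation becomes translation in $t$, so condition (1) of Definition~\ref{def:tame} holds. Moreover $J_\divSet(\partial_t) = J_\divSet(s\partial_s) = Cf R_\divSet$ with $f: \divSet \to \R_{>0}$ as prescribed in \eqref{eqn:J_liouville}, so condition (2) holds with $F_{J_\divSet} = Cf > 0$; this is valid for any $C>0$.

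Conditions (3) and (4) require a $J_\divSet$-invariant hyperplane distribution $\xi_{J_\divSet}$ transverse to the Reeb direction on which $\rd\alpha_\divSet$ tames $J_\divSet$. Take $\xi_{J_\divSet} = \xi_{J_W}$ (this is also what the statement asserts). The three preceding propositions have already established, on each of the pieces $\divSet_V \setminus \divSet_\Sigma$, $\divSet_\Sigma \setminus \divSet_V$, and the overlap $\divSet_o$, that $J_W$ preserves $\xi_{J_W} = \xi_Y \oplus \eta$ and that $\rd\alpha_\divSet(v, J_W v) > 0$ for non-zero $v \in \xi_{J_W}$. Since these conditions are pointwise on $\divSet$ and the extension of $J_W$ to $\completion{\divSet}$ is $\R_+$-invariant, they propagate unchanged to $J_\divSet$ on the full symplectization.

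The main obstacle is ensuring that the piecewise definitions assemble into a globally smooth splitting $\xi_{J_W} = \xi_Y \oplus \eta$ and a globally smooth $J_W$ across the transition strips $\sigma \in [2\epsilon, 2\epsilon + \delta]$ and $p \in [2\epsilon, 2\epsilon + \delta]$, where $\eta$ bends from its $\divSet_V$-form $\langle \partial_\sigma + (\rd f_V/\rd\sigma)\partial_p, R_Y\rangle$ into the interpolated form $\langle \partial_\sigma - \partial_p, (1-b(\sigma))R_Y - b(\sigma)\partial_q\rangle$ on $\divSet_o$, and analogously on the $\divSet_\Sigma$ side. This smoothness is engineered into the construction via the matching identities \eqref{eqn:J_sigma_p_V} and \eqref{eqn:J_sigma_p_Sigma} together with the symmetric bump function $b$ satisfying $b(1-\sigma)=1-b(\sigma)$; once the transitions are checked explicitly in these strips, all four conditions of Definition~\ref{def:tame} are verified for $J_\divSet$ on $\completion{\divSet}$, completing the proof.
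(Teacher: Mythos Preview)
Your proposal is correct and follows the same approach as the paper: the paper treats this proposition as an immediate consequence of the construction and the three preceding propositions (stating just before it that ``It is clear that we can extend it''), and you have simply made explicit the verification of the four conditions of Definition~\ref{def:tame} from that data. Your observation that conditions (3) and (4) are independent of $C$ (since $J_W|_{\xi_{J_W}}$ does not involve $C$, as the paper also notes) correctly explains why the statement holds for any $C>0$ rather than only $C\gg 0$.
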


As a warm-up, we can establish the following holomorphic foliation on $\completion{\divSet}$, a special case when $\Sigma = [-1,1]_p\times \R_q$ was constructed in \cite[\S 5]{Avdek:Hypersurface}.
\begin{lemma}\label{lemma:foliation_symp}
    $(\completion{\divSet},J_{\divSet})$ has a holomorphic foliation with the following leaves:
    \begin{enumerate}
        \item $\completion{V}$-leaves, if we project the part in $(0,\infty)_s\times (\divSet\backslash \divSet_{V})$ to $(\divSet\backslash \divSet_{V})$ then to $\Sigma$ induced from the projection from $\completion{V}\times \completion{\Sigma}\to \completion{\Sigma}$ \footnote{The composition is not the projection $\completion{\divSet}\subset \completion{V}\times \completion{\Sigma}\to \completion{\Sigma}$!}, we get gradient flows of $f_{\Sigma}$ converging to critical points of $f_{\Sigma}$, such a leaf is biholomorphic to $(\completion{V},J_V)$.
        \item $\completion{Y}$-leaves, which are completely contained in  $(0,\infty)_s \times \divSet_{\Sigma}$ and whose image under the projection to $\divSet_{\Sigma}$ then to $\Sigma$ are gradient flows of $f$ with both ends converging to critical points of $f_{\Sigma}$ (could rest on a critical point), such a leaf is biholomorphic to  $(\completion{Y}, J_Y)$, where $J_Y$ is the induced complex structure on $Y=\partial V$ from $J_V$.
    \end{enumerate}
    Such a foliation is denoted by $\foliation_{\divSet}$.  The $s$-translation sending levels of $\foliation_{\divSet}$ to leaves, preserving the leave type.
\end{lemma}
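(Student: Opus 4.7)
My plan is to construct the leaves of $\foliation_{\divSet}$ explicitly as $Y$-families over parametrized curves $(s(t), \gamma(t))$ in $(0,\infty)_s \times \completion{\Sigma}$, where $\gamma$ is a gradient trajectory of $f_{\Sigma}$ and the $s$-dependence is forced by $J_{\divSet}$-holomorphicity. This generalizes the case $\Sigma = \disk^{*}\R$ handled in \cite[\S 5]{Avdek:Hypersurface}.

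First I would extend $f_{\Sigma}$ from $\Sigma^{\le 1-2\epsilon}$ to all of $\completion{\Sigma}$ by continuing $f_{\Sigma} = 1-p$ throughout the cylindrical end, after perturbing $f_{\Sigma}|_{\Sigma^{\le \epsilon}}$ to be Morse. Gradient trajectories $\gamma$ with respect to the $J_{\Sigma}$-compatible metric $\rd \beta_{\Sigma}(\cdot, J_{\Sigma}\cdot)$, parametrized so $\dot\gamma = \nabla f_{\Sigma}$, split into (a) bounded trajectories between critical points of $f_{\Sigma}$ in $\Sigma^{\le \epsilon}$ (allowing constant trajectories resting at a critical point), and (b) unbounded trajectories flowing from a critical point at $t \to +\infty$ out to $p \to +\infty$ at $t \to -\infty$. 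For each such $\gamma$ and each $s_{0} > 0$ I would declare the candidate leaf: on the $\divSet_{\Sigma}$-portion,
\begin{equation*}
  \leaf_{\gamma, s_0}^{\Sigma} = \{(s_{0} e^{t/C}, y, \gamma(t)) : y \in Y,\ t \in \dom(\gamma)\},
\end{equation*}
and for unbounded $\gamma$ the leaf is extended across the overlap $\divSet_{o}$ into the $\divSet_{V}$-region as $V^{\le 1-2\epsilon} \times \{(f_{V}(\sigma), q_{0})\} \times \{s_{1}\}$ at appropriate constants $(q_{0}, s_{1})$. The exponential $s$-dependence is forced because the identity $J_{\divSet} R_{Y} = -\tfrac{1}{C}s\partial_{s} - \widehat{\nabla f_{\Sigma}}$ on the $\divSet_{\Sigma}$-side places $J_{\divSet} R_{Y}$ in the candidate tangent $\xi_{Y} \oplus \langle R_{Y},\ s'(t)\partial_{s} + \widehat{\dot\gamma}\rangle$ exactly when $s'(t) = s(t)/C$.

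The remaining steps are: verify $J_{\divSet}$-invariance of the leaf tangent in each region, using the identity above on $\divSet_{\Sigma} \setminus \divSet_{o}$ and the $J_{\divSet}$-invariant splitting $\xi_{Y} \oplus \langle R_{Y},\ \partial_{\sigma} + f_{V}' \partial_{p}\rangle$ coming from \eqref{Eq:JMV} on $\divSet_{V} \setminus \divSet_{o}$; check smooth gluing across the overlap $\divSet_{o}$, where the parametrizations $\Phi^{V}$ and $\Phi^{\Sigma}$ are known to agree and $J_{\divSet}$ interpolates via the bump function $b(\sigma)$; confirm disjointness and coverage, which follow because the gradient trajectories foliate $\completion{\Sigma}$ and $s_{0}$ supplies the extra parameter; and identify the biholomorphism with $(\completion{Y}, J_{Y})$ (type (a)) or $(\completion{V}, J_{V})$ (type (b)) through the explicit parametrization, for type (b) using the $V^{\le 1-2\epsilon}$-slice on the $\divSet_{V}$-side glued to the $\divSet_{\Sigma}$-cylindrical piece. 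The $s$-translation equivariance $\leaf_{\gamma,\, e^{a} s_{0}} = (\text{shift by } a)\cdot \leaf_{\gamma, s_{0}}$ is then immediate from the construction.

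The main obstacle will be the smooth matching of the $\completion{V}$-leaves across the overlap $\divSet_{o}$ into the $\divSet_{V}$-region. There $J_{\divSet}$ depends on the interpolation function $b(\sigma)$ and on both local coordinate systems, and one must verify that gluing the $\divSet_{\Sigma}$-side exponentially parametrized cylinder to the $\divSet_{V}$-side $V$-slice produces a smooth embedded holomorphic submanifold biholomorphic to $(\completion{V}, J_{V})$. The compatibility $\Phi^{V}(s,v,q) = \Phi^{\Sigma}(s,y,z)$ for matching $(v,q) \leftrightarrow (y,z)$ on $\divSet_{o}$ should make this a careful bookkeeping computation rather than a conceptual difficulty.
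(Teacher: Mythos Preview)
Your approach is essentially the same as the paper's: both build the leaves as $\xi_Y \oplus \langle R_Y, J_\divSet R_Y\rangle$-integral submanifolds and identify the $\Sigma$-projection with $\nabla f_\Sigma$-flow. The paper starts from the $\divSet_V$ side (the slices $\Phi^V(\{s\}\times V^{\le\epsilon}\times\{q\})$ are visibly holomorphic) and then simply integrates the single ODE $\gamma' = -J_W R_Y$ outward through $\divSet_o$ into $\divSet_\Sigma$, checking afterward that the two successive projections land on $\nabla f_\Sigma$. You instead start from the $\divSet_\Sigma$ side with explicit $(s_0 e^{t/C}, y, \gamma(t))$-parametrizations and glue inward. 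Both are fine; the paper's version is shorter because one ODE handles all three regions at once.

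One concrete correction: your description of the $\completion V$-leaf on the $\divSet_V$ side as the constant-$s$ slice $V^{\le 1-2\epsilon}\times\{q_0\}\times\{s_1\}$ is only valid on $V^{\le 2\epsilon}$, i.e.\ on $\divSet_V\setminus\divSet_o$. On the overlap $\divSet_o$ one has $J_\divSet s\partial_s = C(R_Y+\partial_q)$, so $J_\divSet R_Y$ acquires a nonzero $\partial_s$-component as soon as $b(\sigma)>0$, and the leaf must bend in $s$ there to match your exponential $s(t)=s_0 e^{t/C}$ on the $\divSet_\Sigma$ side. You already flag the overlap matching as the main remaining work, so this will surface when you do it; but your candidate formula over the full $V^{\le 1-2\epsilon}$ is incorrect as stated and cannot glue to the $\divSet_\Sigma$-side parametrization. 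The cleanest fix is the paper's: drop the piecewise ansatz on $\divSet_o$ and just integrate $-J_\divSet R_Y$ from the $V^{\le 2\epsilon}$ boundary outward.
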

\begin{proof}
    Over $(0,\infty)_s\times \divSet_{V^{\le \epsilon}}$, $\Phi^V(\{s\} \times V^{\le \epsilon} \times \{q\})$ is clearly $J_W$ holomorphic. Over $\Phi^V((0,\infty)_s \times V^{[\epsilon,2\epsilon]} \times \{q\})$, we can compute that
    $$J_W R_Y=-\sigma \partial_{\sigma}-\sigma \frac{\rd f_V}{\rd \sigma} \partial_p$$
    In particular, 
    $$Y\times I\to \completion{\divSet}, \quad (y,t)\mapsto (y,\gamma(t))$$
    has a $J_W$-holomorphic image, where $\gamma:I\to (0,\infty)_\sigma \times (0,\infty)_p$ solves the differential equation $\gamma'=-J_WR_Y= \sigma \partial_{\sigma}+\sigma \frac{\rd f_V}{\rd \sigma}\partial_p$. It is clear that those leaves patch with  $\Phi^V(\{s\} \times V^{\le \epsilon} \times \{q\})$ as $f_V=1$ on $V^{\le \epsilon}$ smoothly. Since $s\partial_s = \sigma \partial_\sigma+p\partial_p$, the projection by forgetting the $s$-cooridnate leaves $-J_WR_Y$ with $-p\partial_p+\sigma \frac{\rd f_V}{\rd \sigma}\partial_p$. As $-p+\sigma \frac{\rd f_V}{\rd \sigma}<0$, the projection to $\Sigma$ are lines in the $-\partial_p$ direction.

    Finally, over the symplectization of $\divSet_{\Sigma}$, we have 
    $$J_W R_Y=-s/C\partial_s+J\fX^{\Sigma}_{f_{\Sigma}}=-s/C\partial_s-\rd \beta_V (\fX^\Sigma_{f_{\Sigma}},J_{\Sigma}\fX^\Sigma_{f_{\Sigma}})\partial_{\sigma}+J_{\Sigma}\fX^{\Sigma}_{f_{\Sigma}}.$$
    The first projection forgets $\partial_s$ and the next map forgets $\partial_{\sigma}$, therefore the map to $\Sigma$ maps $-J_WR_Y$ to $-J_{\Sigma}\fX^{\Sigma}_{f_{\Sigma}}$, which is the gradient vector of $f_{\Sigma}$. As outside $(0,\infty)_s\times \divSet_{V^{\le \epsilon}}$, the folition is constructed by solving $\gamma'=-J_WR_Y$, the constructions on different regions clearly patch up smoothly. Using $\gamma$ as the Liouville direction on leaves, it is clear that $\completion{V},\completion{\divSet}$ leaves are biholomorphic to $(\completion{V},J_V),(\completion{Y},J_Y)$ respectively.
\end{proof}

\begin{rmk}
We do not need the following in this paper in view of \Cref{Lemma:ContractibleOrbitLocation}, but it will be useful for other purposes.
In the definition of $J_W,J_{\divSet}$, we need the following properties, namely, (1) \eqref{eqn:J_liouville}, (2) $J_W$ on $\xi_{J_W}$ is tamed by $\rd \alpha_{\divSet}$. To have the foliation, we need $-J_WR_Y$ is the gradient vector of $f_{\Sigma}$ through the map to $\Sigma$ in the region outside the trivial foliation by $V^{\le \epsilon}$ as in the proof of \Cref{lemma:foliation_symp}. Note that the taming condition is open, and  $-J_WR_Y$'s image to $\Sigma$ will be gradient like for $f_{\Sigma}$, as long as we do not perturb $\divSet_{\Sigma^{\le \epsilon}}$. As a consequence, we can perturb $\alpha_{\divSet}$ to $\alpha'_{\divSet}$ by perturbing $f_{V}$ on $V^{\le \epsilon}$ to be Morse and apply perturbations to contact forms on $\divSet\backslash (\divSet_{\Sigma^{\le \epsilon}}\cup \divSet_{V^{\le \epsilon}})$ to make every Reeb orbits up to an arbitrarily high threshold to be non-degenerate, as there are typically $S^1$-Morse Bott families of Reeb orbits in $\divSet\backslash (\divSet_{\Sigma^{\le \epsilon}}\cup \divSet_{V^{\le \epsilon}})$, see e.g. \cite[\S 6.2]{Zhou:ADC}. This perturbation changes $\rd \alpha'_{\divSet}$ and $R'_{\divSet}$. We define a new almost complex structure $J'_W$ by changing \eqref{eqn:J_liouville} to the version using the perturbed $R'_{\divSet}$ (with the same $C$ and $f$) and requiring that $J'_W=J_W$ restricted to $\xi_{J_W}$. Therefore $J'_W$ is $\alpha'_{\divSet}$ tame. Since  $-J'_WR'_Y$ will have the same property in \Cref{lemma:foliation_symp}, we get a similar foliation $\foliation_{\divSet}$.

\end{rmk}

\subsection{Extension over $W$}
Now we seek to extend $J_{W}$ -- currently only defined on the half-cylindrical end of $\completion{W}$ -- to a $d\beta_{W}$ compatible almost complex structure on $W$. To do so, we fix $0<\delta \ll \epsilon$, we define a function $b_{\Sigma}:\completion{\Sigma}\to \R_+$ such that
\be
\item $b_{\Sigma}=\epsilon p$ on a neighborhood of $\Sigma^{\ge 1}$;
\item $b_{\Sigma}$ only depends on $p$ on a collar neighborhood of $\partial \Sigma^{\le 1}$ and $\frac{\rd b_{\Sigma}}{\rd p}\ge 0$;
\item $b_{\Sigma}=\epsilon-\delta$ outside the above collar neighborhood in $\Sigma^{\le 1}$
\ee
We also pick a function $h_{\Sigma}$ such that 
\be
\item $h_{\Sigma}=1$ on a neighborhood of $\Sigma^{\ge 1}$;
\item $h_{\Sigma}$ only depends on $p$ on a collar neighborhood of $\partial \Sigma^{\le 1}$ and $\frac{\rd h_{\Sigma}}{\rd p}\ge 0$;
\item $h_{\Sigma}=0$ outside the above collar neighborhood in $\Sigma^{\le 1}$
\ee
Using $b_{\Sigma},h_{\Sigma}$, we extend the definition $J_W$ to $\left\{ V^{\le b_{\Sigma}(z)},z\in \Sigma \right\}$ as follows:
\be
\item $J_W|_{V^{\le b_{\Sigma}(z)}}=(\Flow_{X_V}^ {\log(b_{\Sigma}(z)/\epsilon)})_*J_{V^{\le \epsilon}}$;
\item $J_Wp\partial_p =C\partial_q-h_{\Sigma}J_VX_V$ on the domain where $h_{\Sigma}\ne 0$;
\item $J_W\zeta=J’_\Sigma \zeta$ on the domain where $h_{\Sigma}= 0$. Here $J'_{\Sigma}$ is a $\rd \beta_\Sigma$ compatible almost complex structure on $\Sigma$, such that $J'_{\Sigma}p\partial_p=C\partial_q$ in the neighborhood here $h_{\Sigma}\ne 0$
\ee
To see that it extends $J_W$ smoothly from the positive cylindrical end, note that outside $p\ge 1$, we have 
\be
\item $J_W|_{V^{\le p\epsilon}}=(\Flow_{X_V}^ {\log(p)})_*J_{V^{\le \epsilon}}$;
\item $J_Wp\partial p =C\partial_q-J_VX_V$.
\ee
Hence our definition extends smoothly.
\begin{prop}
    On the domain where $J_W$ is defined so far, $J_W$ is tamed by $\rd \beta_W$ for $C\gg 0$.
\end{prop}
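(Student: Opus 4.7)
The plan is to verify $\rd\beta_W(U, J_W U)>0$ for every nonzero $U$ by splitting the domain of $J_W$ into three pieces---the half-cylindrical end $\im\Phi$, the interior region $\{h_\Sigma = 0\}$, and the transition region $\{h_\Sigma \neq 0\}\setminus\im\Phi$---and treating each separately.

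On $\im\Phi$, the identification $\Phi^\ast\beta_W = s\alpha_{\divSet}$ together with the change of variable $t := \log s$ realises the cylindrical end as a standard symplectization, $\rd\beta_W = \rd(e^t \alpha_{\divSet})$, with $J_W\partial_t = J_W(s\partial_s) = CfR_{\divSet}$. Since $\divSet$ is compact---a finite union of graphs of $f_V$ and $f_\Sigma$ over the compact sets $V^{\leq 1-2\epsilon}\times \Circle_q$ and $Y\times \Sigma^{\leq 1-2\epsilon}$---the conformal factor $f$ admits a uniform positive lower bound $f_{\min}$. Combined with the $\alpha_{\divSet}$-tameness on $\xi_{J_W}$ already established in the preceding propositions, \Cref{lemma:tame} guarantees that $J_W$ is tamed by $\rd\beta_W$ once $Cf_{\min}$ exceeds the universal constant produced by that lemma.

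Away from $\im\Phi$, decompose $U = U_V + U_\Sigma \in TV \oplus T\Sigma$. Because $\rd\beta_V$ (respectively $\rd\beta_\Sigma$) is pulled back from $\completion{V}$ (respectively $\completion{\Sigma}$), it vanishes whenever one of its arguments lies purely in $T\Sigma$ (respectively $TV$); expanding gives
\begin{equation*}
\rd\beta_W(U, J_W U) = \rd\beta_V(U_V, J_W U_V) + \rd\beta_V\bigl(U_V, (J_W U_\Sigma)_V\bigr) + \rd\beta_\Sigma\bigl(U_\Sigma, (J_W U_\Sigma)_\Sigma\bigr).
\end{equation*}
On $\{h_\Sigma = 0\}$, the defining formulas of $J_W$ are block-diagonal in this splitting: its $V$-block $\tilde J_V$ is the Liouville push-forward of $J_V|_{V^{\leq \epsilon}}$ (hence $\rd\beta_V$-tame because $(\Flow_{X_V}^t)^\ast \rd\beta_V = e^t \rd\beta_V$ preserves tameness up to a positive conformal factor), and its $\Sigma$-block $J'_\Sigma$ is $\rd\beta_\Sigma$-compatible by construction. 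The middle cross term vanishes and positivity is immediate.

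The only nontrivial case is $\{h_\Sigma \neq 0\}$. Applying $J_W^2 = -\Id$ to $J_W(p\partial_p) = C\partial_q - h_\Sigma J_V X_V$ gives $J_W\partial_q = -C^{-1}(p\partial_p + h_\Sigma X_V)$. Writing $U_\Sigma = ap\partial_p + b\partial_q$, the diagonal $\Sigma$-contribution evaluates via $\rd\beta_\Sigma = \rd p\wedge \rd q$ to $(a^2 C + b^2/C)p$, while $(J_W U_\Sigma)_V = -h_\Sigma\bigl(aJ_V X_V + (b/C)X_V\bigr)$. Since $|X_V|^2 = \sigma \leq b_\Sigma(z) \leq \epsilon p$ throughout $\{h_\Sigma \neq 0\}$, Cauchy--Schwarz bounds the cross term by a constant multiple of $h_\Sigma|U_V|\sqrt{\epsilon p}\,(|a|+|b|/C)$. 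A weighted AM-GM with weights of order $1/C$ absorbs this into a quarter of the diagonal $\Sigma$-contribution plus a correction of order $|U_V|^2/C$, which is in turn dominated by $\rd\beta_V(U_V, J_W U_V)$ for all sufficiently large $C$. The main obstacle is the uniformity of this absorption as $p\to\infty$: the region $\{h_\Sigma \neq 0\}$ is non-compact in that direction, but the ratio $|X_V|^2/p$ stays bounded by $\epsilon$, and $\tilde J_V$ is the push-forward of the compactly-supported datum $J_V|_{V^{\leq \epsilon}}$ by a Liouville flow (agreeing with the $\R_+$-invariant $J_V$ at $\sigma = b_\Sigma$), so the norm comparison between $\tilde J_V$ and $J_V$ remains uniformly bounded, making the AM-GM estimate close.
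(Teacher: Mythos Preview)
Your proof is correct and follows the same three-region strategy as the paper: the cylindrical end $\im\Phi$ via \Cref{lemma:tame}, the region $\{h_\Sigma = 0\}$ via the block-diagonal splitting, and the transition region $\{h_\Sigma \neq 0\}$ via an explicit Cauchy--Schwarz/AM-GM estimate on the cross term. The paper's computation is essentially identical (with $U_\Sigma = a\partial_p + b\partial_q$ rather than $ap\partial_p + b\partial_q$, and terser wording for the domination).

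The one superfluous element in your argument is the concern about uniformity as $p\to\infty$. The extension formula coincides with the cylindrical-end $J_W$ on a neighborhood of $\Sigma^{\geq 1}$ (this is precisely the smoothness check in the paragraph immediately preceding the proposition), so $\{h_\Sigma \neq 0\}\setminus\im\Phi$ is contained in a compact region with $p$ bounded above by $1$. The paper simply uses this compactness implicitly when asserting that the cross terms are beaten for $C\gg 0$. Your scaling argument is not incorrect, but it is unnecessary; also, the identity $|X_V|^2 = \sigma$ you invoke holds only on the cylindrical collar of $V$, not on all of $V^{\leq b_\Sigma(z)}$ (where the skeleton lies). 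On the compact region in question you need only the uniform bound on $|X_V|$, $|J_V X_V|$, and $|\beta_V|$ coming from compactness of $V^{\leq\epsilon}$ and boundedness of $p$.
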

\begin{proof}
    By \Cref{lemma:tame}, for $C\gg 0$, we have $J_W$ is tamed by  $\rd \beta_W$ on the positive cylindrical end. Now over $\left\{ V^{\le b_{\Sigma}(z)},z\in \Sigma \right\}$, when $h_{\Sigma}=0$, $J_W=(\Flow_{X_V}^ {\log((\epsilon-\delta)/\epsilon)})_*J_{V^{\le \epsilon}}\oplus J'_{\Sigma}$, which is clearly tamed by $\rd \beta_W$. Now we consider the region where $h_{\Sigma}\ne 0$, for $U\in TV$ and $V=a\partial_p+b\partial_q$, we compute that $\rd \beta_W (U+V,J_WU+J_WV)$ as 
    $$\rd \beta_V(U,J_WU)+\frac{a^2C}{p}+\frac{b^2p}{C}+\rd \beta_V(U,-\frac{ah_{\Sigma}}{p}J_VX_V-\frac{bh_{\Sigma}}{C}X_V).$$
    For $C\gg 0$, $a^2C/p+\frac{1}{2}\rd \beta_V(U,J_WU)$ will beat $-\frac{ah_{\Sigma}}{p}\rd \beta_V(U,J_VX_V)$. 
    Note that 
    $$\rd \beta_V(U,-\frac{bh_{\Sigma}}{C}X_V)=\frac{bh_{\Sigma}}{C}\beta_V(U)$$
    It is clear that for $C\gg 0$, $b^2p/C+\frac{1}{2}\rd \beta_V(U,J_WU)$ will beat $\frac{bh_{\Sigma}}{C}\beta_V(U)$. As a consequence, when $C\gg 0$,  $J_W$ is tamed by $\rd \beta_W$.
\end{proof}

The subset of $W$ within which $J_{W}$ is not yet defined is then a proper subset of $(\epsilon-\delta, \infty)_{\sigma} \times Y \times \Sigma$, where $R_{Y}$ and $\partial_{\sigma}$ are defined.

\begin{lemma}\label{Lemma:JRY}
Everywhere that $J_{W}$ has been so far defined within $(\epsilon-\delta, \infty)_{\sigma} \times Y \times \Sigma$, we have
\begin{equation*}
J_{W}R_{Y} = A\partial_{\sigma} + Z_{\Sigma}
\end{equation*}
satisfying the properties that
\be
\item $A$ is a strictly negative function,
\item $Z_{\Sigma}$ is a vector field with values in $T\Sigma \subset TW$, and
\item both $A$ and $Z_{\Sigma}$ are independent of $y \in Y$.
\ee
\end{lemma}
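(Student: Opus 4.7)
The plan is to verify all three claims by an explicit case-by-case computation over the patches defining $J_W$ within $(\epsilon-\delta,\infty)_\sigma\times Y\times\Sigma$. The $y$-independence is structural: every datum entering the construction of $J_W$ (the cut-off functions $f_V, f_\Sigma, b, b_\Sigma, h_\Sigma$, the splitting $TY = \langle R_Y\rangle \oplus \xi_Y$, the auxiliary almost complex structures $J_V, J_\Sigma, J'_\Sigma$, and the Liouville vector field $X_V$) is invariant under the $Y$-translations preserving the cylindrical structure, so $J_W R_Y$ has coefficients depending only on $(\sigma,p,q)$. A common feature of the individual calculations is that in each patch $J_W$ sends $\{\partial_\sigma,\partial_p\}$ into $\langle R_Y,\partial_q\rangle$ while preserving $\xi_Y$, so $J_W R_Y$ automatically lands in $\langle\partial_\sigma\rangle \oplus T\Sigma$; the substantive content is to read off $A$ in each patch and verify that it is strictly negative.

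On the extension region $\{v\in V^{\le b_\Sigma(z)}\}$, the rule $J_W|_{TV} = (\Flow^{\log(b_\Sigma/\epsilon)}_{X_V})_\ast J_V$ applied to $R_Y\in TV$, combined with the $\R_+$-invariance of $J_V$ outside $V^{\le \epsilon}$ and the identity $J_V\partial_\sigma = R_Y/\sigma$ in the cylindrical part, yields $J_W R_Y = -\sigma\partial_\sigma$, so $A=-\sigma<0$ and $Z_\Sigma=0$. On the piece of $\im\Phi^V$ over $V^{[\epsilon,2\epsilon]}$, the relation \eqref{Eq:JMV} reads $J_W\bigl(\sigma\partial_\sigma+\sigma\frac{\partial f_V}{\partial\sigma}\partial_p\bigr)=R_Y$; applying $J_W$ one more time gives $J_W R_Y = -\sigma\partial_\sigma - \sigma\frac{\partial f_V}{\partial\sigma}\partial_p$, with $A=-\sigma<0$ and $Z_\Sigma = -\sigma\frac{\partial f_V}{\partial\sigma}\partial_p\in T\Sigma$. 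On the piece of $\im\Phi^\Sigma$ over $\Sigma^{[\epsilon,2\epsilon]}$, an analogous computation based on \eqref{Eq:JAlongMSigma}, using the formula $J_W\partial_q = -p\partial_p - p\frac{\partial f_\Sigma}{\partial p}\partial_\sigma$ (derived from the second line of \eqref{Eq:JAlongMSigma} with $\zeta=\partial_q$ together with $J_\Sigma\partial_q = -p\partial_p$), yields
\begin{equation*}
J_W R_Y = -\frac{f_\Sigma + Cp\bigl(\frac{\partial f_\Sigma}{\partial p}\bigr)^2}{C}\partial_\sigma - \frac{\bigl(1+C\frac{\partial f_\Sigma}{\partial p}\bigr)p}{C}\partial_p,
\end{equation*}
whose $\partial_\sigma$-coefficient is strictly negative because $f_\Sigma>0$, while the $\partial_p$-term lies in $T\Sigma$.

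The main obstacle is the overlap region $\divSet_o$, where the defining formulas prescribe $J_W$ on $\partial_\sigma-\partial_p$ and on $s\partial_s = \sigma\partial_\sigma+p\partial_p$ rather than on $R_Y$ directly. Applying $J_W$ to the relations $J_W(\partial_\sigma-\partial_p)=\frac{1-b}{\sigma}R_Y-\frac{b}{p}\partial_q$ and $J_W s\partial_s = R_Y+\partial_q$, and using $\sigma+p=1$ on $\divSet_o$, produces a $2\times 2$ linear system in the unknowns $J_W R_Y$ and $J_W\partial_q$. Solving gives
\begin{equation*}
J_W R_Y = -\frac{\sigma(p+\sigma b)}{D}\partial_\sigma + \frac{\sigma p(1-b)}{D}\partial_p, \qquad D := p(1-b)+b\sigma > 0.
\end{equation*}
Since $\sigma,p>0$ and $b\in[0,1]$, we have $D>0$ and $p+\sigma b>0$, so $A<0$, and $Z_\Sigma = (\sigma p(1-b)/D)\,\partial_p\in T\Sigma$. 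Extension by $\R_+$-invariance covers all of $\im\Phi$, and compatibility of the patched formulas across boundaries is built into the construction of $J_W$ via the choices of $b,b_\Sigma,h_\Sigma$, leaving only a routine bookkeeping check.
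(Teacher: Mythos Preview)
Your approach is correct and more explicitly computational than the paper's. The main difference is in how you establish $A<0$: you compute $A$ patch-by-patch, whereas the paper dispatches it in one line using the taming condition. Namely, since $R_Y$ is the Reeb field for $\alpha_Y$ one has $d\beta_W(R_Y,\cdot)=d\sigma\wedge\alpha_Y(R_Y,\cdot)=-d\sigma(\cdot)$ on this region, so $0<d\beta_W(R_Y,J_WR_Y)=-d\sigma(J_WR_Y)=-A$. This argument is uniform and avoids the linear algebra you do on $\divSet_o$. Conversely, your explicit formulas are informative (e.g.\ they recover exactly $J_WR_Y=-\sigma\partial_\sigma$ on the extension region, which the paper also singles out) and make the $T\Sigma$-valuedness of $Z_\Sigma$ completely transparent.

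One small gap in your case analysis: you treat $\im\Phi^\Sigma$ only over $\Sigma^{[\epsilon,2\epsilon]}$, but $\im\Phi^\Sigma\setminus\im\Phi^V$ also contains the region over the interior $\Sigma^{\le\epsilon}$, where $\partial_p,\partial_q$ are not global coordinates and your shortcut ``$J_W$ sends $\{\partial_\sigma,\partial_p\}$ into $\langle R_Y,\partial_q\rangle$'' does not literally apply. This is easily repaired using \eqref{Eq:JAlongMSigma} directly: from $J_WX_W=C(R_Y-\fX^\Sigma_{f_\Sigma})$ one gets $J_WR_Y=-C^{-1}X_W+J_W\fX^\Sigma_{f_\Sigma}$, and since $df_\Sigma(\fX^\Sigma_{f_\Sigma})=0$ the second line of \eqref{Eq:JAlongMSigma} gives $J_W\fX^\Sigma_{f_\Sigma}=J_\Sigma\fX^\Sigma_{f_\Sigma}+df_\Sigma(J_\Sigma\fX^\Sigma_{f_\Sigma})\partial_\sigma$, so $J_WR_Y\in\langle\partial_\sigma\rangle\oplus T\Sigma$ with $A=-\sigma/C-|\nabla f_\Sigma|^2<0$. (The paper handles this region by citing \eqref{Eq:JAlongMSigma} along $\divSet_\Sigma$ and then invoking $X_W$-flow invariance, which preserves the splitting $\langle R_Y\rangle\oplus\langle\partial_\sigma,T\Sigma\rangle\oplus\xi_Y$.)
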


\begin{proof}
That $A$ is strictly negative follows from the fact that $J_{W}$ is $d\beta_{W}$ compatible and $0<d\beta_{W}(R_{Y}, JR_{Y}) = -d\sigma(JR_{Y}) = -A$. That $J_{W}R_{Y} - A\partial_{\sigma}$ is contained in $T\Sigma$ along $\divSet_{\Sigma}$ follows from Equation \eqref{Eq:JAlongMSigma}. This property continues to hold in a neighborhood of $\divSet_{\Sigma}$ in $\im \Phi$ using the flow invariance property w.r.t.\ $X_W$, since the flow of $X_{W}$ preserves the splitting of $TW$ as $\langle R_Y\rangle \oplus\langle  \partial_{\sigma}, T\Sigma \rangle \oplus \xi_{Y}$. Along the lower boundary from $\sigma = b_{\Sigma}(z)$, we have $J_{W}R_{Y}= -\sigma \partial_{\sigma}$ by $J_W|_{V^{\le b_{\Sigma}(z)}}=(\Flow_{X_V}^ {\log(b_{\Sigma}(z)/\epsilon)})_*J_{V^{\le \epsilon}}$. The last property above also follows from the inspection of the aforementioned equations and flow invariance, as the flow of $X_W$ does not change the $y$.
\end{proof}

To complete the definition of $J_{W}$ within $(\epsilon-\delta, \infty)_{\sigma} \times Y \times \Sigma$, choose an extension so that
\be
\item $J_{W}R_{Y}$ satisfies the properties listed in Lemma \ref{Lemma:JRY},
\item $J_{W}|_{\xi_{Y}} = J_{\xi_Y}$, and
\item for each $\zeta \in T\Sigma$, $d\beta_{\Sigma}(\zeta, J\zeta) > 0$;
\item $J_W$ is tamed by $\rd \beta_W$.
\ee
This completes the definition on all of $J_{W}$. As a consequence, $J_W$ is an almost complex structure on $\completion{W}$, such that $J_W$ is tamed by $\rd \beta_W$ on $\completion{W}$ and $\alpha_\divSet$-tame on the positive cylindrical end, i.e. an almost complex structure for cobordisms used in \cite{BH-cylindrical,BH}.

\begin{proof}[Proof of \Cref{Lemma:FoliationProperties}]
    For $z\in \completion{\Sigma}$, and $v\in V^{\le \epsilon}$, we define 
    $$\leafInclusion_z(v) = \left(\Flow_{X_V}^ {\log(b_{\Sigma}(z)/\epsilon)}(v),z\right)\in \completion{V}\times \completion{\Sigma}$$
    where $b_{\Sigma}$ is extended to $\completion{\Sigma}$ by the linear function $\epsilon p$. It is clear from the definition of $J_W$, $\leafInclusion_z$ is holomorphic using $J_V$ on $V^{\le \epsilon}$. Now we extend the leaves by the image of 
    $$Y\times I \to \completion{W}, \quad (y,t)\mapsto (y,\gamma(t)),$$
    where $\gamma'(t)=-J_WR_Y$. Now since $J_WR_Y$ has a negative coefficient in $\partial_{\sigma}$, the path $\gamma$ always leaves the abstract extension region and outside this region, those leaves coincide with the foliation $\foliation_{\divSet}$ in \Cref{lemma:foliation_symp}. Since near the boundary of $\leafInclusion_z(V^{\le \epsilon})$, $-J_WR_Y$ is parallel to $\partial_{\sigma}$, two foliations patch together smoothly. 
\end{proof}

We use $\foliation_W$ to denote the foliation in \Cref{Lemma:FoliationProperties}, which on the positive cylindrical end coincides with $\foliation_{\divSet}$ in \Cref{lemma:foliation_symp}.

\section{Holomorphic curves lie in leaves}

In this section we provide a detailed analysis of contractible Reeb orbits in the contact hypersurface $\divSet \subset \completion{W}$. Then we show that holomorphic planes in $\completion{W}$ which are positively asymptotic to these orbits must be contained in leaves of the holomorphic foliation $\foliation_W$ described in the preceding section. Similarly we show that all $CH$-type holomorphic curves in $\completion{\divSet}$ which are asymptotic to only contractible orbits are contained in leaves of $\foliation_{\divSet}$.

\subsection{Detailed dynamics on $Y \times \Sigma^{\le \epsilon}$}\label{Sec:DetailedDynamics}

Here we study dynamics of the contractible closed orbits of the Reeb vector field $R_{\divSet}$ along the contact hypersurface $\divSet$, providing detailed models of neighborhoods of orbits and computations of Conley-Zehnder indices. According to Lemma \ref{Lemma:ContractibleOrbitLocation}, all such orbits are contained in the subset $\divSet_{\Sigma^{\le \epsilon}} \subset \divSet$ defined as the graph of a function $f_{\Sigma} \in \Cinfty(\Sigma)$ described in \S \ref{Sec:divSetSigmaDef}. We work in the domain of the map $\Phi^{\Sigma}$ described there along the subset $Y_{y} \times \Sigma^{\le \epsilon}_{z}$. 

\begin{notation}
Throughout this section we write $f=f_{\Sigma}$ with associated Hamiltonian vector field $\fX_{f} = \fX^{\Sigma}_{f_{\Sigma}}$ to simplify notation.
\end{notation}

The Reeb vector field $R_{\divSet}$ is described in Equation \eqref{Eq:ReebMSigma} and can alternatively be written
\begin{equation*}
R_{\divSet} = \left(f - \beta_{\Sigma}(\fX_{f})\right)^{-1}\left( R_{Y} - J_{\Sigma}\grad f\right).
\end{equation*}

Here we are using the almost complex structure $J_{\Sigma}$ on $\Sigma^{\le \epsilon}$ and $\grad$ is the gradient on $\Sigma$ with respect to the induces metric $d\beta_{\Sigma}(\ast, J_{\Sigma}\ast)$. We recall that our contact form is $\alpha_{\divSet}=f\alpha_{Y} + \beta_{\Sigma}$.

We assume a Morse-Smale setup so that $\grad f$ flow lines completely contained in $\Sigma^{\le \epsilon}$ have strictly positive Morse indices as $\rd f/\rd p<0$ along the boundary of $\Sigma^{\le \epsilon}$ in  \S \ref{Sec:divSetSigmaDef}. We also assume that near each $\zeta \in \Crit(f)$ we have a disk in $\Sigma$ with coordinate $z=(x, y)$ within which
\begin{equation}\label{Eq:MorseNearCritical}
\begin{gathered}
\beta_{\Sigma} = \half(-y\rd x+ x\rd y), \quad J_{\Sigma}\partial_{x} = \partial_{y}\quad f = f(\zeta) + Q_{\zeta}(x, y),\\
\implies \rd\beta_{\Sigma} = \rd x\wedge \rd y, \quad \xi_{M}|_{Y \times \{\zeta\}} = \xi_{Y} \oplus T_{\zeta}\Sigma
\end{gathered}
\end{equation}
for a quadratic form $Q_{\zeta}$ on the variables $x, y$. Because the function $\rd f$ is $\mathcal{C}^{1}$-small along $\Sigma^{\le \epsilon}$ and decreases as we approach the boundary, we can assume that
\be
\item there is a single $\zeta_{2}$ of $\ind_{\Morse}(\zeta_{2}) = 2$ and
\item there are $-1-\chi(\Sigma)$ critical points $\zeta_{1, k}$ of $\ind_{\Morse}(\zeta_{1, k})= 1$.
\ee
More specifically, we assume that
\be
\item near $\zeta_{2}$
\begin{equation*}
f=1 + 2\delta  - \half\epsilon(x^{2} + y^{2}), \quad \grad f = -\epsilon(x\partial_{x} + y\partial_{y}), \quad -\fX_{f}=\epsilon(-y\partial_{x} + x\partial_{y}),
\end{equation*}
\item near each $\zeta_{1, k}$
\begin{equation*}
f= 1 + \delta  - \half\epsilon(x^{2} - y^{2}), \quad \grad f = \epsilon(-x\partial_{x} + y\partial_{y}), \quad -\fX_{f}=\epsilon(y\partial_{x} + x\partial_{y}),
\end{equation*}
\ee
for $\epsilon, \delta > 0$ arbitrarily small. Then each closed orbit $\gamma_{Y}$ of $R_{Y}$ in $Y$ determines $-\chi(\Sigma)$ closed orbits of $R_{\divSet}$,
\begin{equation*}
\gamma_{2} = \gamma_{Y} \times \zeta_{2}, \quad \gamma_{1, k}=\gamma_{Y}\times \zeta_{1, k},
\end{equation*}
in $Y \times \Sigma^{\le \epsilon}$ whose actions are
\begin{equation*}
\int_{f\alpha + \lambda}\orbit_{2} = (1 + 2\delta)\int_{\alpha}\gamma_{Y}, \quad \int_{f\alpha + \lambda}\orbit_{1,k} = (1 + \delta)\int_{\alpha}\gamma_{Y}.
\end{equation*}
The key observations here are that when we project our Reeb vector field onto small $\disk \subset \Sigma$ neighborhoods of the critical points of $f$ we see an elliptic fixed point of $-\fX_{f}$ of slightly positive rotation about $\zeta_{2}$ and a hyperbolic fixed point about $\zeta_{1,k}$.

We compute the Conley-Zehnder indices and contact homology gradings of these orbits. In order to obtain $\Z$-valued indices and gradings, we must frame our orbits. Let $\framing_{Y}$ be a framing of $\xi_{Y}$ over $\gamma_{Y}$ from which we obtain a framing
\begin{equation*}
    \framing = (\framing_{Y}, \partial_{x})
\end{equation*}
of $\xi$ over each $\gamma_{2}$ and $\gamma_{1, k}$. By the additivity of the Conley-Zehnder index $\CZ$ with respect to direct sums together with standard computations of $\CZ$ for Hamiltonian fixed points on surfaces, we see that
\begin{equation*}
\CZ_{\framing}(\gamma_{1, k}) = \CZ_{\framing_{Y}}(\gamma_{Y}), \quad \CZ_{\framing}(\gamma_{2}) = \CZ_{\framing_{Y}}(\gamma_{Y}) + 1 + \left\lfloor \frac{\epsilon (1+2\delta)}{2\pi} \int_{\alpha}\gamma_{Y} \right\rfloor.
\end{equation*}
Consequently, contact homology indices $|\ast|_{\framing}$ computed with respect to our framings are
\begin{equation*}
|\gamma_{1, k}|_{\framing} = |\gamma_{Y}|_{\framing_{Y}} + 1, \quad |\gamma_{2}|_{\framing} = |\gamma_{Y}|_{\framing_{Y}} + 2
\end{equation*}
for a fixed $\gamma_{Y}$ whenever $\epsilon$ is sufficiently small.

\subsection{Change of coordinates and asymptotics of the $\leaf_{z}$}

We recall that in \S \ref{Sec:divSetSigmaDef}, we used a domain $[1, \infty) \times Y \times \Sigma^{\le \epsilon}$ with coordinates $(s, y, z)$ to define a map $\Phi^{\Sigma}$ into $[2\epsilon, \infty)_{\sigma} \times Y_{y} \times \completion{\Sigma} \subset \completion{W}$ via $\Phi^{\Sigma}(\sigma, y, z) = \Flow^{\log(s)}_{X_{W}}(f(z), y, z)$ implying that $(\Phi^{\Sigma})^{\ast}\beta_{W} = s(f\alpha_{Y} + \beta_{\Sigma})$ and that $X_{W} = s\partial_{s}$ on the coordinates of the domain.

We make a change of coordinates on the domain so that $J_{W}$ will have a standard form, replacing $s$ with $e^{s/C}$. Throughout the remainder of this section, we use instead a map
\begin{equation*}
\Psi^{\Sigma}: [0, \infty)_{s} \times Y_{y} \times \Sigma^{\le \epsilon} \rightarrow [2\epsilon, \infty)_{\sigma} \times Y_{y} \times \completion{\Sigma}, \quad \Psi^{\Sigma}(s,y,z) = \Flow_{X_{W}}^{s/C}(f(z), y, z).
\end{equation*}
This implies that on the domain of $\Psi^{\Sigma}$ our Liouville form and almost complex structure $J_{W}$ are given by
\begin{equation*}
\beta_{W} = e^{s/C}(f\alpha_{Y} + \beta_{\Sigma}), \quad J_{W}\partial_{s} = (R_{Y} - X_{f}), \quad J_{W}|_{T\Sigma} = J_{\Sigma}.
\end{equation*}
Using \eqref{Eq:JAlongMSigma}, $J_W$ on the submanifold $Y\times \Sigma^{\le \epsilon}\subset W^{\Box}$ with contact form $f_{\Sigma}\alpha_Y+\beta_\Sigma$, $J_W$ on the graph of $f_{\Sigma}$ identifies with $J_{\Sigma}$ on $T_{\Sigma}$ using the $f_{\Sigma}$. Therefore $J_W$ under the pullback of $\Psi^{\Sigma}$ is given by the above formula. 

Using this coordinate system, the leaves $\leaf_{z}$ of the holomorphic foliation $\foliation_\divSet,\foliation_W$ of \Cref{lemma:foliation_symp,Lemma:FoliationProperties} have tangent spaces
\begin{equation*}
T\leaf_{z} = \xi_{Y} \oplus \langle R_{Y}, J_{W}R_{Y}\rangle  = \xi_{Y} \oplus\langle \partial_{s} + \grad f, R_{Y}\rangle .
\end{equation*}
This formula together with already-established observations, we restate the following property of the asymptotic of $\leaf$ in \Cref{lemma:foliation_symp}.

\begin{lemma}\label{Lemma:LeafAsymptotics}
In the $\Psi$ coordinate system, the intersection of each $\leaf_z$ with $\im \Psi$ can be parameterized by a map solving a single variable ordinary differential equation
\begin{equation*}
\Upsilon_{z}: [0, \infty) \times Y \rightarrow [0, \infty) \times Y \times \Sigma^{\le \epsilon}, \quad \Upsilon_{z}(s, y) = (s, y, \upsilon_{z}(s)), \quad \frac{\partial \upsilon_{z}}{\partial s} = \grad f(s)
\end{equation*}
with initial condition $\upsilon_{z}(0)$ uniquely determined by $z$. So in this coordinate system the projection $\pi_{\Sigma}\leaf_{z} \subset \Sigma^{\le \epsilon}$ is a $\grad f$ flow line. As $\grad f$ points into $\Sigma^{\le \epsilon}$ along its boundary, for each $z \in \Sigma$ the positive asymptotic of the leaf is given by
\begin{equation*}
\lim_{s \rightarrow \infty}\pi_{Y \times \Sigma^{\le\epsilon}}\Upsilon_{z}(\{s\} \times Y) = Y \times \{ \zeta\}, \quad \zeta \in \Crit(f).
\end{equation*}
\end{lemma}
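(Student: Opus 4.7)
The plan is to integrate the tangent distribution $T\leaf_z = \xi_Y \oplus \langle \partial_s + \grad f, R_Y\rangle$ recorded just above the lemma statement; the heavy lifting has already been done in Lemmas \ref{Lemma:FoliationProperties} and \ref{lemma:foliation_symp}, so only explicit bookkeeping in the $\Psi^\Sigma$ chart remains. First I would verify that this tangent formula really is valid in the rescaled coordinates: the pullback $(\Psi^\Sigma)^\ast\beta_W = e^{s/C}(f\alpha_Y + \beta_\Sigma)$ together with \eqref{Eq:JAlongMSigma} after the change $s \mapsto e^{s/C}$ gives $J_W\partial_s = R_Y - \fX_f$ and $J_W|_{T\Sigma} = J_\Sigma$, and the Kähler identity $J_\Sigma\fX_f = -\grad f$ then gives $J_W R_Y = -\partial_s - \grad f$, which is exactly the transverse generator in the advertised tangent splitting.

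Next, I would read off the parameterization. Since $\xi_Y \oplus \langle R_Y\rangle = TY$ spans the $Y$-factor of $T\leaf_z$ and the transverse vector $\partial_s + \grad f$ has trivial $Y$-component, any integral submanifold of this distribution in $\im\Psi^\Sigma$ must take the product form $(s,y) \mapsto (s,y,\upsilon_z(s))$ with $\upsilon_z$ solving the ODE $\dot\upsilon_z = \grad f(\upsilon_z)$ inside $\Sigma^{\le\epsilon}$. Existence and uniqueness for this ODE pin $\upsilon_z$ down from its initial value, which is in turn forced by matching $\Upsilon_z(\{0\}\times Y)$ to the $s=0$ slice of $\leaf_z$ coming from $\leafInclusion_z$ in the proof of Lemma \ref{Lemma:FoliationProperties}.

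Finally, I would handle the asymptotic using Morse theory. The condition $\rd f_\Sigma/\rd p<0$ on the collar of $\partial\Sigma^{\le\epsilon}$ imposed in \S\ref{Sec:divSetSigmaDef} makes $\grad f$ point strictly inward along the boundary, so the $\grad f$-flow is forward-complete inside the compact manifold $\Sigma^{\le\epsilon}$; since it is a gradient flow for a Morse function, every trajectory converges as $s\to\infty$ to a point of $\Crit(f)\cap\Sigma^{\le\epsilon}=\{\zeta_2\}\cup\{\zeta_{1,k}\}$, and composing with $\Upsilon_z$ gives the stated limit $Y\times\{\zeta\}$. The only delicate moment is tracking sign conventions for $\fX_f$, $\grad f$, and $J_\Sigma$ under $\rd\beta_\Sigma$-compatibility; once these are consistent with how $J_W$ was defined along $\divSet_\Sigma$ in \eqref{Eq:JAlongMSigma}, nothing else in the proof requires work beyond the ODE existence theorem and the standard Morse convergence lemma.
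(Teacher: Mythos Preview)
Your proposal is correct and follows exactly the route the paper takes. The paper does not give a separate proof of this lemma: immediately before the statement it records the tangent formula $T\leaf_z = \xi_Y \oplus \langle \partial_s + \grad f, R_Y\rangle$ in the $\Psi^\Sigma$ chart and then says the lemma is a restatement of the asymptotic behavior already established in Lemma~\ref{lemma:foliation_symp}, so your detailed bookkeeping (verifying $J_W R_Y = -\partial_s - \grad f$, reading off the ODE, and invoking Morse convergence using $\partial f/\partial p<0$ at the boundary) is precisely the content the paper leaves implicit.
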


\subsection{Holomorphic asymptotics near critical point orbits}\label{Sec:Asymptotics}

We consider holomorphic maps $u$ into either $\completion{W}$ or $\completion{\divSet}$ which positively asymptotic to our contractible $R_{\divSet}$ orbits $\gamma_{\zeta} := \gamma_{Y} \times \zeta$ for $\zeta \in \Crit(f)$. In this subsection we describe asymptotic Fourier expansions about ends of such $u$ in the ``$L$-simple'' style of \cite{BH, BH-cylindrical}.

Let $N_Y(\gamma_Y) \subset Y$ be a $\Circle \times \disk^{n-1}$ ($\disk$ is a disk in $\C$) neighborhood of an embedded $\gamma_{Y}$ in $Y$. A $N_{Y}(\gamma_Y) \times \disk \subset Y \times \Sigma^{\le \epsilon}$ neighborhood centered about each $\gamma_{\zeta} = \gamma_{Y} \times \zeta$ in $Y \times \disk$ is exactly in the form local described in \cite{Avdek:Hypersurface,BH}. For the purpose of studying asymptotics of holomorphic curves to multiply-covered $\gamma_{\zeta}$ which are $k$-fold coverings of an embedded $\gamma'_{\zeta}$, we can take the $N(\gamma_{\zeta})$ to be a $k$-fold covering of $N(\gamma'_{\zeta})$ and consider lifts $u$ -- restricted to neighborhoods of punctures in the domain -- to this $N(\gamma_{\zeta})$.

Since $u$ is positively asymptotic to $\gamma_{\zeta}$, we can assume that for $s_{0} > 0$ sufficiently large there is cylindrical end $[s_{0}, \infty)_{p} \times \Circle_{q}$ of the domain of $u$ send via $u$ to $[s_{0}, \infty) \times N_{Y}(\gamma_{Y}) \times \disk \subset [s_{0}, \infty)\times Y \times \Sigma^{\le \epsilon}$. Hence we can write the restriction of $u$ to this positive cylindrical end as
\begin{equation*}
u: [s_{0}, \infty)_{p} \times \Circle_{q} \rightarrow [s_{0}, \infty)_s \times Y \times \Sigma^{\le \epsilon}, \quad u(p,q) = (s(p, q), u_{Y}, u_{\disk})
\end{equation*}
Possibly after increasing $s_{0}$ we can further assume that $s(p, q) = p$ using the standard complex structure $\domainJ \partial_{p} = \partial_{q}$ on the domain. This is a consequence of the Riemann mapping theorem, cf. \cite[\S 2]{Avdek:Hypersurface}. Then the fact that $u$ is holomorphic implies that
\begin{equation*}
\delbar_{J_{Y}}(s, u_{Y}) = 0, \quad \delbar u_{\disk} - \left( X_{f} \otimes ds \right)^{0, 1} = 0.
\end{equation*}
It follows that $u_{\disk}$ admits a Fourier expansion of the form
\begin{equation}\label{Eq:LocalFourier}
u_{\disk} = \sum_{\eigenvalue_{k} < 0}\sum_{k,l}c_{k,l}e^{\eigenvalue_{k}p}\eigenfunction_{k,l}(q), \quad c_{k,l} \in \R.
\end{equation}
Here $(\eigenvalue_{k},\eigenspace_{k})$ is an eigenvalue-eigenspace decomposition of the asymptotic operator
\begin{equation}\label{Eq:NormalAsymptoticOp}
\AsymptoticOp_{\zeta}: \Sobalev^{1, 2}(\Circle, \C) \rightarrow \Ltwo(\Circle, \C), \quad \AsymptoticOp_{\zeta} = -J_{\Sigma}\left(\frac{\partial}{\partial_{q}} + (-1)^{\ind_{\Morse}(f, \zeta)}\epsilon y\partial_{y} - \epsilon x\partial_{x}\right)
\end{equation}
and the $\eigenfunction_{k, l}$ are a basis of eigenfunctions spanning each $\eigenspace_{k}$. Note that the condition $\eigenvalue_{k} < 0$ in Equation \eqref{Eq:LocalFourier} is required in order that we get asymptotic convergence to the orbit as $p \rightarrow \infty$.

We index the eigenvalues of $\AsymptoticOp_{\zeta}$ so that $\eigenvalue_{k + 1} > \eigenvalue_{k}$ with $\sgn(\eigenvalue_{k})=\sgn(k)$. Since $\epsilon \neq 0$, the kernel of $\AsymptoticOp_{\zeta}$ is zero, so the indexing set for the $k$ is $\Z_{\neq 0}$. The operators $\AsymptoticOp_{\zeta}$ are identical to those associated to elliptic and positive hyperbolic orbits in $L$-simple contact $3$-manifolds which are explicitly worked out in \cite[\S 4]{BH-cylindrical}. We summarize the key properties of the $\eigenvalue_{k}, \eigenspace_{k}, \eigenfunction_{k,l}$ for $k < 0$, which we need for the analysis of positive asymptotics of holomorphic curves, as follows:
\be
\item Each $\eigenfunction_{k,l}$ sends $\Circle$ to $\disk \setminus \{ 0 \}$ and so has a winding number $\wind_{k}\in \Z$ which depends only on $k$.
\item For both $\ind_{\Morse}(f, \zeta)=1, 2$, the condition $k < -1$ implies $\wind_{k} < 0$.
\item For both $\ind_{\Morse}(f, \zeta)=1, 2$, the largest negative eigenvalue is $\eigenvalue_{-1}=-\epsilon$.
\item When $\ind_{\Morse}(f, \zeta)=2$, the elliptic case, the $\eigenvalue_{-1}$ eigenspace has $\dim=2$ and is spanned by constant eigenfunctions $\eigenfunction_{-1, x} = (1, 0)$ and $\eigenfunction_{-1, y}(0, 1)$, both having $\wind_{-1}=0$.
\item For $\ind_{\Morse}(f, \zeta)=1$, the hyperbolic case, the $\eigenfunction_{-1}$ eigenspace has $\dim=1$ spanned by $\eigenfunction_{-1} = (1, 0)$, also having $\wind_{-1} = 0$.
\ee

Observe that if $c_{k,l} = 0$ for all $k < -1$ in Equation \eqref{Eq:LocalFourier}, then $u_{\disk}$ is exactly a gradient flow line of $f$. Indeed, we will then have $x_{0}, y_{0} \in \R$ for which
\begin{equation*}
u_{\disk}(p, q) = \begin{cases}
    e^{-\epsilon p}(x_{0}, 0) & \ind_{\Morse}(f,\zeta)=1,\\
    e^{-\epsilon p}(x_{0}, y_{0}) & \ind_{\Morse}(f,\zeta)=2.
\end{cases}
\end{equation*}
If $u_{\disk}$ has this form, then the half cylindrical end of $u$ is contained in some leaf $\leaf_{z}$ of $\foliation$ by Lemma \ref{Lemma:LeafAsymptotics}. By the analytic continuation property for holomorphic curves \cite[\S 2.2]{MS:Curves}, it would then follow that all of $\im u$ is contained in the same $\leaf_{z}$. In summary we have proved the first statement of the following lemma. The proof of the second statement is identical.

\begin{lemma}\label{Lemma:LeafFourierCriterion}
A holomorphic map $u$ from a punctured Riemann surface into $\completion{W}$ (or $\completion{\divSet}$) having a puncture positively asymptotic to an orbit $\orbit_\zeta, \zeta\in \Crit(f)$ is contained in a leaf of $\foliation_W$ (respectively, $\foliation_{\divSet}$) iff at that positive end the Fourier expansion of Equation \eqref{Eq:LocalFourier} has $c_{k,l}=0$ for all $k < -1$.
\end{lemma}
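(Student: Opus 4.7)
The plan is to establish the two implications separately; the $\completion{W}$ and $\completion{\divSet}$ cases admit identical arguments, so I focus on $\completion{W}$. For the \textbf{forward direction}, assume $c_{k,l}=0$ for all $k<-1$. Then \eqref{Eq:LocalFourier} collapses to the $k=-1$ modes, which by the explicit eigenfunction description preceding the lemma consist entirely of constant eigenfunctions; this yields $u_\disk(p,q)=e^{-\epsilon p}(x_0,y_0)$ in the elliptic case and $e^{-\epsilon p}(x_0,0)$ in the hyperbolic case. Matching against the linearization of $\grad f$ at $\zeta$ identifies $u_\disk(p,q)$ with the gradient flow line $\upsilon_z(p)$ of $f$ determined by the initial data, so by Lemma \ref{Lemma:LeafAsymptotics} $u$ agrees on the half-cylindrical end with the parameterization $\Upsilon_z$ of $\leaf_z\cap \im\Psi$, and in particular maps into the $J_W$-holomorphic submanifold $\leaf_z$ on a nonempty open subset of its connected domain. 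Analytic continuation for $J$-holomorphic curves then propagates this to $\im u \subset \leaf_z$.

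For the \textbf{backward direction}, assume $\im u \subset \leaf_z$. Lemma \ref{Lemma:LeafAsymptotics} in the positive-end coordinates where $s=p$ describes $\leaf_z \cap \im\Psi$ as the graph $(s,y)\mapsto(s,y,\upsilon_z(s))$, so the $\Sigma$-component of $u$ satisfies $u_\disk(p,q)=\upsilon_z(p)$ -- crucially independent of $q$. Since the properties of $\AsymptoticOp_\zeta$ recorded before the lemma show that the $k=-1$ eigenspace is the only negative eigenspace spanned by eigenfunctions constant in $q$ (all $k<-1$ eigenfunctions have nonzero winding), matching against \eqref{Eq:LocalFourier} forces $c_{k,l}=0$ for every $k<-1$.

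The \textbf{main obstacle} is the unique continuation claim invoked in the forward direction: a $J_W$-holomorphic curve whose image lies in a $J_W$-holomorphic submanifold on an open set must lie there globally. The argument is local: near any point $p_0$ of the domain with $u(p_0)\in \leaf_z$, choose a holomorphic chart of $\completion{W}$ adapted to the foliation $\foliation_W$ in which $\leaf_z$ appears as a linear subspace, and let $v$ denote the projection of $u$ onto the transverse directions. Since $T\leaf_z$ is $J_W$-invariant, the $\delbar$-equation for $u$ restricts to a homogeneous linear $\delbar$-type equation for $v$, so Aronszajn's theorem (the similarity principle, cf.\ \cite[\S 2.3]{MS:Curves}) forces $v\equiv 0$ wherever it vanishes on an open set. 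The set $\{p: u(p)\in \leaf_z\}$ is therefore both open and closed, hence equal to the whole connected domain; compactness of leaves plays no role.
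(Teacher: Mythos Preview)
Your proof is correct and follows essentially the same route as the paper's. The paper argues only the direction ``$c_{k,l}=0$ for $k<-1$ $\Rightarrow$ contained in a leaf'' explicitly (the converse being immediate from Lemma \ref{Lemma:LeafAsymptotics}), citing analytic continuation \cite[\S 2.2]{MS:Curves} without further detail; your write-up supplies the converse and unpacks the unique continuation step, but the underlying argument is the same.
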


\subsection{Holomorphic curves lie in leaves}
We seek to show that every holomorphic curve of contact homology type in $\completion{W}$ (or $\completion{\divSet}$) which is asymptotic to contractible orbits of the form $\gamma_{Y} \times \zeta, \zeta \in \Crit(f)$ is contained in a leaf of $\foliation_W$ (respectively, $\foliation_{\divSet}$). This will be a consequence of Lemma \ref{Lemma:LeafFourierCriterion}.

We first consider $u: \C \rightarrow \completion{W}$ and look at the positive end, applying the notation of the preceding subsection. Suppose that $u$ is not contained in a leaf of $\foliation_W$, meaning that the Fourier decomposition of $u_{\disk}$ is such that not all $c_{k,l}$ are zero for $k<-1$. Let $k_{0}<-1$ be the largest index for which there is a $c_{k_{0}, l} \neq 0$. We write
\begin{equation}\label{Eq:UdiskAsymptotics}
u_{\disk} = e^{-\epsilon p}(x_{0}, y_{0}) + e^{\eigenvalue_{k_{0}}p}\sum_{l}c_{k_{0},l}\eigenfunction_{k_{0}, l}(q) + u_{\disk, \hot}, \quad u_{\disk, \hot} = \sum_{k < k_{0},l}c_{k,l}e^{\eigenvalue_{k}p}\eigenfunction_{k}(q).
\end{equation}
where the domain of $u$ is $[s_{+}, \infty) \times \Circle_{q}$. Let $\leaf$ be the leaf of $\foliation$ passing through the point $\{ s_{+} \} \times Y \times \{ e^{-\epsilon s_+}(x_{0}, y_{0})\}$, so that where $\disk \subset \Sigma^{\le \epsilon}$ is centered about $\zeta \in \Crit(f)$ we have
\begin{equation*}
\leaf \cap ([s_{+}, \infty)_{s} \times Y \times \disk) = \left\{ (s, Y, e^{-\epsilon s}(x_{0}, y_{0}))\ :\ s \geq s_{+}\right\}.
\end{equation*}

Let $\widetilde{\Sigma} \simeq \R^{2}$ be the universal cover of $\completion{\Sigma}$ and let $\widetilde{\zeta}$ be a point in the cover over $\zeta \in \Sigma$. We can choose a lift of $\widetilde{\leaf}$ of $\leaf$ to $\completion{V} \times \widetilde{\Sigma}$ whose positive end converges to $\{ \infty\} \times Y \times \widetilde{\zeta}$ along its ideal boundary and a lift $\widetilde{u}$ of $u$ converging to the orbit $\widetilde{\gamma}_{\widetilde{\zeta}} = \gamma \times \widetilde{\zeta}$. Such lifts exist by the fact that the domain $\C$ of $u$ is simply connected and consideration of the inclusion $\pi_{1}(\leaf) \rightarrow \pi_{1}(\completion{W})$ being injective. The almost complex structure $J_{W}$ lifts to the cover so that $\widetilde{\leaf}$ is a holomorphic submanifold and $\widetilde{u}$ is a holomorphic map.

We can express the cover as $\leaf \times \R^{2}$ so that \be 
\item the $\R^{2}$ fiber agrees with the $\disk$ factor of $[s_{+}, \infty) \times Y \times \disk$ over the half-cylindrical end $[s_{+}, \infty) \times Y$ of $\completion{V}$ near $\leaf$ and
\item the map $\disk \rightarrow \R^{2}$ at $\{ s_{+} \} \times Y$ is affine with $(x_{0}, y_{0}) \in \disk$ is sent to $0 \in \R^{2}$.
\ee
Let $V_{\leq s_{+}}$ be the complement of this half-cylindrical end in $\leaf$. For $s_{+}$ large, the $c_{k_{0}}$ term in Equation \eqref{Eq:UdiskAsymptotics} dominates the $u_{\disk, \hot}$ term, due to the comparative exponential decay factors $e^{\eigenvalue_{k_0}p}$. So for $s_{+}$ large, we get a loop
\begin{equation*}
\begin{gathered}
L:\Circle_{q} \rightarrow \R^{2}\setminus \{0\}, \quad L(q) = u(s_{+},q)= e^{\eigenvalue_{k_{0}}s_{+}}\sum_{l}c_{k_{0},l}\eigenfunction_{k_{0},l}(q) + u_{\disk, \hot}\\
\wind(L) = \wind_{k_{0}} < 0.
\end{gathered}
\end{equation*}
This winding number computes exactly the intersection number of $\widetilde{u}$ restricted to $\C \setminus \{ p > s_{+}\}$ with $V_{\leq s_{+}} = V_{\leq s_{+}} \times \{0\} \subset \completion{V} \times \R^{2}$. This intersection number is well-defined because $\widetilde{u}$ and $V_{\leq s_{+}}$ are disjoint along their boundaries. Since $V_{\leq s_{+}}$ is a holomorphic submanifold and $u$ is holomorphic, this intersection number must be non-negative. This contradicts the fact that the $c_{k_{0},l}$ are not all zero. Therefore we have proved the lemma.

\begin{lemma}\label{Lemma:PlaneInLeaf}
For each holomorphic plane $u: \C \rightarrow \completion{W}$ positively asymptotic to an $R_{\divSet}$ orbit $\gamma_{\zeta}$ for $\zeta \in \Crit(f)$, the image of $u$ is contained in a leaf $\leaf$ of the foliation $\foliation_{W}$.
\end{lemma}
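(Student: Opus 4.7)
The plan is to invoke \Cref{Lemma:LeafFourierCriterion}, which reduces containment of $u$ in a leaf to a purely asymptotic condition: the Fourier coefficients $c_{k,l}$ in the expansion of $u_{\disk}$ at the positive end must vanish for all $k < -1$. I will argue by contradiction, assuming there is a largest $k_0 < -1$ with some $c_{k_0, l} \neq 0$, and use positivity of intersection for holomorphic curves to rule this out.

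First, I would select a distinguished leaf $\leaf \in \foliation_W$ to compare against: the leaf passing through the point $\{s_+\} \times Y \times \{e^{-\epsilon s_+}(x_0, y_0)\}$, where $(x_0, y_0)$ is read off from the leading $\eigenvalue_{-1}$ term of $u_{\disk}$. By \Cref{Lemma:LeafAsymptotics}, $\leaf$ intersects the cylindrical end in precisely the $\grad f$-flow line starting at $(x_0,y_0)$ and decaying like $e^{-\epsilon p}(x_0, y_0)$, so the ``deviation'' of $u_{\disk}$ from $\leaf$ is controlled by the $k_0$-mode, which has winding number $\wind_{k_0} < 0$ about $\zeta$.

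Next I would convert this winding count into an honest intersection number by passing to a cover. Since $\Sigma$ is non-simply connected, the universal cover $\widetilde{\Sigma} \cong \R^2$ of $\completion{\Sigma}$ unwraps $\gamma_{\zeta}$, and the splitting $\completion{W} = \completion{V} \times \completion{\Sigma}$ makes $\pi_1(\leaf) \hookrightarrow \pi_1(\completion{W})$ injective. As the domain $\C$ of $u$ is simply connected, I can choose simultaneous lifts $\widetilde{u}$, $\widetilde{\leaf}$ of $u$, $\leaf$ to $\completion{V} \times \widetilde{\Sigma}$, both asymptotic to a common lifted orbit $\widetilde{\gamma}_{\zeta}$. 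The almost complex structure $J_W$ lifts, so both $\widetilde{u}$ and $\widetilde{\leaf}$ are $J_W$-holomorphic in the cover. Identifying the lift as $\leaf \times \R^2$ with the $\R^2$-factor centered so that $(x_0, y_0) \mapsto 0$, the compact piece $V_{\le s_+} := \widetilde{\leaf} \cap \{p \le s_+\}$ is a $J_W$-holomorphic submanifold sitting at the origin of the $\R^2$-fiber.

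Then I would compute the algebraic intersection number of $\widetilde{u}|_{\{p \le s_+\}}$ with $V_{\le s_+}$. For $s_+$ large, the $k_0$-mode exponentially dominates all higher-order terms (since $\eigenvalue_k < \eigenvalue_{k_0}$ for $k<k_0$), so the boundary loop $L(q) := \widetilde{u}_{\disk}(s_+, q)$ takes values in $\R^2 \setminus \{0\}$ with winding number $\wind_{k_0}$; this loop computes the intersection number, which is therefore strictly negative. On the other hand, positivity of intersection for distinct $J_W$-holomorphic submanifolds forces this number to be non-negative, a contradiction. Hence all $c_{k,l}$ with $k < -1$ vanish, and \Cref{Lemma:LeafFourierCriterion} then implies $\im u \subset \leaf$.

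The main obstacle I anticipate is the bookkeeping around the lifts: one must verify that $\widetilde{u}$ and $V_{\le s_+}$ have \emph{disjoint} boundaries (so that the relative intersection number is well-defined), that the trivialization of the $\R^2$-fiber near $\leaf$ is compatible with the $\disk$-coordinates used in the Fourier expansion, and that the dominance of the $k_0$-mode over $u_{\disk,\hot}$ at $p = s_+$ is uniform in $q$. These are all standard but require care in choosing $s_+$ sufficiently large in terms of the fixed Fourier data. The analogous argument for curves in $\completion{\divSet}$ should then proceed verbatim, replacing $\foliation_W$ by $\foliation_{\divSet}$ and $\completion{W}$ by $\completion{\divSet}$.
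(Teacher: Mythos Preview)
Your proposal is correct and follows essentially the same argument as the paper: contradiction via \Cref{Lemma:LeafFourierCriterion}, selection of the leaf through $e^{-\epsilon s_+}(x_0,y_0)$, lifting both $u$ and $\leaf$ to $\completion{V}\times\widetilde{\Sigma}$, and deriving a negative intersection number from $\wind_{k_0}<0$ to contradict positivity of intersection. One small caveat on your closing remark: the argument for $\completion{\divSet}$ in \Cref{Lemma:CurveInLeaf} does \emph{not} proceed verbatim, since the curve has negative punctures; the paper first caps these off with disks in a fixed $\{s_-\}\times\divSet$ slice (using contractibility of the negative asymptotics) and then arranges that the comparison leaf $\leaf$ is a $\completion{V}$-leaf lying entirely above $s_-$, so that the non-holomorphic capping disks are disjoint from $\widetilde{\leaf}_{\le s_+}$ and positivity of intersection still applies.
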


A similar analysis applies to holomorphic curves in the symplectization $\completion{\divSet}$.

\begin{lemma}\label{Lemma:CurveInLeaf}
Let $u$ be a holomorphic curve in $\completion{\divSet}$ which is positively asymptotic to a $\gamma_{\zeta}$ for some $\zeta \in \Crit(f)$ and negatively asymptotic to some $\gamma_{i,\zeta_i} = \gamma_{i} \times \zeta_{i}$ for $\zeta_{i} \in \Crit(f)$ and Reeb orbits $\gamma_{i}$ of $Y$. If all the $\gamma_{i}$ are contractible in $\divSet$ then the image of $u$ is contained in a leaf of $\foliation_{\divSet}$.
\end{lemma}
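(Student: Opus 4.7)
We follow the template of Lemma \ref{Lemma:PlaneInLeaf}, adapted to the richer setting in which the domain of $u$ is a punctured Riemann surface $\dot{S}$ with possibly nontrivial genus and in which $u$ has additional negative punctures. Assume for contradiction that $u$ is not contained in any leaf of $\foliation_{\divSet}$. Applying Lemma \ref{Lemma:LeafFourierCriterion} at the positive puncture produces a nonzero Fourier coefficient $c_{k_0, l}$ for some $k_0 < -1$ in the asymptotic expansion of the $\disk$-component of $u$, with associated eigenfunctions of negative winding $\wind_{k_0} < 0$. The $k=-1$ leading terms of this expansion identify a centered asymptotic slice $(s_+, Y, e^{-\epsilon s_+}(x_0, y_0))$ for some large $s_+$, and we select a reference leaf $\leaf_0 \in \foliation_{\divSet}$ passing through this slice, exactly as in the construction in the proof of Lemma \ref{Lemma:PlaneInLeaf}. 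Then $\leaf_0$ and $u$ share the positive Reeb orbit asymptote $\gamma_{\zeta}$.

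To deploy positivity of intersections, I would pass to a cover $\widetilde{\completion{\divSet}}$ of $\completion{\divSet}$ on which $\leaf_0$ lifts to an embedded holomorphic submanifold $\widetilde{\leaf}_0$ (biholomorphic to $\completion{V}$ if $\leaf_0$ is a $\completion{V}$-leaf, or to $\completion{Y}$ otherwise). This is the symplectization analogue of the lift to $\completion{V} \times \widetilde{\Sigma}$ used in the proof of Lemma \ref{Lemma:PlaneInLeaf}. The hypothesis that each negative asymptote $\gamma_i$ of $u$ is contractible in $\divSet$ is precisely what is needed to lift $u$ consistently to $\widetilde{\completion{\divSet}}$: loops in $\dot{S}$ around negative punctures map to trivial elements of $\pi_1(\completion{\divSet})$, and together with the standard relation among boundary loops of a punctured surface this produces, possibly after passing to a further intermediate cover for any genus generators, a holomorphic lift $\widetilde{u}: \dot{S} \to \widetilde{\completion{\divSet}}$ whose positive end converges to the lift of $\gamma_\zeta$ specified by $\widetilde{\leaf}_0$.

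Next, I would compute the algebraic intersection of $\widetilde{u}|_{\dot{S}_K}$ with a compact sub-piece $V_{\leq s_+} \subset \widetilde{\leaf}_0$ corresponding to the compact core of the $\completion{V}$ parametrization of $\widetilde{\leaf}_0$, where $\dot{S}_K \subset \dot{S}$ is obtained by removing small half-cylindrical neighborhoods around each puncture. For $s_+$ sufficiently large and the neighborhoods sufficiently small, the boundary circles of $\dot{S}_K$ map into $\widetilde{\completion{\divSet}}$ disjoint from $V_{\leq s_+}$, so the intersection number is well-defined and equals the sum of boundary winding contributions. The boundary around the positive puncture contributes $\wind_{k_0} < 0$ by the identical computation as in Lemma \ref{Lemma:PlaneInLeaf}. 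The boundary around each negative puncture contributes $0$: since $\widetilde{\leaf}_0$ has only the positive cylindrical end, the compact piece $V_{\leq s_+}$ does not extend into the negative-$s$ region of the symplectization where $u$'s negative asymptotes concentrate, so these boundary circles can be arranged to map arbitrarily far from $V_{\leq s_+}$. The total is therefore strictly negative, contradicting non-negativity from positivity of intersections between the holomorphic $\widetilde{u}|_{\dot{S}_K}$ and $V_{\leq s_+}$.

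The main obstacle is the construction of the cover and the simultaneous lift of $u$ and $\leaf_0$, which is precisely where the contractibility hypothesis on the $\gamma_i$ enters. A secondary technical point is verifying that the compact sub-piece of $\widetilde{\leaf}_0$ can be chosen so that its interaction with $\widetilde{u}$ is concentrated at the positive puncture alone; this hinges on the fact established in Lemma \ref{lemma:foliation_symp} that leaves of $\foliation_{\divSet}$ have only one cylindrical end in the symplectization direction.
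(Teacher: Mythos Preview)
Your overall strategy---extract a negative-winding Fourier coefficient at the positive puncture, intersect with a leaf, and derive a contradiction via positivity of intersections in a cover---matches the paper's. However, the paper executes the key step differently and your version has two genuine gaps.

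First, you choose $\leaf_0$ through the exact asymptotic slice $e^{-\epsilon s_+}(x_0,y_0)$. The leaf through that slice may well be a $\completion{Y}$-leaf (for instance the constant one $\R_s\times Y\times\{\zeta\}$ when $(x_0,y_0)=0$). Your claim that ``leaves of $\foliation_\divSet$ have only one cylindrical end in the symplectization direction'' is false for $\completion{Y}$-leaves: they extend to $s\to -\infty$ as well, so no compact piece of such a leaf can be separated from the negative ends of $u$. The paper avoids this by perturbing to a nearby generic point $(x_1,y_1)$ so that the leaf is a $\completion{V}$-leaf, which \emph{is} contained in $(s_-,\infty)\times\divSet$.

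Second, and more importantly, your justification that the negative boundary circles contribute zero to the intersection number---``they can be arranged to map arbitrarily far from $V_{\leq s_+}$''---is not an argument: disjointness does not imply zero linking. What is actually needed is that each negative circle is null-homotopic in the complement of the leaf, and this is precisely where contractibility of the $\gamma_i$ in $\divSet$ enters, not in the construction of the lift. (The orbits $\gamma_i\times\{\zeta_i\}$ already have trivial $\pi_1(\Sigma)$-component, so lifting to $\completion{V}\times\widetilde{\Sigma}$ requires nothing.) The paper makes this transparent by capping each negative circle with a disk $D_i\subset\{s_-\}\times\divSet$, producing a topological disk $u_{s_\pm,D}$; since the chosen $\completion{V}$-leaf lives entirely in $(s_-,\infty)\times\divSet$, all intersections with the capped surface occur along its holomorphic part, and the single-boundary winding computation from Lemma~\ref{Lemma:PlaneInLeaf} applies verbatim. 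Your approach can be repaired, but the repair is exactly this capping argument.
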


\begin{proof}
Identify the cylindrical end of $\completion{W}$ with $[0,\infty)_{s}\times \divSet$. For each $s_{+} > 0$ the complement of $[s_{+}, \infty) \times \divSet$ in $\completion{W}$ is a compact symplectic manifold which we will call $W_{\leq s_{+}}$.

For constants $s_{\pm}$ of the form $s_{-} \ll 0 \ll s_{+}$ let $u_{s_{\pm}} = u \cap \left([s_{-}, s_{+}] \times \divSet\right)$. The $s_{-}$ boundary of $u_{s_{\pm}}$ is a collection of circles $\gamma_{i, -}$ in $\divSet$ which are as close as we like to the $\{-\infty\} \times \gamma_{i}$ by making $|s_{-}|$ large. Since the $\gamma_{i}$ are contractible, we can find a collection of disks $D_{i} \subset \{s_{-}\}\times \divSet$ which bound the $\gamma_{i, -}$. Let $u_{s_{\pm},D}$ be the union of $u_{s_{\pm}}$ with the $D_{i}$, which is topologically a disk.

As in the study of holomorphic planes in $\completion{W}$, consider the lift $\widetilde{u}_{s_{\pm}, D}$ of $u_{s_{\pm}, D}$ to $\completion{V} \times \widetilde{\Sigma}$, by viewing $[s_{-},s_{+}] \times \divSet$ as being contained in $\completion{W}$. Let $\leaf$ be a $\completion{V}$ leaf of $\foliation_{\divSet}$ positively asymptotic to $\{\infty\} \times Y \times \{ \zeta\}$, isomorphic to $\completion{V}$, and being completely contained in $(s_{-}, \infty) \times \divSet$. Write $\leaf_{\leq s_{+}} = \leaf \cap [s_{-}, s_{+}]\times \divSet$. Then we can similarly lift $\leaf_{\leq s_{+}}$ to a $\widetilde{\leaf}_{\leq s_{+}} \subset \completion{V}\times \widetilde{\Sigma}_{\leq s_{0}}$. As in the case of holomorphic planes, we require the lifts $\widetilde{u}_{s_{-},s_{+}, D}$ and $\widetilde{\leaf}_{\leq s_{+}}$ to have positive asymptotics tending to the same $\{ \infty \} \times Y \times \{ \widetilde{\zeta}\}$ as $s_{+} \rightarrow \infty$.

Now we study intersections of $\widetilde{u}_{s_{\pm}, D}$ and $\widetilde{\leaf}_{\leq s_{+}}$ for $s_{+} \gg 0$ and with $\leaf$ varying, seeking to show that all of the $c_{k,l}=0$ whenever $k < -1$ in the Fourier expansion of the positive end of $u$. As in our previous analysis, suppose that $u$ is not contained in a leaf of $\foliation_{\divSet}$ and take the maximal $k_{0} < -1$ for which $\sum_{l} c_{k_{0}}\eigenfunction_{k_{0}, l} \neq 0$. We are then again in the situation of Equation \eqref{Eq:UdiskAsymptotics}. As $\sum_{l} c_{k_{0},l}\zeta_{k_{0}, l}(q)$ is never zero and $u_{\disk, \hot}$ is negligible, we can find a $(x_{1}, y_{1}) \in \disk$ as close as we want to $e^{-\epsilon s_{0}}(x_{0}, y_{0})$, for which
\be
\item $(x_{1}, y_{1})$ and $e^{-\epsilon s_{+}}(x_{0}, y_{0})$ are in the same connected component of $\disk \setminus \pi_{\disk}(\text{$s_1$-boundary of $u_{s_{\pm}}$})$, and
\item $\leaf$ contains $\{s_{+}\} \times Y \times \{(x_{1},y_{1})\}$.  Since $\completion{V}$-leaves are dense, we can pick a generic $(x_1,y_1)$, such that $\leaf$ is a $\completion{V}$-leaf.
\ee
In other words, the loop $L(q)$ winds around the point $(x_{1}, y_{1}) \in \disk$ a total of $\wind_{k_{0}}<0$ times which computes the intersection number of $\widetilde{u}_{s_{\pm}, D}$ and $\widetilde{\leaf}_{s_{+}}$. As before this intersection number is $\wind_{k_{0}}$. 
So the intersection number is $\wind_{k_{0}}<0$ and $\widetilde{\leaf}_{s_{1}}$ only intersects $\widetilde{u}_{s_{\pm}, D}$ where $\widetilde{u}_{s_{\pm}, D}$ is holomorphic, since $\leaf_{\leq s_{+}}$ lives in the collar $[s_{-}, s_{+}] \times \divSet$, while the $D_{i}$ are contained in $W_{s_{+}}$.

This intersection number cannot be negative, yielding a contradiction. We conclude that only the $k=-1$ Fourier coefficient of $u$ can be non-zero. Hence $u$ must be tangent to a leaf of $\foliation_{\divSet}$ by Lemma \ref{Lemma:LeafFourierCriterion}.
\end{proof}

\begin{rmk}
    Alternatively, \Cref{Lemma:PlaneInLeaf,Lemma:CurveInLeaf} can be proved using the Siefring intersection theory considered by Moreno and Siefring in the presence of a holomorphic foliation \cite{moreno2019holomorphic}. Moreover, precisely using the natural framing from $\Sigma$ to push off period orbits $\gamma_{\zeta}$, the linking number of the push-off of $\gamma_{\zeta}$ with any leaf is zero, which is well-defined for contractible orbits in the case we consider. Now since the normal Conley-Zehnder indices, as explained in \S \ref{Sec:Asymptotics} is $\ind_{\Morse}(f, \zeta)-1=0,1$. The intersection number \cite[\S 2]{moreno2019holomorphic} (or see \cite[Definition 5.1]{zbMATH07821993}) $u\ast \leaf$ with any leaf is $0$, then by \cite[ Corollary 2.3 and Theorem 2.5]{moreno2019holomorphic}, $u$ must be contained in a leaf.
\end{rmk}

\section{Fredholm theorem for curves in leaves}\label{Sec:Fredholm}

We quickly work out the basic Fredholm theory for holomorphic curves in $\completion{W}$ and $\completion{\divSet}$. The moduli spaces containing these curves will be more fully described in \S \ref{Sec:ModSpaceDefs}.

Let $u: \C \rightarrow \completion{W}$ be a holomorphic plane asymptotic to a $\orbit_{\zeta} = \orbit \times \{\zeta\} \subset Y \times \Sigma$ for a $\zeta \in \Crit(f)$. By Lemma \ref{Lemma:PlaneInLeaf} we can express $u$ as
\begin{equation*}
    u = \leafInclusion_{z} \circ u_{\completion{V}}, \quad z\in \completion{\Sigma}, \quad u_{\completion{V}}: \C \rightarrow \completion{V}
\end{equation*}
where $\leafInclusion_{z}: \completion{V} \rightarrow \completion{W}$ is the leaf inclusion map of Lemma \ref{Lemma:FoliationProperties} with $u_{\completion{V}}$ holomorphic and positively asymptotic to $\gamma$. Therefore $\im u \subset \leaf_{z} = \im \leafInclusion_{z}$, a leaf which is positively asymptotic to $\{ \infty\} \times Y \times \{\zeta\}$ in the sense of Lemma \ref{Lemma:LeafAsymptotics}.

Write $T\foliation_{W} \subset T\completion{W}$ for the union of the tangent spaces of the leaves of $\foliation_{W}$. If $u_{\completion{V}}$ is any -- not-necessarily holomorphic -- map of a plane into $\completion{V}$ then we have a commutative diagram for which the vertical maps are isomorphisms defining an operator $\Dlinearized_{\foliation}$,
\begin{equation*}
\begin{tikzcd}
\Omega^{0}\left(u^*_{\completion{V}}T\completion{V}\right) \arrow[d]\arrow[r, "\Dlinearized_{u_{\completion{V}}}"] & \Omega^{0, 1}\left(u^*_{\completion{V}}T\completion{V}\right) \arrow[d]\\
\Omega^{0}\left((\leafInclusion_{z}\circ u_{\completion{V}})^{\ast}T\foliation_{W}\right) \arrow[r, dashed, "\Dlinearized_{\foliation}"] & \Omega^{0, 1}\left((\leafInclusion_{z}\circ u_{\completion{V}})^{\ast}T\foliation_{W}\right).
\end{tikzcd}
\end{equation*}
The integrability of $T\foliation_{W}$ implies that in fact
\begin{equation*}
\Dlinearized_{\foliation}: \Omega^{0}_{\foliation} \rightarrow \Omega^{0, 1}_{\foliation}, \quad \Omega^{\ast}_{\foliation} = \Omega^{\ast}(u^{\ast}T\foliation_{W})
\end{equation*}
is well-defined for any $u: \C \rightarrow \completion{W}$, even when $\im u$ is not contained in a leaf of $\foliation_{W}$.

Let $T\foliation^{\perp}_{W} \subset T\completion{W}|_{\leaf_{z}}$ be a complement to $T\foliation_W$. This is a trivial $\R^{2}$ bundle. Indeed, since $\completion{\Sigma}$ is an open Riemann surface, $T\completion{\Sigma}$ admits a global trivialization. Given $\partial_{x}, \partial_{y}$ spanning $T_{z}\completion{\Sigma}$ we could choose $T\foliation^{\perp}_{W}|_{\leaf_{z}}$ to be spanned by nowhere vanishing sections $\partial_{x}\leafInclusion(z,v), \partial_{x}\leafInclusion(z,v)$, so a choice of trivialization of $T\completion{\Sigma}$ induces a global trivialization of $T\foliation_{W}^{\perp}$. We will assume that a trivialization is chosen so we are identifying $\R^{2}$ with $\Span(\partial_{x}, \partial_{y})$ along $[s_{0}, \infty)\times Y \times \Sigma^{\le \epsilon}$ for $s_{0} \gg 0$ and where the $xy$-coordinates are on the $\disk \subset \Sigma$ centered about $\zeta$ described in \S \ref{Sec:DetailedDynamics}. We also assume that $T\foliation^{\perp}_{W}$ is preserved by our almost complex structure $J_{W}$.

At a given map $u: \C \rightarrow \completion{W}$, we can split sections of $u^{\ast}T\completion{W}$ into $u^{\ast}T\foliation_{W}$ and $u^{\ast}T\foliation_{W}^{\perp}$ summands. Therefore the linearization $\Dlinearized_{u}$ of $u$ can be written as a triangular block matrix
\begin{equation}\label{Eq:DBlockMatrix}
\Dlinearized_{u} = \begin{pmatrix}
    \Dlinearized_{\foliation} & \Dlinearized_{ur}\\
    0 & \Dlinearized_{\perp}
\end{pmatrix} \implies \ind \Dlinearized_{u} = \ind \Dlinearized_{\foliation} + \ind \Dlinearized_{\perp}
\end{equation}
with $\Dlinearized_{ur}=0$ if the image of $u$ is contained in a leaf. Here $\Dlinearized_{\foliation}$ is as above and $\Dlinearized_{\perp}$ is defined
\begin{equation*}
\Dlinearized_{\perp}: \Omega^{0}_{\perp} \rightarrow \Omega^{0, 1}_{\perp}, \quad \Omega^{\ast}_{\perp} = \Omega^{\ast}(u^{\ast}T\foliation_{W}^{\perp})
\end{equation*}
That $\Dlinearized_{u}$ is upper-triangular follows from the parametrization of the foliation in \Cref{Lemma:FoliationProperties}. The upper-right term $\Dlinearized_{ur}$ captures the variation of $J_{W}$ on the normal directions, which preserves the leaf tangent space.

The linearized operator $\Dlinearized_{\perp}$ is a real linear Cauchy-Riemann operator on a trivial complex line bundle over the domain $\C$ of $u$ and is exactly as studied in \cite[Proposition 2.2]{AutomaticTransversality}. Since $\ind_{\Morse}(f, \zeta)$ computes the Conley-Zhender index at the positive end of $\Dlinearized_{\perp}$ which is described by Equation \eqref{Eq:NormalAsymptoticOp}, and this Morse index is $\geq 1$ by the assumptions of \S \ref{Sec:DetailedDynamics}, \cite{AutomaticTransversality} tells us that $\Dlinearized_{\perp}$ is always surjective, having
\begin{equation}\label{Eq:NormalPlaneAutomaticTransversality}
\dim \ker \Dlinearized_{\perp} = \ind \Dlinearized_{\perp} = \ind_{\Morse}(f, \zeta) \geq 1.
\end{equation}

By the triangular form of $\Dlinearized_{u}$ and the surjectivity of $\Dlinearized_{\perp}$ it follows that $\Dlinearized_{u}$ is surjective iff $\Dlinearized_{\foliation}$ is surjective. So assuming transversality for the $\Dlinearized_{\foliation}$, there can be no rigid ($\ind=0$) holomorphic planes positively asymptotic to $\orbit_{\zeta}$ and we would have $\aug_W(\orbit_{\zeta}) = 0$. However the surjectivity for $\Dlinearized_{\foliation}$ cannot be assumed even for generic $J_{V}$ on $\completion{V}$ due to the usual transversality problems of SFT such as multiple covers.

We will therefore have to consider perturbed holomorphic curves consisting of gluings of multi-level buildings as in \cite{BH}. This necessitates consideration of holomorphic curves $u$ in the symplectization $\completion{\divSet}$ of the ideal boundary $\divSet$ of $\completion{W} = \completion{V} \times \completion{\Sigma}$.

In order that such $u$ in $\completion{\divSet}$ fits together into a building whose topological type is a plane, all of the ends of $u$ are asymptotic to contractible orbits, satisfying the hypotheses of Lemma \ref{Lemma:CurveInLeaf}. Then $u$ is contained in a leaf of the holomorphic foliation $\foliation_{\divSet}$ and we can again write
\begin{equation*}
u = \leafInclusion_{\leaf} \circ u_{\leaf}
\end{equation*}
for a leaf inclusion map $\leafInclusion_{\leaf}: \leaf \rightarrow  \completion{\divSet}$ of some leaf $\leaf$ of $\foliation_{\divSet}$ and a holomorphic map $u_{\leaf}$ of the domain of $u$ into $\leaf$. The linearized operator for $u$ again has a triangular form as in Equation \eqref{Eq:DBlockMatrix} with $\Dlinearized_{\foliation}$ the linearization of $u_{\leaf}$ and $\Dlinearized_{\perp}$ the linearization of the normal bundle. By the characterization of leaves in \Cref{lemma:foliation_symp}, such a $u$ is positively asymptotic to some $\orbit_{\zeta} = \gamma \times \{\zeta\}$ and negatively asymptotic to some $\orbit_{\zeta, i} = \orbit_{i} \times \{ \zeta_{-}\}, i=1,\dots,m_{-}$ with $\zeta, \zeta_{-}\in \Crit(f) \subset \Sigma^{\le \epsilon}$. Therefore
\begin{equation}\label{Eq:NormalNotPlaneIndex}
\ind \Dlinearized_{\perp} = \ind_{\Morse}(f, \zeta) - m_{-}\ind_{\Morse}(f, \zeta_{-}).
\end{equation}
From \Cref{lemma:foliation_symp}, the specific cases are as follows:
\be
\item If $\leaf$ is a $\completion{V}$ leaf then there are no negative asymptotics, $u$ is a plane and we are in the situation of Equation \eqref{Eq:NormalPlaneAutomaticTransversality}, so that automatic transversality applies to $\Dlinearized_{\perp}$.
\item If $\leaf$ is a flow-line leaf of the form $\R_{s} \times Y \times \{\zeta\}$, meaning $\zeta=\zeta_{-}$, then $\ind \Dlinearized_{\perp} = (1-m_{-})\ind_{\Morse}(\zeta)$.
\item If $\leaf$ is a flow-line leaf positively asymptotic to $Y\times \{\zeta_{2}\}$ where $\zeta_{2} \in \Sigma^{\le \epsilon}$ is the unique critical point of $\ind_{\Morse}=2$ and $\zeta_{-}$ is a one of the $\zeta_{1, k}$ of $\ind_{\Morse}=1$, then $\ind \Dlinearized_{\perp} = 2-m_{-}$
\ee
So it is clear that even when $\Dlinearized_{\foliation}$ is surjective, $\ind \Dlinearized_{\perp}$ can be negative, implying that $\Dlinearized_{u}$ will often not be surjective. This entails that the $\ind=0$ holomorphic buildings we seek to count will in general be composed of levels having indices of arbitrary $\ind$.

\section{Vanishing of $\aug_{W}$}\label{Sec:VanishingAug}

\begin{theorem}\label{thm:no_augmentation}
Let $\Sigma$ be a surface with boundary that is not disk and $V$ a Liouville domain. Then there exists a sequence of contact forms $\alpha_{\divSet, k}$ on $\divSet:=\partial(\Sigma\times V)$ and $\alpha_{\divSet,k}$-tame almost complex structures $J_{W, k}$ on $\completion{W}=\completion{V} \times \completion{\Sigma}$, and choices of perturbations such that the following conditions hold:
\be
\item The $\alpha_{\divSet, k}$ are obtained by $\beta_{W}$ restricted to contact hypersurfaces $\divSet$ in $W$ as in \S \ref{Sec:Foliation} which converge in $\mathcal{C}^{1}$\footnote{It also suffices to work with a decreasing sequence of contact forms $\alpha_{\divSet,k}$.}. The $J_{W, k}$ are also as described in \S \ref{Sec:Foliation}.
\item There are period thresholds $L_{k} \rightarrow \infty$ such that the augmentation $\aug_{W}$ sending closed Reeb orbits of period up to $L_k$ in $(\divSet,\alpha_{\divSet,k})$ to $\Q$ by counting perturbed holomorphic planes in $\completion{W}$ using the $J_{W, k}$ is zero.
\ee
\end{theorem}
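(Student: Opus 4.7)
The plan is to combine the holomorphic foliation $\foliation_W$ of \Cref{Lemma:FoliationProperties} with the Fredholm index splitting of \S\ref{Sec:Fredholm} and the Bao--Honda Kuranishi scheme, in order to show that no rigid perturbed plane (possibly obtained by gluing a multi-level building) can contribute to $\aug_W(\orbit)$ for any generator $\orbit$ of period $\leq L_k$.

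First, I would use \Cref{Lemma:ContractibleOrbitLocation} to reduce the work. Since $\Sigma$ is not a disk, every closed orbit of $R_{\divSet,k}$ that is contractible in $\divSet$ lies in $\divSet_{\Sigma^{\leq \epsilon}}$, and by the Morse model of \S\ref{Sec:DetailedDynamics} must take the form $\orbit_\zeta = \orbit_Y \times \zeta$ with $\orbit_Y$ a closed Reeb orbit of $Y$ and $\zeta \in \Crit(f_\Sigma)$. Picking a sequence $\epsilon_k,\delta_k \to 0$ and $L_k \to \infty$ produces the required contact forms and thresholds, with $\alpha_{\divSet,k}$ converging in $\Cinfty$ and the $J_{W,k}$ specified as in \Cref{Lemma:FoliationProperties} (with generic $J_V$ on $\completion V$). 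Non-contractible orbits carry no holomorphic planes at all, so $\aug_W$ vanishes on them tautologically.

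Second, by \Cref{Lemma:PlaneInLeaf} every $J_{W,k}$-holomorphic plane in $\completion W$ asymptotic to $\orbit_\zeta$ lies in a leaf of $\foliation_W$, and by \Cref{Lemma:CurveInLeaf} every curve in $\completion{\divSet}$ whose ends are all contractible lies in a leaf of $\foliation_\divSet$. Consequently every holomorphic building appearing in the SFT compactification of the moduli space used to define $\aug_W(\orbit_\zeta)$ is contained levelwise in the respective foliation. Combined with the block-triangular decomposition of the linearization in \S\ref{Sec:Fredholm}, this exhibits the unperturbed moduli space of planes as fibered over the moduli space of planes in $\completion V$ asymptotic to $\orbit_Y$, with fiber dimension $\ind \Dlinearized_\perp = \ind_\Morse(f,\zeta) \geq 1$ and with $\Dlinearized_\perp$ automatically surjective by \eqref{Eq:NormalPlaneAutomaticTransversality}. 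Hence for a rigid contribution to $\aug_W(\orbit_\zeta)$ the leaf-direction index would have to equal $-\ind_\Morse(f,\zeta) \leq -1$, which precludes unperturbed rigid planes for generic $J_V$; an analogous count via \eqref{Eq:NormalNotPlaneIndex} precludes them on all levels of a building.

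Third, I would execute the Bao--Honda perturbation scheme in coordinates adapted to the splitting $TW = T\foliation_W \oplus T\foliation_W^\perp$ (and analogously on $\completion{\divSet}$). In a Kuranishi neighborhood of a building one chooses obstruction bundles and multivalued perturbations compatible with this splitting, so that the perturbed zero sets inherit the leaf-fibration structure and the normal factor of dimension $\ind_\Morse(f,\zeta) \geq 1$ persists. Every perturbed moduli space contributing to $\aug_W(\orbit_\zeta)$ then carries strictly positive virtual dimension, so its signed weighted count is zero, giving $\aug_W(\orbit_\zeta) = 0$.

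The main obstacle is the last step: one must construct Kuranishi data compatible with $\foliation_W$ and $\foliation_\divSet$ \emph{simultaneously across all levels} of every SFT building in the compactification, and verify that gluing preserves the fibered structure so that the positive excess normal dimension survives at the boundary strata. In particular, care is needed so that the chosen perturbations of the obstruction bundles do not mix leaf and normal directions in a way that could produce spurious isolated perturbed solutions, and so that the resulting perturbation data extends coherently over the compactified moduli space in the sense required by \cite{BH}.
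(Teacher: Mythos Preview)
Your approach is essentially the same as the paper's, and your identification of the main obstacle is accurate. The paper resolves that obstacle by a device you do not mention, so let me describe it.

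You propose to choose perturbations so that the perturbed zero sets inherit the leaf-fibration structure on every level of every building. This cannot work directly: for curves in $\completion{\divSet}$ with $m_-\geq 1$ negative punctures, Equation~\eqref{Eq:NormalNotPlaneIndex} shows $\Dlinearized_\perp$ can have negative index and is not surjective, so transversality on those levels forces nonzero $\transSubbundle_\perp$-valued perturbations, destroying the fibration. The paper instead allows arbitrary $\multisec_{\divSet,\perp}$ on the symplectization levels, but takes the $\transSubbundle_\foliation$-valued multisections on \emph{all} levels to be pulled back from fixed Kuranishi data on $\completion{V}$ and $\completion{Y}$ via the leaf inclusions $\leafInclusion$. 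After gluing over a tree $\tree$, one obtains $\multisec^{\tree}=\multisec^{\tree}_\foliation\oplus\multisec^{\tree}_\perp$ on $\im\Glue^{\tree}_{\completion W}\subset\Map_{\completion W}$.

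The key step is then a one-parameter interpolation $\multisec^{\tree,T}$, $T\in[0,1]$, with $\multisec^{\tree,1}_\perp=0$. At $T=1$ any solution has $\delbar u\in\transSubbundle^{\tree}_\foliation$, which by Lemma~\ref{Lemma:GluingLeafInclusion} forces $u=\leafInclusion_z\circ u_{\completion V}$ to lie in a leaf; then $u_{\completion V}$ is a perturbed plane in $\completion V$ of index $-\ind_\Morse(f,\zeta)\leq -1$, contradicting the transversality of the $\completion V$-data. So the $T=1$ solution set is empty. Crucially, transversality at $T=1$ holds even with $\multisec^{\tree,1}_\perp=0$ because $\Dlinearized_\perp$ is automatically surjective \emph{for the glued plane} in $\completion W$ (Equation~\eqref{Eq:NormalPlaneAutomaticTransversality}), regardless of its failure on the individual symplectization pieces. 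Emptiness at $T=1$ then gives emptiness at $T=0$ for $\multisec_\perp$ sufficiently small, and hence $\aug_W(\orbit_\zeta)=0$.
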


As mentioned above, the theorem applies even when moduli spaces of holomorphic planes $u_{\leaf_{W}}:\C \rightarrow \leaf$ whose targets are the leaves $\leaf$ of the foliation $\foliation_{W}$ are not transversely cut out.

We will use the Kuranishi framework of \cite{BH}, summarized in \cite[\S 10]{Avdek:Hypersurface}. The problem of counting holomorphic planes asymptotic to $\orbit$ in the present context is technically similar to the counting of holomorphic planes contributing to the contact homology differential for neighborhoods of convex hypersurfaces in \cite{Avdek:Hypersurface}. As in \cite{Avdek:Hypersurface}, counting of holomorphic curves in the symplectization $\completion{\divSet}$ contributing to the contact homology differential for $M$ should in general require obstruction bundle gluing calculations, but that is beyond the scope of the present text.

To compute $\aug_{W}$, we need to count perturbed SFT buildings of index $0$, whose components are given by maps into the leaves of $\foliation_{\divSet}$ and $\foliation_{W}$, and which become a plane when glued. Heuristically, the buildings can be thought of as buildings in $\completion{V}$ mapped into $\completion{W}$ by the leaves. Note that the normal linearized operators $\Dlinearized_{\perp}$ will have a total index of $1$ or $2$ depending on the critical point $\zeta \in \Sigma$, when the positive orbit is a $\orbit_{\zeta}$. The leaf linearized operators $\Dlinearized_{\foliation}$ must then have a total index of $-1$ or $-2$. We will show that there is a Kuranishi perturbation scheme for which $\aug_{W}\orbit_{\zeta}=0$ for all such $\orbit_{\zeta}$ as a consequence of $\ind \Dlinearized_{\foliation}=-1,-2$. Note that we are in a rather clean situation, as the foliation is parameterized in a transparent way. The following subsections formalize this argument. 

\subsection{Action bounds and contractible orbits}
Computing $CH$ using the framework of \cite{BH} requires that we consider sequences of action bounds $L_{k} \rightarrow \infty$ and closed Reeb orbits of action $\leq L_{k}$ associated to contact forms $\alpha_{\divSet, k}$ which $\mathcal{C}^{1}$-converge as $m\rightarrow \infty$. Using this (as well as additional geometric data such as almost complex structures and perturbation data), we define and compute $CH^{\leq L_{k}}$. Finally, we take a direct limit to define $CH$. Throughout the remainder of the section, the $k$ subscript will be ignored.

For the purpose of establishing that Bourgeois contact manifolds are tight, we consider the contact homology of the neighborhood $\R \times \Circle \times M$ of the Bourgeois convex hypersurface $\hypersurface = \Circle \times M$. According to the construction of contact forms in \cite{Avdek:Hypersurface} used to compute the contact homology of a neighborhood of a convex hypersurface, all orbits we must consider are contained in the diving set, which in the present context will be $\divSet = \partial W =\partial (V\times \Sigma)$ with $\Sigma$ is an annulus. Using the Reeb vector field $R_{\divSet}$ on $\divSet$ -- which is determined by the functions $f_{V}$ and $f=f_{\Sigma}$ -- all contractible Reeb orbits (which could contribute to augmentations for the fillings of $\divSet$ associated to the positive and negative regions of the convex hypersurface) are contained in the region $Y \times \Sigma^{\le \epsilon}$: Outside this region the $\partial_{q}$ summand of $R_{\divSet}$ is non-zero, so all Reeb orbits are non-contractible.

By considering sequences of functions $f$ which $\mathcal{C}^{1}$-converge to $1$ on $\Sigma^{\le \epsilon}$, each satisfying the assumptions of \S \ref{Sec:DetailedDynamics}, we can assume that for a given $L > 0$ all contractible Reeb orbits of action $\leq L$ are of the form $\orbit_{\zeta} = \orbit\times \zeta$ for some $\zeta \in \Crit(f)$ and $\gamma$ is a closed, contractible orbit of $R_{Y}$, the Reeb vector field of the contact form $\alpha_{Y}$ on $Y$.

\begin{notation}
For the remainder of this section, all Reeb orbits considered will be contractible in $\hypersurface = \Circle \times M$ with action bounded by some (fixed, but unspecified) $L > 0$.
\end{notation}

Let $CC(\divSet)$ be the algebra of good Reeb orbits in $\divSet$ of action $\leq L$ with contact homology differential $\partial_{\divSet}$. To verify that $CH(\R \times \Circle \times M) \neq 0$ in \S \ref{Sec:NonVanishing} using the Algebraic Giroux Criteria of \cite{Avdek:Hypersurface}, we study the augmentation $\aug_{W}: (CC(\divSet), \partial_{\divSet}) \rightarrow (\Q, \partial_{\Q}=0)$ determined by the filling $W$ in the case when $\Sigma$ is an annulus.

\subsection{Moduli spaces}\label{Sec:ModSpaceDefs}

We describe the moduli spaces relevant to the computation of the augmentation $\aug_{W}$ of $CC(\divSet)$. Throughout we write $\orbit$ for a single orbit in $Y$ and $\vec{\orbit} = (\orbit_{1}, \dots, \orbit_{k})$ for ordered collections of orbits in $Y$. For $\zeta \in \Crit(f)$, write $\orbit_{\zeta}$ for a single orbit in $\divSet$ of the form $\orbit \times \zeta$ and $\vec{\orbit}_{\zeta}$ for ordered collections or orbits in $\divSet$ of the form $(\orbit_{1}\times \zeta, \dots, \orbit_{k}\times \zeta)$. So in the latter case all of the $\orbit_{i} \times \zeta$ are contained in the same $Y \times \{\zeta\} \subset Y \times \Sigma^{\le \epsilon} \subset \divSet$. We allow these ordered collections to be empty.

Our moduli spaces will be denoted as follows:
\be
\item $\ModSpace_{\completion{V}}(\orbit)$ is the moduli space of $J_{V}$-holomorphic planes in $\completion{V}$, positively asymptotic to $\orbit$.
\item $\ModSpace_{Y}(\orbit, \vec{\orbit})$ is the moduli space of $J_{\divSet}$-holomorphic curves in $\completion{ 
Y}$, positively asymptotic to $\orbit$ and negatively asymptotic to the $\orbit_{i}$.
\item $\ModSpace_{\completion{W}}(\orbit_{\zeta})$ is the moduli space of $J_{W}$-holomorphic planes in $\completion{W}$, positively asymptotic to $\orbit_{\zeta}$.
\item $\ModSpace_{\divSet}(\orbit_{\zeta}, (\orbit_{1}\times \zeta_{1}, \cdots, \orbit_{k}\times \zeta_{k}))$ is the moduli space of $J_{\divSet}$-holomorphic curves in $\completion {\divSet}$, positively asymptotic to $\orbit \times \zeta$ and negatively asymptotic to the $\orbit_{i} \times \zeta_{i}$.
\ee

According to Lemma \ref{Lemma:CurveInLeaf} and the characterization of leaves of $\foliation_{\divSet}$ in \Cref{lemma:foliation_symp}, the $\ModSpace_{\divSet}$ with are empty unless they are of the form $\ModSpace_{\divSet}(\orbit_{\zeta}, \vec{\orbit}_{\zeta_{-}})$ with either
\be
\item $\vec{\orbit}_{\zeta_{-}}\neq\emptyset$ and $\zeta = \zeta_{-}$, whence the elements of $\ModSpace_{\divSet}(\orbit_{\zeta}, \vec{\orbit}_{\zeta})$ are mapped to $\R_{s} \times Y \times \{\zeta\}$,
\item $\vec{\orbit}_{\zeta_{-}}\neq \emptyset$, $\zeta = \zeta_{2}$, and $\zeta_{-}=\zeta_{1, k}$, here the elements of $\ModSpace_{\divSet}(\orbit_{\zeta_{2}}, \vec{\orbit}_{\zeta_{1, k}})$ are mapped into flow line leaves $\leaf_{\upsilon}$ associated to $\grad f$ flow lines $\upsilon$ in $\Sigma^{\le \epsilon}$ negatively asymptotic to one of the $\ind_{\Morse}=1$ critical points $\zeta_{1, k}$ of $f$,
\item $\vec{\orbit}_{\zeta_{-}}=\emptyset$, whence the moduli space consists of holomorphic planes.
\ee
In the first and second cases, let $\upsilon$ be a parameterized $\grad f$-flow flow positively asymptotic to $\zeta$ and negatively asymptotic to $\zeta_{-}$. Possibly after applying a $\R_{s}$ translation on the target, each $u \in \ModSpace_{\divSet}(\orbit_{\zeta}, \vec{\orbit}_{Y} \times \zeta_{-})$ is mapped to the image of the $(J_{Y}, J_{\divSet})$-holomorphic embedding $\leafInclusion_{\upsilon}$. Therefore the map
\begin{equation*}
\leafInclusion_{\upsilon, \ModSpace}: \ModSpace_{Y}(\orbit, \vec{\orbit}_) \rightarrow \ModSpace_{\divSet}(\orbit_{\zeta}, \vec{\orbit}_{\zeta}), \quad u \mapsto \leafInclusion_{\upsilon}\circ u
\end{equation*}
determines a homeomorphism on the $\R_{s}$-reduced spaces
\begin{equation*}
\leafInclusion_{\upsilon, \ModSpace/\R}: \ModSpace_{Y}(\orbit, \vec{\orbit})/\R_{s} \rightarrow \ModSpace_{\divSet}(\orbit_{\zeta}, \vec{\orbit}_{\zeta})/\R_{s}.
\end{equation*}
Observe that if $\upsilon$ and $\upsilon'$ are two $\grad f$ flow lines which differ by a reparameterization $\upsilon'(s)=\upsilon(s+ s_{0})$, then $\leafInclusion_{\upsilon, \ModSpace/\R} = \leafInclusion_{\upsilon', \ModSpace/\R}$. In the third case above (in which we are considering a $\ModSpace_{\divSet}$ moduli space of planes), the planes can be viewed as living in $\completion{W}$.

To compute $\aug_{W}(\gamma_{\zeta})$ we will count gluings of holomorphic buildings solving a perturbed $\delbar$ equation which are topologically planes in $\completion{W}$. Following \cite{BH, Pardon:Contact}, such buildings can be organized as elements of spaces of trees $\tree$, with moduli spaces assigned to vertices and gluing data (common Reeb orbit asymptotics) associated to edges. In order that these assignments will correspond to planes the following restrictions are imposed:
\be
\item Moduli spaces of the form $\ModSpace_{\completion{W}}(\orbit_{\zeta})$ and $\ModSpace_{\divSet}(\orbit_{\zeta}, \emptyset)$ are assigned to ``leaf vertices'', having no outgoing edges.
\item Moduli spaces of the form $\ModSpace_{\divSet}/\R_{s}$ with non-empty collections of negative asymptotics are assigned to each vertex with at least one outgoing edge. As previously mentioned, these are the images of the $\leafInclusion_{\upsilon, \ModSpace/\R}$ for a finite collection of $\grad f$ flow lines $\upsilon$.
\item Closed orbits assigned to edges must be contractible (since they are to be filled in by planes), and hence are of the form $\orbit_{\zeta} = \orbit \times \zeta$, for $\zeta \in \Crit{f}$
\ee
See \S \ref{Sec:TreeCharts} for further details, where moduli spaces will be replaced with thickened moduli spaces, solving perturbed $\delbar$ equations.

\subsection{Manifolds of maps}

Our convention is that moduli spaces are of parameterized maps with removable punctures defined as in \cite{Avdek:Hypersurface, BH}. This allows us to write
\begin{equation*}
\begin{gathered}
\ModSpace_{Y}(\orbit, \vec{\orbit}) \subset \Map_{\completion{Y}}(\orbit, \vec{\orbit}), \quad \ModSpace_{\completion{V}}(\orbit) \subset \Map_{\completion{V}}(\orbit)\\
\ModSpace_{\divSet}(\orbit_{\zeta}, \vec{\orbit}_{\zeta_{-}}) \subset \Map_{\completion{\divSet}}(\orbit_{\zeta}, \vec{\orbit}_{\zeta_{-}}), \quad \ModSpace_{\completion{W}}(\orbit) \subset \Map_{\completion{W}}(\orbit_{\zeta}),
\end{gathered}
\end{equation*}
where the $\Map_{X}$ spaces are Banach manifolds of maps of punctured Riemann surfaces $(S \subset \C, \domainJ)$ into a space $X$ with
\begin{equation*}
X=\completion{Y}, \completion{V}, \completion{\divSet}, \completion{W}
\end{equation*}
satisfying asymptotic convergence constraints along punctures. After deleting removable punctures, the domains of maps are stable (in the sense of hyperbolic geometry) and so have finite automorphism groups.

Over the manifolds of maps $\Map_{X}$ we have Banach bundles $\Omega^{0}$ and $\Omega^{0, 1}$ whose fibers at a map $u$ are (Sobolev complete) spaces of sections
\begin{equation*}
\Omega^{0}|_{u} =\Sec(u^{\ast}TX), \quad \Omega^{0, 1}|_{u} = \left\{ v \in \Sec((u^{\ast}TX) \otimes T^{\ast}S)\ : J_{X}v = -v\circ \domainJ \right\},
\end{equation*}
over the domain $(S, \domainJ)$ of $u$ respectively, satisfying decay conditions along cylindrical ends of $S$. We assume as in \cite{Avdek:Hypersurface} that our maps into $\completion{\divSet}$ and $\completion{W}$ take the form
\begin{equation}\label{Eq:PositiveEnds}
u: [0, \infty)_{p}\times \Circle_{q} \rightarrow [0, \infty)_{s} \times \Circle_{t} \times \disk_{y}^{2n-2}  \times \disk, \quad u(s,t) = (p + s_{0}, q, u_{Y}, u_{\disk})
\end{equation}
along positive ends of maps which are positively asymptotic to the $\orbit_{\zeta}$. Here, as in \S \ref{Sec:Asymptotics}, $\Circle \times \disk_{y}^{2n-2}$ is a neighborhood of a Reeb orbit in $Y$ and $\disk$ is a neighborhood of $\zeta \in \Crit(f)$. Details of the appropriate (weighted) Sobolev spaces are not relevant for the present arguments and can be found in \cite{BH}. The setup allows us to view $\delbar_{\domainJ, J_{X}}$ as a fiber-wise bundle morphism $\Omega^{0} \rightarrow \Omega^{0, 1}$ over $\Map_{X}$ whose zero set is the relevant moduli space of holomorphic maps.

\subsection{Spaces of perturbations}

Following \cite{BH}, the perturbations used to determine contact homology differentials and augmentations will be constructed using the eigendecompositions of asymptotic operators. The asymptotic operator for each orbit $\gamma_{\zeta} = \orbit \times \zeta$ has the form
\begin{equation*}
\AsymptoticOp_{\orbit_{\zeta}} = \AsymptoticOp_{\orbit} \oplus \AsymptoticOp_{\zeta}
\end{equation*}
where $\AsymptoticOp_{\orbit}$ is the asymptotic operator associated to the orbit $\orbit$ in $Y$ and $\AsymptoticOp_{\zeta}$ is is as described in Equation \eqref{Eq:NormalAsymptoticOp}. For each $\eigenbound \in \R_{>0}$, let $\eigenspace^{\leq \eigenbound}_{\orbit_{\zeta}}$ be the finite dimensional vector space spanned by eigenfunctions of $\AsymptoticOp_{\orbit_{\zeta}}$ whose eigenvalues are bounded in absolute value by $\eigenbound$. By the above direct sum splitting, it follows that
\begin{equation*}
\eigenspace^{\leq \eigenbound}_{\orbit_{\zeta}} = \eigenspace^{\leq \eigenbound}_{\orbit} \oplus \eigenspace^{\leq \eigenbound}_{\zeta}
\end{equation*}
where the $\eigenspace^{\leq \eigenbound}_{\orbit}$ and $\eigenspace^{\leq \eigenbound}_{\zeta}$ are defined analogously using the operators $\AsymptoticOp_{\orbit}$ and $\AsymptoticOp_{\zeta}$, respectively. For a tuple $\vec{\orbit}_{\zeta} = (\orbit_{1} \times \zeta, \dots, \orbit_{k}\times \zeta)$ of orbits in $\divSet$ write $\eigenspace^{\leq \eigenbound}_{\vec{\orbit}_{\zeta}} = \bigoplus_{i} \eigenspace^{\leq \eigenbound}_{\orbit_{i} \times \zeta}$ which likewise decomposes into $Y$ and $\zeta$ summands,
\begin{equation*}
\eigenspace^{\leq \eigenbound}_{\vec{\orbit}_{\zeta}} = \eigenspace^{\leq \eigenbound}_{\vec{\orbit}} \oplus \left(\eigenspace^{\leq \eigenbound}_{\zeta}\right)^{\oplus k}.
\end{equation*}

The $\eigenspace_{\ast}^{\leq \eigenbound}$ determine finite rank subbundles $\transSubbundle_{X}^{\leq \eigenvalue} \subset \Omega^{0, 1}$ over the manifolds of maps $\Map_{X}$ spanned by perturbations supported on cylindrical ends of the domains $(S, \domainJ)$ of our maps. For $X=Y, \completion{V}$, there is one perturbation supported on the positive cylindrical end of $S$ for each eigenfunction of $\AsymptoticOp_{\orbit}$ having eigenvalue $\eigenvalue \in (0, \eigenbound]$ and one perturbation supported on the $i$th negative end of $S$ for each eigenfunction of $\AsymptoticOp_{\orbit_{i}}$ having eigenvalue $\eigenbound \in [-\eigenbound, 0)$. For $X=\divSet, \completion{W}$, there is one perturbation supported on the positive cylindrical end of $S$ for each eigenfunction of $\AsymptoticOp_{\orbit_{\zeta}}$ having eigenvalue $\eigenvalue \in (0, \eigenbound]$ and one perturbation supported on the $i$th negative end of $S$ for each eigenfunction of $\AsymptoticOp_{\orbit_{i} \times \zeta}$ having eigenvalue $\eigenvalue \in [-\eigenbound, 0)$.

For the determination of an element of $\transSubbundle_{X}, X=\completion{Y}, \completion{W}$ from an eigenfunction we follow \cite[\S 9]{Avdek:Hypersurface}, working explicit formulas only the perturbations positive ends of curves. Let $B=B(p)$ be a function on $[0, \infty)$ for which $B(0)=1$ and $B(p)=1$ for $p\geq 1$. Let $u$ be a positive half-cylindrical end of a map asymptotic to some $\orbit_{\zeta}$ as in Equation \eqref{Eq:PositiveEnds}, with $\eigenfunction_{\orbit}$ (respectively, $\eigenfunction_{\zeta}$) an eigenfunction of $\AsymptoticOp_{\orbit}$ (respectively, $\AsymptoticOp_{\zeta}$) of eigenvalue $\eigenvalue_{\orbit}$ (respectively, $\eigenvalue_{\zeta}$). Our associated perturbations associated to such eigenfunctions at $u$ take the form
\begin{equation}\label{Eq:PertubationExplicit}
\begin{gathered}
u: [0,\infty)_{p} \times \Circle_{q} \rightarrow [0, \infty)_{s} \times \Circle_{t} \times \disk_{y}^{2n-2} \times \disk, \quad 0 < \eigenvalue_{\orbit}, \eigenvalue_{\zeta} \leq \eigenbound\\
\mu(\eigenfunction_{\orbit}) = \frac{\partial B}{\partial p}(0, 0, e^{-\eigenvalue_{\orbit} p}\eigenfunction_{\orbit}(q), 0)\otimes dp^{0, 1},\\
\mu(\eigenfunction_{\zeta}) = \frac{\partial B}{\partial p}(0, 0, 0, e^{-\eigenvalue_{\zeta} p}\eigenfunction_{\zeta}(q))\otimes dp^{0, 1}.
\end{gathered}
\end{equation}
So the supports of all perturbations are compact and contained in the half-cylindrical ends of their domains. Whenever a $\delbar u \in \transSubbundle_{X}^{\leq \eigenbound}$, then in the situation of Equation \eqref{Eq:PertubationExplicit} we have that along a positive half-cylindrical end,
\begin{equation}\label{Eq:PerturbedFourier}
\begin{gathered}
u = u_{\C} + u_{\mu}\\
u_{\C} = \left(p + s_{0}, q, \sum_{\eigenvalue_{\orbit}<0}e^{p\eigenvalue_{\orbit}}\sum_{\eigenfunction_{\orbit, i} \in \eigenspace_{\eigenvalue_{\orbit}}} a_{\C,i}\eigenfunction_{\orbit, i}(q), \sum_{\eigenvalue_{\zeta}<0}e^{p\eigenvalue_{\zeta}}\sum_{\eigenfunction_{\zeta, i} \in \eigenspace_{\eigenvalue_{\zeta}}} b_{\C,i}\eigenfunction_{\zeta, i}(q)\right),\\
u_{\mu} = (B-1)\left(0,0, \sum_{0<\eigenvalue_{\orbit}<\eigenbound}e^{-p\eigenvalue_{\orbit}}\sum_{\eigenfunction_{\orbit, i} \in \eigenspace_{\eigenvalue_{\orbit}}} a_{\mu,i}\eigenfunction_{\zeta, i}(q), \sum_{0<\eigenvalue_{\zeta}<\eigenbound}e^{-p\eigenvalue_{\zeta}}\sum_{\eigenfunction_{\orbit, i} \in \eigenspace_{\eigenvalue_{\zeta}}} b_{\mu,i}\eigenfunction_{\zeta, i}(q)\right)\\
\implies \delbar u = \sum a_{\mu, i}\mu(\eigenfunction_{\orbit, i}) + \sum b_{\mu, i}\mu(\eigenfunction_{\zeta, i})
\end{gathered}
\end{equation}
So $u_{\C}$ is holomorphic and $u_{\mu}$ completely determines $\delbar u$. This follows the pattern of \cite[\S 7.6]{Avdek:Hypersurface}.

Again, we have splittings
\begin{equation*}
\transSubbundle_{\completion{W}}^{\leq \eigenbound} = \transSubbundle_{\foliation}^{\leq \eigenbound} \oplus \transSubbundle_{\perp}^{\leq \eigenbound} \rightarrow \Map_{\completion{W}}, \quad \transSubbundle_{\divSet}^{\leq \eigenbound} = \transSubbundle_{\foliation}^{\leq \eigenbound} \oplus \transSubbundle_{\perp}^{\leq \eigenbound} \rightarrow \Map_{\completion{\divSet}}
\end{equation*}
where the $\transSubbundle_{\foliation_{W}}^{\leq \eigenbound}, \transSubbundle_{\foliation_{\divSet}}^{\leq \eigenbound}$ are spanned by perturbations $\mu(\eigenfunction_{\orbit})$ associated to the $\AsymptoticOp_{\orbit}$ and the $\transSubbundle_{\perp}^{\leq \eigenbound}$ are spanned by perturbations $\mu(\eigenfunction_{\zeta})$ associated to the $\AsymptoticOp_{\zeta}$. The important property from Equation \eqref{Eq:PertubationExplicit} is that in the notation of \S \ref{Sec:Fredholm}, where the $\Omega^{0, 1}_{\foliation}$ and $\Omega^{0, 1}_{\perp}$ are defined,
\begin{equation*}
\transSubbundle_{\foliation}^{\leq \eigenbound} \subset \Omega^{0, 1}_{\foliation}, \quad \transSubbundle_{\perp}^{\leq \eigenbound} \subset \Omega^{0, 1}_{\perp}.
\end{equation*}
This follows from the fact that along $[0, \infty) \times \Circle \times \disk^{2n-2}_{y} \times \disk$ is given by
\begin{equation*}
T\foliation = \left\langle (1,0,0,-\grad f), (0,1,0,0), T\disk^{2n-2}_{y} \right\rangle.
\end{equation*}

To construct transverse subbundles for maps into the spaces $X=\completion{Y}, \completion{V}$, we follow the above formulas ignoring $u_{\disk}$ summands of maps $u$ as above as well as the $\eigenfunction_{\zeta}$. This yields $\transSubbundle_{X} = \langle \mu(\eigenfunction_{\orbit})\ : \eigenfunction_{\orbit} \in \eigenspace_{\eigenvalue_{\orbit}}, |\eigenvalue_{\orbit}| \leq \eigenbound \rangle$. If we have a map $u$ into either of these spaces, then along a positive cylindrical end $u=(p+s_{0}, t, u_{y}): [0, \infty) \times \Circle \rightarrow [0, \infty) \times\Circle \times \disk_{y}$ with $\Circle \times \disk_{y}$ a neighborhood of the orbit in $Y$, then applying some leaf inclusion $\leafInclusion_{\leaf}$, we will have $\leafInclusion_{\leaf}\circ u = (p+s_{0}, t, u_{y}, \upsilon(s))$ where $\upsilon$ is a $\grad f$ solution. So the following lemma is immediate from Equation \eqref{Eq:PerturbedFourier}.

\begin{lemma}\label{Lemma:LeafInclusionDbar}
Let $u$ be a map into $\R_{s} \times Y$ (respectively, $\completion{V}$) satisfying $\delbar u = \sum a_{\mu, i}\mu(\eigenfunction_{\orbit, i})$ in $\transSubbundle^{\leq a}_{\R_{s} \times Y}$ (respectively, $\transSubbundle^{\leq \eigenbound}_{\completion{V}}$). If we apply a leaf inclusion map $\leafInclusion$ to $u$ then $\leafInclusion\circ u$ will satisfy $\delbar \leafInclusion u = \sum a_{\mu, i}\mu(\eigenfunction_{\orbit, i})$ in $\transSubbundle^{\leq a}_{\R_{s} \times \divSet}$ (respectively, $\transSubbundle^{\leq \eigenbound}_{\completion{W}}$).
\end{lemma}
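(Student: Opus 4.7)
The plan is to reduce the claim to a chain rule for holomorphic maps, using that each leaf inclusion $\leafInclusion$ is a holomorphic embedding by construction, and then to inspect the explicit formula \eqref{Eq:PertubationExplicit} to see that the pushforward lands in the correct subbundle on the nose.

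First, I would recall from \Cref{lemma:foliation_symp} and \Cref{Lemma:FoliationProperties} that the $\completion{V}$-leaves of $\foliation_W$ and the $\completion{Y}$-leaves of $\foliation_{\divSet}$ are parameterized by embeddings $\leafInclusion$ from $(\completion{V},J_V)$ and $(\completion{Y},J_Y)$ respectively satisfying $d\leafInclusion\circ J_V = J_W\circ d\leafInclusion$ (and analogously in the symplectization). A one-line chain rule computation then yields $\delbar_{J_W}(\leafInclusion\circ u) = d\leafInclusion(\delbar_{J_V} u)$ for any smooth $u$, and likewise for maps into $\R_s\times Y$. It therefore suffices to check that $d\leafInclusion$ sends each term $\mu(\eigenfunction_{\orbit,i})$ on the source to the corresponding term $\mu(\eigenfunction_{\orbit,i})$ viewed inside $\transSubbundle^{\leq \eigenbound}_{\foliation}\subset \transSubbundle^{\leq \eigenbound}_{\completion{W}}$ (respectively $\transSubbundle^{\leq \eigenbound}_{\R\times\divSet}$).

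Next I would consult \eqref{Eq:PertubationExplicit}: the perturbation $\mu(\eigenfunction_{\orbit,i})$ is a $(0,1)$-form valued purely in the $T\disk^{2n-2}_y$-summand of $T(\R_s\times Y)$, with zero $\partial_s$-, $\partial_t$-, and (when viewed in the ambient target) $\disk$-components. By \Cref{Lemma:LeafAsymptotics}, along the positive cylindrical end the leaf inclusion has the form $(s,y)\mapsto (s,y,\upsilon(s))$ with $\upsilon$ a $\grad f$ flow line; hence $d\leafInclusion$ acts as the identity on each $T\disk^{2n-2}_y$ vector, and only modifies $\partial_s$ by appending an $\upsilon'(s)\in T\Sigma$ component. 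Since $\mu(\eigenfunction_{\orbit,i})$ carries no $\partial_s$-component, its pushforward is literally the same expression with a zero $\Sigma$-entry, which is by definition $\mu(\eigenfunction_{\orbit,i})$ as an element of $\transSubbundle^{\leq \eigenbound}_{\foliation}$. In particular the pushforward lands in the $\transSubbundle^{\leq \eigenbound}_{\foliation}$ summand and not in $\transSubbundle^{\leq \eigenbound}_{\perp}$, because the latter is spanned by perturbations built from $\eigenfunction_{\zeta}$-eigenfunctions of $\AsymptoticOp_{\zeta}$ while our perturbations are built purely from $\eigenfunction_{\orbit}$-eigenfunctions of $\AsymptoticOp_{\orbit}$.

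I do not expect a real obstacle here: the argument is a bookkeeping exercise enabled by the clean product form of the perturbation bundles, the identity action of $d\leafInclusion$ on the $Y$-directions, and the holomorphicity of leaves. The only point deserving care is to verify that the explicit expression for $\mu(\eigenfunction_{\orbit,i})$ in \eqref{Eq:PertubationExplicit} has zero $\Sigma$-component, so that the $\upsilon'(s)$ modification of $d\leafInclusion$ on $\partial_s$ contributes nothing to the pushforward and no spurious $\transSubbundle_{\perp}^{\leq \eigenbound}$ term appears.
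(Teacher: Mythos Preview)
Your proposal is correct and matches the paper's approach. The paper gives no proof beyond declaring the lemma ``immediate from Equation \eqref{Eq:PerturbedFourier}'' after noting that in the cylindrical-end coordinates $\leafInclusion\circ u = (p+s_0,t,u_y,\upsilon(s))$ with $\upsilon$ a $\grad f$ flow line; your chain-rule formulation $\delbar_{J_W}(\leafInclusion\circ u)=d\leafInclusion(\delbar_{J_V}u)$ together with the explicit check that $d\leafInclusion$ fixes the $\disk_y^{2n-2}$-valued perturbations is exactly the unpacking of that sentence.
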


\subsection{Transversality over compact subsets}

A finite rank subbundle $\transSubbundle \subset \Omega^{0, 1}$ defined over a subset $U \subset \Map_{X}$ is a \emph{transverse subbundle} if for all $u \in U$
\begin{equation*}
\im \Dlinearized_{u} + \transSubbundle = \Omega^{0, 1}|_{u}.
\end{equation*}
Since transversality is an open condition, if $\transSubbundle$ is a transverse subbundle over a compact $U \subset \Map_{X}$ and $\transSubbundle$ is defined over an open neighborhood of $U$ in $\Map_{X}$, then there is an open $U' \subset \Map_{X}$ containing $U$ such that $\transSubbundle|_{U'}$ is a transverse subbundle. Following the Bao-Honda perturbation scheme, we must select compact subsets $\compactSubset_{\completion{W}}$ and $\compactSubset_{\divSet}/\R$ of our $\ModSpace_{\completion{W}}$ and $\ModSpace_{\divSet}/\R$ moduli spaces over which the spaces of perturbations described in the previous subsection are transverse.

Following the already-establishing pattern, the Kuranishi data for the higher-dimensional spaces ($\completion{W}, \completion{\divSet}$) will be determined by Kuranishi data associated to the lower-dimensional spaces ($\completion{V}, \completion{Y}$). So suppose that such $\compactSubset_{Y}/\R \subset \ModSpace_{Y}/\R$ and $\compactSubset_{\completion{V}} \subset \ModSpace_{\completion{V}}$ have been selected for each moduli space whose positive asymptotic $\orbit$ has action $\leq L$. We also assume that $\eigenbound > 0$ chosen so that over each such compact subset the subbundles $\transSubbundle_{\completion{Y}}^{\leq \eigenbound} \rightarrow \compactSubset_{Y}$ and $\transSubbundle_{\completion{V}}^{\leq \eigenbound} \rightarrow \compactSubset_{\completion{V}}$ are transverse.

\subsubsection{When $\vec{\orbit}_{\zeta} \neq \emptyset$}

We first determine the $\compactSubset_{\divSet}(\orbit_{\zeta}, \vec{\orbit}_{\zeta^{-}} \neq \emptyset)/\R$ by declaring that
\begin{equation*}
\compactSubset_{\divSet}(\orbit_{\zeta}, \vec{\orbit}_{\zeta^{-}})/\R = \leafInclusion_{\upsilon, \ModSpace/\R}\left(\compactSubset_{Y}(\orbit, \vec{\orbit})/\R\right).
\end{equation*}
Here, as always, the $\upsilon$ are $\grad f$ flow lines in $\Sigma^{\le \epsilon}$ (considered modulo $\R_{s}$-translation) which are negatively asymptotic to $\zeta^{-}$ and positively asymptotic to $\zeta$.

By continuity of $\leafInclusion_{\upsilon, \ModSpace/\R}$ the $\compactSubset_{\divSet}(\orbit_{\zeta}, \vec{\orbit}_{\zeta^{-}})/\R$ are compact. Since for each holomorphic $u \in \Map_{\completion{Y}}$,
\begin{equation}\label{Eq:LeafTangentPushforward}
(\leafInclusion_{\upsilon}u)^{\ast}T\foliation_{Y} = (\leafInclusion_{\upsilon}u)^{\ast}T(\im \leafInclusion_{\upsilon})= T\leafInclusion_{\upsilon}(u^{\ast}T(\completion{Y})),
\end{equation}
the transversality condition $\im\Dlinearized_{u} + \transSubbundle_{Y}^{\leq \eigenbound} = \Omega^{0, 1}(u^{\ast}(\R_{s}\times Y))$ implies the transversality condition
\begin{equation*}
\im\Dlinearized_{\foliation} + \transSubbundle_{\foliation_{Y}}^{\leq \eigenbound} = \Omega^{0, 1}(u^{\ast}(T\foliation)).
\end{equation*}
Here $\Dlinearized_{\foliation}$ is as in Equation \eqref{Eq:DBlockMatrix}.

From the compactness of $\compactSubset_{\divSet}(\orbit_{\zeta}, \vec{\orbit}_{\zeta^{-}})/\R$, $\R_{s}$ invariance, and the fact that there are only finitely many such compact spaces to consider (as per our action bound), it follows that we can guarantee transversality of $\transSubbundle_{\divSet}^{\leq \eigenbound} \rightarrow \compactSubset_{\divSet}(\orbit_{\zeta}, \vec{\orbit}_{\zeta^{-}})$ over all such relevant compact sets, possibly after increasing $\eigenbound$.

\subsubsection{When $\vec{\orbit}_{\zeta} = \emptyset$}

Now we concern ourselves with compact subsets of the $\ModSpace_{\completion{W}}(\orbit_{\zeta})$ and $\ModSpace_{\divSet}(\orbit_{\zeta}, \emptyset)/\R_{s}$, whose elements map to leaves of $\foliation_{W}$ and $\foliation_{\divSet}$, respectively.

Suppose that we have selected compact subsets $\compactSubset_{\completion{V}} \subset \ModSpace_{\completion{V}}$ for each such relevant $\ModSpace_{\completion{V}} = \ModSpace_{\completion{V}}(\orbit)$. For each $\zeta \in \Crit(f) \subset \Sigma^{\le \epsilon}$, let $U_{\zeta} \subset \completion{\Sigma}$ be the subset of $z \in \completion{\Sigma}$ for which $\im\leafInclusion_{z} \subset \completion{W}$ is positively asymptotic to $Y \times \{ \zeta\}$ as described in Lemma \ref{Lemma:LeafAsymptotics}.

Since holomorphic curves in $\completion{W}$ are contained in leaves of $\foliation_{W}$, it follows that every $u \in \ModSpace_{\completion{W}}(\orbit_{\zeta})$ has can be uniquely expressed as $u = \leafInclusion_{z}\circ u_{\completion{V}}$ for some $u_{\completion{V}} \in \ModSpace_{\completion{V}}(\orbit)$. Therefore we have homeomorphisms
\begin{equation*}
\leafInclusion_{\zeta, \ModSpace}: U_{\zeta} \times \ModSpace_{\completion{V}}(\orbit) \rightarrow \ModSpace_{\completion{W}}(\orbit_{\zeta}), \quad (z, u_{\completion{V}}) \mapsto \leafInclusion_{z}\circ u_{\completion{V}}.
\end{equation*}
For a compact $\compactSubset_{\completion{V}} \subset \ModSpace_{\completion{V}}$ over which $\transSubbundle_{V}^{\leq \eigenbound}$ is a transverse subbundle, a slight notational modification of Equation \eqref{Eq:LeafTangentPushforward} yields
\begin{equation*}
\im \Dlinearized_{\foliation} + \transSubbundle_{\foliation_{W}}^{\leq \eigenbound} = \Omega^{0, 1}(u^{\ast}T\foliation_{W}), \quad u \in \leafInclusion_{\zeta, \ModSpace}\left(U_{\zeta} \times \compactSubset_{\completion{V}}\right).
\end{equation*}
Since the normal operator $\Dlinearized_{\perp}$ is always surjective -- as described in Equation \eqref{Eq:NormalPlaneAutomaticTransversality} -- it follows that
\begin{equation*}
\im \Dlinearized_{u} + \transSubbundle_{\completion{W}}^{\leq \eigenbound} = \Omega^{0, 1}, \quad u \in \leafInclusion_{\zeta, \ModSpace}\left(U_{\zeta} \times \compactSubset_{\completion{V}}\right).
\end{equation*}

Due to the non-compactness of the $U_{\zeta}$, the $\leafInclusion_{\zeta, \ModSpace}\left(U_{\zeta} \times \compactSubset_{\completion{V}}\right)$ are non-compact. To complete our construction we will consider special subsets of the $U_{\zeta}$. We can write $U_{\zeta}$ as a union of open subsets $U_{\zeta} =  U_{\zeta, \divSet} \subset U_{\zeta, W}$ defined as follows:
\be
\item For $z \in U_{\zeta, W}$ each $\leafInclusion_{z}\completion{V} \subset \overline{W}$ touches the complement of $(0, \infty) \times \divSet$ in $\completion{W}$.
\item For $z \in U_{\zeta, \divSet}$ each $\leafInclusion_{z}\completion{V}$ is contained in the interior $(0, \infty) \times \divSet$ of the half cylindrical end of $\completion{W}$.
\ee

Observe that $U_{\zeta, W}$ has compact closure in $\completion{\Sigma}$ while $U_{\zeta, \divSet}$ does not. Choosing compact subsets $\compactSubset_{\zeta, W} \subset U_{\zeta, W}$ for each $\zeta \in \Crit(f)$ we define
\begin{equation*}
\compactSubset_{\completion{W}}(\orbit_{\zeta}) = \leafInclusion_{\zeta, \ModSpace}\left(\compactSubset_{\zeta, W} \times \compactSubset_{\completion{V}}(\orbit)\right) \subset \ModSpace_{\completion{W}}(\orbit_{\zeta}).
\end{equation*}

The $U_{\zeta, \divSet}$ have a $s\in (0, \infty)$ action given by translating holomorphic curves upward in the symplectization. The action is free and we choose compact $\compactSubset_{\zeta, \divSet}/\R \subset U_{\zeta, \divSet}/\R$. Holomorphic curves in $(0, \infty) \times \divSet$ as above -- where $J_{W}$ is $s$-invariant -- can be viewed as mapping in the symplecticization of $\divSet$ and therefore being elements of the $\ModSpace_{M}(\orbit_{\zeta})$ moduli spaces. We take our compact subsets of these (reduced) moduli spaces to be
\begin{equation*}
\compactSubset_{\divSet}(\orbit_{\zeta}, \emptyset)/\R = \leafInclusion_{\zeta, \ModSpace}\left( \compactSubset_{\zeta, \divSet}/\R \times \compactSubset_{\completion{V}}(\orbit)\right) \subset \ModSpace_{\divSet}(\orbit_{\zeta})/\R.
\end{equation*}

\subsection{Thickened moduli spaces}

Now we describe thickenings of our compact sets.

\begin{notation}
Having fixed a bound $\eigenbound$ on the eigenvalues of our asymptotic operators in the previous subsection, we drop the $\leq \eigenbound$ superscripts from our transverse subbundles moving forward.
\end{notation}

Choose small neighborhoods
\begin{equation*}
\NcompactSubset_{\completion{V}} \supset \compactSubset_{\completion{V}}, \quad \NcompactSubset_{\completion{W}} \supset \compactSubset_{\completion{W}} \quad \NcompactSubset_{Y} \supset \compactSubset_{Y}, \quad \NcompactSubset_{\divSet} \supset \compactSubset_{\divSet}
\end{equation*}
in the corresponding manifolds of maps. We require that the $\NcompactSubset_{Y}$ and $\NcompactSubset_{\divSet}$ are $\R_{s}$-invariant. Over the $\NcompactSubset$ our spaces of perturbations determine finite rank bundles $\transSubbundle_{\ast} \rightarrow \NcompactSubset_{\ast}$ and we define
\begin{equation}\label{Eq:ModSpaceThiccDef}
\begin{gathered}
\ModSpaceThicc_{\completion{V}} \subset \NcompactSubset_{\completion{V}}, \quad \ModSpaceThicc_{\completion{W}} \subset \NcompactSubset_{\completion{W}}, \quad \ModSpaceThicc_{Y} \subset \NcompactSubset_{Y}, \quad \ModSpaceThicc_{\divSet} \subset \NcompactSubset_{\divSet},\\
\ModSpaceThicc_{\ast} = \{ u \in \NcompactSubset_{\ast}\ :\ \delbar u \in \transSubbundle_{\ast}, \quad \norm{\delbar u} \leq \rho\}.
\end{gathered}
\end{equation}
Here $\rho$ is an arbitrarily small positive constant and the $\norm{\delbar u}$ can be defined using a metric on $\transSubbundle_{\ast}$ which we require to be $\R_{s}$ invariant in the symplectization cases. The $\ModSpaceThicc_{\ast}$ are known as interior Kuranishi charts in the language of \cite{BH}. By taking the $\rho$ to be sufficiently small, the $(\transSubbundle_{\ast} \subset \Omega^{0, 1}) \rightarrow \ModSpaceThicc_{\ast}$ are transverse subbundles by the openness of the transversality condition combined with transversality along the $\compactSubset$. Consequently, the $\ModSpaceThicc_{\ast}$ are smooth manifolds when they are constructed using parameterized maps. We can alternatively view the $\transSubbundle_{\ast} \rightarrow \ModSpaceThicc_{\ast}$ as orbibundles over smooth orbifolds by taking into account isotropy groups determined by automorphisms of the domains of the $u$. The distinction will not matter for the following arguments.

Lemma \ref{Lemma:LeafInclusionDbar} tells us that if we apply the leaf inclusion maps to the $\ModSpaceThicc_{Y}/\R$ and $\ModSpaceThicc_{\completion{V}}$ we obtain embeddings
\begin{equation*}
\begin{gathered}
\leafInclusion_{\upsilon, \ModSpaceThicc}: \ModSpaceThicc_{Y}(\orbit, \vec{\orbit})/\R \rightarrow \ModSpaceThicc_{\divSet}(\orbit_{\zeta}, \vec{\orbit}_{\zeta^{-}})/\R, \quad u_{Y} \mapsto \leafInclusion_{\upsilon}\circ u_{Y},\\
\leafInclusion_{\zeta, \ModSpaceThicc}: \compactSubset_{\zeta, \divSet}/\R \times \compactSubset_{\completion{V}}(\orbit) \rightarrow \ModSpaceThicc_{\divSet}(\orbit_{\zeta})/\R_{s}, \quad (z,u_{\completion{V}}) \mapsto \leafInclusion_{z}\circ u_{\completion{V}},\\
\leafInclusion_{\zeta, \ModSpaceThicc}: \compactSubset_{\zeta, W} \times \compactSubset_{\completion{V}}(\orbit) \rightarrow \compactSubset_{\completion{W}}(\orbit_{\zeta}), \quad (z,u_{\completion{V}}) \mapsto \leafInclusion_{z}\circ u_{\completion{V}}.
\end{gathered}
\end{equation*}
The first property of the following lemma is established by the definition of the leaf inclusion maps and the second has just been established.

\begin{lemma}\label{Lemma:PerturbedInLeaf}
The $\leafInclusion_{\upsilon, \ModSpaceThicc}$ and $\leafInclusion_{\zeta, \ModSpaceThicc}$ are such that if $u$ is a map to $\completion{\divSet}$ (or $\completion{W}$) which is the image of a $\leafInclusion_{\ast, \ModSpaceThicc}$, then
\be
\item the image of $u$ is contained in a leaf of $\foliation_{\divSet}$ (respectively, $\foliation_{W}$) and
\item $\delbar u$ lies in $\transSubbundle_{Y}$ (respectively, $\transSubbundle_{\completion{W}}$).
\ee
Moreover, by choosing the $\NcompactSubset_{\completion{W}}$ to be sufficiently small, we can guarantee that if $u \in \ModSpaceThicc_{\completion{W}}$ satisfies $\delbar u \in \transSubbundle_{\foliation_{W}}$, then $u$ is contained in a leaf of $\foliation_{W}$ and so is in the image of $\leafInclusion_{\zeta, \ModSpaceThicc}$.
\end{lemma}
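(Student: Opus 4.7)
The first property holds by construction: for $u$ in the image of $\leafInclusion_{\upsilon,\ModSpaceThicc}$ or $\leafInclusion_{\zeta,\ModSpaceThicc}$, the image of $u$ lies in $\im \leafInclusion_\upsilon$ or $\im \leafInclusion_z$, each of which is a leaf of $\foliation_{\divSet}$ or $\foliation_W$ by \Cref{lemma:foliation_symp,Lemma:FoliationProperties}. The second property is \Cref{Lemma:LeafInclusionDbar} applied at every positive and negative puncture of the domain, since the leaf inclusion maps send perturbations in $\transSubbundle_Y$ (respectively $\transSubbundle_{\completion{V}}$) to perturbations in $\transSubbundle_{\foliation_{\divSet}}$ (respectively $\transSubbundle_{\foliation_W}$). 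The substantive content is the Moreover clause.

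Let $u\in\ModSpaceThicc_{\completion{W}}(\orbit_\zeta)$ with $\delbar u\in\transSubbundle_{\foliation_W}$. The global diffeomorphism $\completion{V}\times\completion{\Sigma}\to\completion{W}$, $(v,z)\mapsto\leafInclusion_z(v)$ from \Cref{Lemma:FoliationProperties}, identifies leaves of $\foliation_W$ with the fibers $\{z=z_0\}$ of the projection $\pi_{\completion{\Sigma}}:\completion{W}\to\completion{\Sigma}$. Because $J_W$ preserves $T\foliation_W$ (leaves are holomorphic submanifolds) and also $T\foliation_W^\perp$ (by the assumption made in \Cref{Sec:Fredholm}), $J_W$ is block diagonal in these coordinates and induces an almost complex structure $J_\Sigma$ on $\completion{\Sigma}$, which is automatically integrable since $\dim_{\R}\completion{\Sigma}=2$. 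Writing $u=(u_V,u_\Sigma)$ and using that $\transSubbundle_{\foliation_W}$ takes values in $T\foliation_W$, the $T\completion{\Sigma}$-component of $\delbar u$ vanishes, so $u_\Sigma:(\C,\domainJ)\to(\completion{\Sigma},J_\Sigma)$ is honestly $J_\Sigma$-holomorphic.

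At the positive puncture at $\infty\in\C$, $u$ is asymptotic to $\orbit_\zeta=\orbit\times\{\zeta\}$, so $u_\Sigma(z)\to\zeta$ with exponential rate. Indeed, the $\disk$-factor of Equation \eqref{Eq:PerturbedFourier} reduces to the honest Fourier expansion $\sum_{\eigenvalue_\zeta<0}e^{p\eigenvalue_\zeta}\sum_l b_{\C,l}\eigenfunction_{\zeta,l}(q)$, because the perturbation coefficients $b_{\mu,i}$ vanish once $\delbar u$ has no $\transSubbundle_\perp$-component. Hence $u_\Sigma$ is bounded near $\infty$ and extends by removable singularity to a holomorphic map $\widetilde{u}_\Sigma:\sphere^2\to\completion{\Sigma}$. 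Since $\Sigma$ is not a disk by the standing hypothesis of \Cref{thm:no_augmentation}, $\completion{\Sigma}$ is a connected non-compact Riemann surface, so any holomorphic map $\sphere^2\to\completion{\Sigma}$ is constant (by the open mapping theorem the image would otherwise be simultaneously open and compact, hence clopen and equal to all of $\completion{\Sigma}$, contradicting non-compactness). Therefore $u_\Sigma\equiv\zeta$, $\im u$ lies in the leaf $\leaf_\zeta=\leafInclusion_\zeta(\completion{V})$, and $u=\leafInclusion_\zeta\circ u_{\completion{V}}$ for a unique $J_V$-holomorphic plane $u_{\completion{V}}:\C\to\completion{V}$. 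By \Cref{Lemma:LeafInclusionDbar} (read in reverse), $\delbar u_{\completion{V}}$ corresponds to $\delbar u$ under the bijection of perturbation spaces, so by choosing $\NcompactSubset_{\completion{W}}$ sufficiently small (relative to the thickening radius $\rho$ and to $\NcompactSubset_{\completion{V}}$) we ensure $u_{\completion{V}}\in\ModSpaceThicc_{\completion{V}}(\orbit)$, placing $u$ in the image of $\leafInclusion_{\zeta,\ModSpaceThicc}$.

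The main obstacle I anticipate is justifying that the block-diagonal decomposition of $J_W$ with respect to the global splitting $T\foliation_W\oplus T\foliation_W^\perp$ is valid throughout $\completion{W}$ — in particular in the bulk region where the foliation was extended by hand in \S\ref{Sec:Foliation}, rather than just along the cylindrical end where $J_W$ was constructed explicitly via the leaf parametrization. This follows from the explicit construction in \Cref{Lemma:FoliationProperties} combined with the preservation assumption on $T\foliation_W^\perp$ from \Cref{Sec:Fredholm}. The advantage of this approach over reproving a Fourier/positivity-of-intersection argument in the style of \Cref{Lemma:PlaneInLeaf} is that once the perturbation is tangent to the foliation the transverse dynamics become honestly holomorphic, reducing the entire problem to standard Riemann surface complex analysis applied to $u_\Sigma$.
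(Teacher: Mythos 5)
Your treatment of items (1) and (2) matches the paper's (they follow from the construction of the $\leafInclusion_{\ast,\ModSpaceThicc}$ together with Lemma \ref{Lemma:LeafInclusionDbar}), but for the ``Moreover'' clause you take a genuinely different route. The paper argues by a dimension count: automatic transversality of $\Dlinearized_{\perp}$ makes $\ModSpaceThicc_{\completion{W},\foliation_W}=\{u:\delbar u\in\transSubbundle_{\foliation_W}\}$ a smooth manifold, its dimension $\ind_{\Morse}(\zeta)+\ind\Dlinearized_{\foliation}+\rank\transSubbundle_{\foliation}$ is computed to equal $\dim\im\leafInclusion_{\zeta,\ModSpaceThicc}$ (using that $\transSubbundle_{\completion{V}}$ and $\transSubbundle_{\foliation_W}$ have equal rank), so the image is an open submanifold of full dimension and shrinking $\NcompactSubset_{\completion{W}}$ makes the inclusion a homeomorphism. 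You instead argue directly that $\delbar u\in\transSubbundle_{\foliation_W}$ forces $u$ into a leaf, by projecting along the leaf parametrization to $\completion{\Sigma}$ and running a compact-versus-noncompact image argument. Your route is more geometric and reproves a ``curves lie in leaves'' statement for perturbed maps; the paper's route is softer but has the advantage that it transfers verbatim to the tree charts of \S\ref{Sec:TreeCharts} (Lemma \ref{Lemma:GluingLeafInclusion} is proved by ``the identical argument''), where your complex-analytic argument would need to be redone for glued annular necks.

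The one step you assert without adequate justification is the claim that $J_W$ is block diagonal with a lower block that \emph{descends} to an almost complex structure $J_\Sigma$ on $\completion{\Sigma}$. The assumption from \S\ref{Sec:Fredholm} that $J_W$ preserves $T\foliation_W^{\perp}$ gives block-diagonality, but it does not say that the induced endomorphism of $T_z\completion{\Sigma}\cong T\foliation_W^{\perp}$ is constant along each leaf; a priori it depends on the $\completion{V}$-coordinate, so $u_\Sigma$ satisfies only a Cauchy--Riemann-type equation $du_\Sigma+\tilde J(u_V(\cdot),u_\Sigma(\cdot))\circ du_\Sigma\circ\domainJ=0$ with domain-dependent coefficients rather than an honest $\delbar_{J_\Sigma}$-equation. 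This is repairable: the similarity principle still applies to such equations, so $u_\Sigma$ is either constant or an open map, and continuity at $\infty$ (which does hold, since for $k<-1$ the eigenvalues satisfy $\eigenvalue_k<\eigenvalue_{-1}=-\epsilon$, so the leaf label converges -- note it converges to a point of $U_\zeta$ determined by the leading Fourier coefficient, not to $\zeta$ itself as you wrote) plus compactness of $\overline{u_\Sigma(\C)}$ in the connected noncompact surface $\completion{\Sigma}$ still yields constancy. With that repair, and with the final ``shrink $\NcompactSubset_{\completion{W}}$'' step to land $u_{\completion{V}}$ and the leaf label in the prescribed compact sets (a step both proofs need), your argument goes through.
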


\begin{proof}
We need only to work out the last statement. Since $\transSubbundle_{\completion{V}}$ is a transverse subbundle and $\leafInclusion_{\zeta, \ModSpaceThicc}$ is an embedding we see that the dimension of its image is
\begin{equation*}
\dim \compactSubset_{\zeta, W} + \dim \compactSubset_{\completion{V}} = \ind_{\Morse}(\zeta) + \ind(u_{\completion{V}}) + \rank \transSubbundle_{\completion{V}}
\end{equation*}
for $u_{\completion{V}} \in \ModSpaceThicc_{\completion{V}}$. Since $\transSubbundle_{\completion{V}}$ and $\transSubbundle_{\foliation_{W}}$ are both determined by the $\eigenvalue \leq \eigenbound$ eigenspaces of the asympotic operator $\AsymptoticOp_{\orbit}$, we have that their ranks agree.

Recall that $\transSubbundle_{\foliation_{W}}$ is a transverse subbundle over $\compactSubset_{\completion{W}}$ due to the automatic transversality of the ``normal'' linearized operator $\Dlinearized_{\perp}$ for planes. Hence
\begin{equation*}
\ModSpaceThicc_{\completion{W}, \foliation_{W}} = \{ u \in \ModSpaceThicc_{\completion{W}}\ :\ \delbar u \in \transSubbundle_{\foliation_{W}}\}
\end{equation*}
is a smooth manifold of dimension
\begin{equation*}
\dim \ModSpaceThicc_{\completion{W}, \foliation_{W}} = \ind(u) + \rank \transSubbundle_{\foliation} = \ind_{\Morse}(\zeta) + \ind \Dlinearized_{\foliation} + \rank \transSubbundle_{\foliation}.
\end{equation*}
Since the $\transSubbundle_{\completion{V}}$, $\transSubbundle_{Y}$, and $\transSubbundle_{\foliation}$ have the same rank, this dimension count exactly matches $\dim \im \leafInclusion_{\zeta, \ModSpaceThicc}$ and so we can ensure that $\leafInclusion_{\zeta, \ModSpaceThicc}$ is a homeomorphism by making $\NcompactSubset_{\completion{W}}$ small.
\end{proof}

\subsection{Multisections over interior charts}\label{Sec:MultisectionsInterior}

Over each $\ModSpaceThicc_{\ast}$, $u \mapsto \delbar u$ defines a section of $\transSubbundle_{\ast} \rightarrow \ModSpaceThicc_{\ast}$, which is not necessarily transversely cut out. To correct this, we choose $\mathcal{C}^{1}$-small multisections $\multisec: \ModSpaceThicc_{\ast} \rightarrow \transSubbundle$ for which the solution spaces $\{ u \in \ModSpaceThicc\ :\ \delbar u = \multisec\}$ are transversely cut out. All sections are required to vanish along the ``vertical boundaries'' $\{ u \in \ModSpaceThicc\ : \ \norm{\delbar u} = \rho \}$ \cite{BH}.

We choose our multisections for the $\ModSpaceThicc_{Y}, \ModSpaceThicc_{\completion{W}}$ by assuming that multisections $\multisec_{Y} \in \Sec(\ModSpaceThicc_{Y}, \transSubbundle_{Y})$ and $\multisec_{\completion{V}} \in \Sec(\ModSpaceThicc_{\completion{V}}, \transSubbundle_{\completion{V}})$ have already been selected so that transversality holds. We'll use the $\leafInclusion_{\zeta, \ModSpaceThicc}$ to push forward these sections over their images using the fact that for each map $u_{\completion{V}}$ into $\completion{V}$ and $z \in \completion{\Sigma}$, we have natural isomorphisms
\begin{equation*}
(\leafInclusion_{z}\circ u_{\completion{V}})^{\ast}\transSubbundle_{\foliation_{W}} = \transSubbundle_{\completion{V}}.
\end{equation*}
Define $\multisec_{\completion{W}}$ along the image of $\leafInclusion_{\zeta, \ModSpaceThicc}$ as
\begin{equation*}
\multisec_{\completion{W}}\left(\leafInclusion_{z}(u_{\completion{V}})\right) = \left(\leafInclusion_{z}(u_{\completion{V}})\right)^{\ast}\multisec_{\completion{V}}, \quad \multisec_{\completion{W}} \in \Sec(\im\leafInclusion_{\zeta, \ModSpaceThicc} \subset \ModSpaceThicc_{\completion{W}}, \transSubbundle_{\foliation_{W}}) \subset \Sec(\im\leafInclusion_{\zeta, \ModSpaceThicc}, \transSubbundle_{\completion{W}}).
\end{equation*}
We emphasize that $\multisec_{\completion{W}}$ has no $\transSubbundle_{\perp}$ summand and are independent of $z \in \compactSubset_{\completion{W}}$. Because the $\multisec_{\completion{V}}$ are transversely cut out, so are the $\multisec_{\completion{W}}$ along the image of $\leafInclusion_{\zeta, \ModSpaceThicc}$. We therefore can extend these multisections over the remainder of $\ModSpaceThicc_{\completion{W}}$ arbitrarily, subject to the requirement of transversality and vanishing along vertical boundaries.

Our multisections over the $\ModSpaceThicc_{\divSet}(\orbit_{\zeta}, \emptyset)$ are constructed with a slight modification, required by lack of transversality. In the case of the $\ModSpaceThicc_{\divSet}(\orbit_{\zeta}, \vec{\orbit}_{\zeta^{-}})$ we define $\multisec_{\divSet}$ over the image of the $\leafInclusion_{\upsilon, \ModSpaceThicc}$ so that for each $\grad f$ flow line $\upsilon$ and map $u_{Y}$ into $\R_{s} \times Y$ we have
\begin{equation*}
\begin{gathered}
\multisec_{\divSet} = \multisec_{\divSet, \foliation}\oplus \multisec_{\divSet, \perp}, \quad \multisec_{\divSet, \foliation}\left(\leafInclusion_{\upsilon}(u_{Y})\right) = \left(\leafInclusion_{\upsilon}(u_{Y})\right)^{\ast}\multisec_{Y}, \\
\multisec_{Y, \foliation} \in \Sec(\im\leafInclusion_{\upsilon, \ModSpaceThicc} \subset \ModSpaceThicc_{\divSet}, \transSubbundle_{\foliation_{\divSet}}), \quad \multisec_{Y, \perp} \in \Sec(\im\leafInclusion_{\upsilon, \ModSpaceThicc} \subset \ModSpaceThicc_{\divSet}, \transSubbundle_{\perp})
\end{gathered}
\end{equation*}
The transversality of $\multisec_{Y}$ guarantees that for $u$ in the image of $\leafInclusion_{\upsilon}$ that if $\delbar u = \multisec_{\divSet}$ than we have transversality for the $\Omega^{0,1}(u^{\ast}T\foliation_{\divSet})$ portion of $\Omega^{0, 1}$. However, since $\Dlinearized_{\perp}$ is not surjective in general (as a consequence of Equation \eqref{Eq:NormalNotPlaneIndex}), the $\multisec_{\divSet, \perp}$ cannot be $0$ in general, as was the case with thickened moduli spaces of holomorphic planes. As in the previous case, we then extend the $\multisec_{\divSet}$ to the remainder of $\ModSpaceThicc_{\divSet}$ subject to the constraints of transversality and vanishing along vertical boundaries.

\subsection{Extension over tree charts}\label{Sec:TreeCharts}

Now we upgrade the construction of the previous sections to more general Kuranishi charts indexed by contact homology trees $\tree$ with vertices $\Vertex_{\tree} = \{\vertex_{i}\}$ and $\Edge_{\tree} = \{ \edge_{j} \}$.

We first provide an overview of the general construction \cite{BH}, without specifying that we are working within $\completion{V}$, $\completion{Y}$, $\completion{W}$ or $\completion{\divSet}$. For each such $\tree$ we have interior charts $\transSubbundle^{i} \rightarrow \ModSpaceThicc^{i}$ associated to each $\vertex_{i}$ with edges $\edge_{j}$ connecting vertices which have common asymptotics. This data comes with a gluing map
\begin{equation*}
\Glue^{\tree}: \left(\prod \ModSpaceThicc^{i}\right) \times [C, \infty)_{\necklength_{j}}^{\# \Edge_{\tree}} \rightarrow \Map
\end{equation*}
into a manifold of maps for some $C \gg 0$. The $\necklength_{j}$ are \emph{neck length parameters}. Over the image we have a transverse subbundle
\begin{equation*}
\transSubbundle^{\tree} \rightarrow \im \Glue^{\tree}, \quad \Glue^{\ast}\transSubbundle^{\tree} = \boxtimes_{\vertex_{i}} \transSubbundle^{i}.
\end{equation*}
Multisections $\multisec^{\tree}$ over $\im\Glue^{\tree}$ are determined by a melding construction -- see \cite{BH} or the simplification in \cite{Avdek:Hypersurface} -- with the property that when the $\necklength_{j}$ are all sufficiently large, then $\Glue^{\ast}\multisec^{\tree}$ is $\mathcal{C}^{1}$ close to $\boxtimes \multisec^{i}$ where $\multisec^{i} \in \Sec(\ModSpaceThicc^{i}, \transSubbundle^{i})$ are the multisections chosen over the interior Kuranishi charts. The images of the $\Glue^{\tree}$ can overlap when neck length parameters are small, requiring additional compatibility conditions for the $\multisec^{\tree}$ so that the $\multisec^{\tree}$ patch together to form a coherently defined section of $\transSubbundle^{\tree}$ over the union of the images of all $\Glue^{\tree}$.

Now let's specify that we are applying the above setup to study perturbed holomorphic planes in $\completion{V}$. Here we write $\tree_{\completion{V}}$ for our trees. In this case each $\ModSpaceThicc^{i}$ will be either a $\ModSpaceThicc_{\completion{V}}$ or a $\ModSpaceThicc_{Y}/\R_{s}$ with the gluing maps $\Glue_{\completion{V}}^{\tree_{\completion{V}}}$ having domain $\dom \Glue_{\completion{V}}^{\tree_{\completion{V}}}$ and target $\Map_{\completion{V}}$, a manifold of maps of planes into $\completion{V}$ which are positively asymptotic to some closed orbit $\orbit$ of $R_{Y}$. The union of the solutions sets $\{ \delbar u = \multisec^{\tree_{\completion{V}}}\} \subset \im \Glue^{\tree}$ over all $\tree$ then form a branched, weighted orbifold with corners, admitting an orientation determined by some choices of framing data associated to the orbits of $R_{Y}$ having action $\leq L$.

Let's instead specify that we are applying the above setup to study perturbed holomorphic planes in $\completion{W}$. Here our trees will be denoted $\tree_{\completion{W}}$ and each $\ModSpaceThicc^{i}$ will be either a $\ModSpaceThicc_{\completion{W}}$ or a $\ModSpaceThicc_{\divSet}/\R_{s}$ with the gluing maps $\Glue_{\completion{W}}^{\tree_{\completion{W}}}$ having target $\Map_{\completion{W}}$, a manifold of maps of planes into $\completion{W}$ which are positively asymptotic to some closed orbit $\orbit_{\zeta}$ of $R_{\divSet}$. On the image of each $\Glue_{\completion{W}}^{\tree_{\completion{W}}}$ our transverse subbundle splits as
\begin{equation*}
\transSubbundle^{\tree_{\completion{W}}}_{\completion{W}} \rightarrow \im \Glue_{\completion{W}}^{\tree}, \quad \Glue^{\ast}\transSubbundle^{\tree_{\completion{W}}} = \Glue^{\ast}\left(\transSubbundle^{\tree}_{\foliation} \oplus \transSubbundle^{\tree_{\completion{W}}}_{\perp}\right), \quad \transSubbundle^{\tree_{\completion{W}}}_{\foliation} = \boxtimes_{\vertex_{i}} \transSubbundle_{\foliation}^{i}, \quad \transSubbundle^{\tree_{\completion{W}}}_{\perp} = \boxtimes_{\vertex_{i}} \transSubbundle_{\perp}^{i}.
\end{equation*}
Here $\transSubbundle_{\foliation}^{i}$ is a $\transSubbundle_{\foliation_{W}}$ if $\ModSpaceThicc^{i}$ is a $\ModSpaceThicc_{\completion{W}}$ and is a $\transSubbundle_{\foliation_{\divSet}}$ if $\ModSpaceThicc^{i}$ is a $\ModSpaceThicc_{\divSet}/\R_{s}$.

We briefly provide an explicit description of the $\transSubbundle^{\tree_{\completion{W}}}_{\completion{W}}$ subbundles for the purpose of demonstrating that our earlier results for interior Kuranishi charts carries over to the more general case of charts associated to trees. As in the case of (positive) half-cylinders described above, consider maps of annuli
\begin{equation*}
u: [0, \necklength]_{p} \times \Circle_{q} \rightarrow [0, \infty)_{s} \times \Circle_{t} \times \disk^{2n-2}_{y} \times \disk, \quad u(p, q) = (p + s_{0}, t, u_{y}, u_{\disk})
\end{equation*}
on which the target has the model $J$ as described in Equations \eqref{Eq:PositiveEnds}. For maps $u$ which contain such annuli, perturbations are associated to eigenvalues of $\AsymptoticOp_{\orbit_{\zeta}} = \AsymptoticOp_{\orbit} \oplus \AsymptoticOp_{\zeta}$ via the following formulas as a direct analogy of Equation \eqref{Eq:PertubationExplicit}:
\begin{equation}\label{Eq:PertubationExplicitAnnulus}
\begin{gathered}
0 < \eigenvalue_{\orbit}, \eigenvalue_{\zeta} \leq \eigenbound \implies \begin{cases} \mu(\eigenfunction_{\orbit}) = \frac{\partial B}{\partial p}(0, 0, e^{-\eigenvalue_{\orbit} p}\eigenfunction_{\orbit}(q), 0)\otimes dp^{0, 1},\\
\mu(\eigenfunction_{\zeta}) = \frac{\partial B}{\partial p}(0, 0, 0, e^{-\eigenvalue_{\orbit} p}\eigenfunction_{\orbit}(q))\otimes dp^{0, 1},
\end{cases}\\
-\eigenbound < \eigenvalue_{\orbit}, \eigenvalue_{\zeta} < 0 \implies \begin{cases} \mu(\eigenfunction_{\orbit}) = \frac{\partial B^{+}}{\partial p}(0, 0, e^{-\eigenvalue_{\orbit} p}\eigenfunction_{\orbit}(q), 0)\otimes dp^{0, 1},\\
\mu(\eigenfunction_{\zeta}) = \frac{\partial B^{+}}{\partial p}(0, 0, 0, e^{-\eigenvalue_{\orbit} p}\eigenfunction_{\orbit}(q))\otimes dp^{0, 1},
\end{cases}\\
B^{+}(p) = B(p-\necklength).
\end{gathered}
\end{equation}
So the $0 < \eigenvalue_{\orbit}, \eigenvalue_{\zeta} \leq \eigenbound$ perturbations correspond to those of Equation \eqref{Eq:PertubationExplicit} supported on positive half-cylindrical ends of maps. The $-\eigenbound < \eigenvalue_{\orbit}, \eigenvalue_{\zeta} < 0$ perturbations correspond to perturbations associated to negative half-cylindrical ends of maps. When the gluing operation is performed, such perturbations span a subspace of $\transSubbundle^{\tree_{\completion{W}}}_{\completion{W}}$ associated to a gluing neck of neck-length $\necklength$. This agrees with \cite[\S 7]{Avdek:Hypersurface} up to some changes in notation, and from which an analogue of Equation \eqref{Eq:PerturbedFourier} may be derived. Replacing $\completion{W}$ with $\completion{V}$ then amounts to throwing out the $u_{\disk}, \mu(\eta_{\zeta})$ terms above.

Fix some $\zeta \in \Crit(f)$ and a $z \in \compactSubset_{\zeta, W} \subset \completion{\Sigma}$ so that the leaf $\leafInclusion_{z}(\completion{V}) \subset \completion{W}$ is positively asymptotic to $Y \times \zeta \subset \divSet$. Now we consider the composition
\begin{equation*}
\leafInclusion_{z}\circ \Glue^{\tree_{\completion{V}}}_{\completion{V}}: \dom \Glue^{\tree_{\completion{V}}}_{\completion{V}} \rightarrow \Map_{\completion{W}}.
\end{equation*}
The analogue of Lemma \ref{Lemma:PerturbedInLeaf}, whose proof is identical (using the above formulas in place of Equations \eqref{Eq:PertubationExplicit} and \eqref{Eq:PerturbedFourier}), is as follows.

\begin{lemma}\label{Lemma:GluingLeafInclusion}
For each $z$, $\tree_{\completion{V}}$ as above, $\im\leafInclusion_{z} \circ \Glue_{\completion{V}}^{\tree_{\completion{V}}}$ is contained in the image of some $\Glue_{\completion{W}}^{\tree_{\completion{W}}}$. Moreover, if $u \in \im \Glue_{\completion{W}}^{\tree_{\completion{W}}}$ is such that $\delbar u \in \transSubbundle^{\tree_{\completion{W}}}$ for some $\tree_{\completion{W}}$ with the image of $u$ not entirely contained in the half-cylindrical end $[0, \infty) \times \divSet$ of $W$, then $u \in \im\leafInclusion \circ \Glue_{\completion{V}}^{\tree_{\completion{V}}}$ for some $z$. 
\end{lemma}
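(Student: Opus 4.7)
The plan is to follow the pattern established by \Cref{Lemma:PerturbedInLeaf} and lift it from individual vertex charts to the full tree construction of \S\ref{Sec:TreeCharts}. First I would produce the tree $\tree_{\completion{W}}$ from $\tree_{\completion{V}}$ vertex-wise: a $\completion{V}$-vertex $\ModSpaceThicc_{\completion{V}}(\orbit^{i})$ is promoted to $\ModSpaceThicc_{\completion{W}}(\orbit^{i}_{\zeta})$, and a $Y$-vertex $\ModSpaceThicc_{Y}(\orbit^{i}, \vec{\orbit}^{i})/\R$ is promoted to $\ModSpaceThicc_{\divSet}(\orbit^{i}_{\zeta}, \vec{\orbit}^{i}_{\zeta^{-}})/\R$, where the critical points $\zeta, \zeta^{-} \in \Crit(f)$ are dictated by the leaf structure of \Cref{lemma:foliation_symp}. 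Edges and neck length parameters transfer unchanged, since each asymptotic orbit $\orbit$ of $R_{Y}$ is promoted consistently to $\orbit_{\zeta} = \orbit \times \zeta$ at both endpoints of each edge.

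For the first half of the lemma I would show that $\leafInclusion_{z} \circ \Glue^{\tree_{\completion{V}}}_{\completion{V}}$ factors through $\Glue^{\tree_{\completion{W}}}_{\completion{W}}$. Both gluing constructions follow the same recipe: pre-glue along cylindrical necks and then correct via Newton iteration within the transverse subbundle. The leaf inclusions $\leafInclusion_{z}$ and $\leafInclusion_{\upsilon}$ intertwine the asymptotic operators, since $\AsymptoticOp_{\orbit}$ embeds as the $T\foliation$-summand of $\AsymptoticOp_{\orbit_{\zeta}} = \AsymptoticOp_{\orbit} \oplus \AsymptoticOp_{\zeta}$, so the pre-gluing step commutes with leaf inclusion verbatim. \Cref{Lemma:LeafInclusionDbar} ensures the same for the Newton correction, and the vertex-wise pushforward compatibility of $\multisec^{\tree_{\completion{W}}}$ along leaf inclusion was built into the multisections of \S\ref{Sec:MultisectionsInterior}. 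An identical argument applies to $\leafInclusion_{\upsilon, \ModSpaceThicc}$ at the $\divSet$-vertices.

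For the converse I would mimic the positivity-of-intersection argument of \Cref{Lemma:PlaneInLeaf,Lemma:CurveInLeaf}, adapted to the perturbed setting. Assume $u \in \im \Glue^{\tree_{\completion{W}}}_{\completion{W}}$ has $\delbar u \in \transSubbundle^{\tree_{\completion{W}}}$ but is not in any $\leafInclusion_{z} \circ \Glue^{\tree_{\completion{V}}}_{\completion{V}}$. Using the explicit perturbations of \eqref{Eq:PertubationExplicitAnnulus}, the restriction of $u$ to each cylindrical end and each annular neck admits a Fourier expansion in the eigenfunctions of $\AsymptoticOp_{\orbit_{\zeta}}$. The $\transSubbundle^{\tree_{\completion{W}}}_{\perp}$ component of $\delbar u$ only produces modes with positive eigenvalue, so all negative-eigenvalue modes on the $\disk$ factor are genuinely holomorphic. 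Lifting $u$ to $\completion{V} \times \widetilde{\Sigma}$ and computing algebraic intersections with a nearby leaf $\widetilde{\leaf}_{z}$ as in \Cref{Lemma:CurveInLeaf} then forces all Fourier coefficients of index $k < -1$ in the $\disk$-factor to vanish at every puncture and every interior neck. Hence each vertex component of $u$ lies in a leaf of $\foliation_{W}$ or $\foliation_{\divSet}$, and reassembly via $\Glue^{\tree_{\completion{V}}}_{\completion{V}}$ produces the required factorization; the assumption $\im u \not\subset [0, \infty) \times \divSet$ selects $z \in \compactSubset_{\zeta, W}$ at the root vertex rather than a class in $\compactSubset_{\zeta, \divSet}/\R$.

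The main obstacle will be extending the positivity-of-intersection argument from isolated cylindrical ends to the glued annular necks. On an annulus of neck length $\necklength$ the Fourier expansion is a sum of modes decaying from both ends, and in principle cross terms could flip the sign of the intersection number with a lifted leaf. For $\necklength$ sufficiently large these cross terms are exponentially suppressed by factors of $e^{-\eigenvalue \necklength}$, so the leading eigenmode at the dominant end controls the intersection count and the usual sign constraint applies. Uniformity of this estimate in the tree structure and in the neck length parameters is already built into the Bao--Honda framework used throughout \S\ref{Sec:TreeCharts}, so I expect the extension to be essentially bookkeeping once the single-cylinder case is settled.
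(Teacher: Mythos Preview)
Your forward direction is fine and matches the paper's intent: promote the tree vertex by vertex and observe that leaf inclusion intertwines the eigenspace perturbations, so pre-gluing commutes with $\leafInclusion_{z}$ (the paper just says the argument is ``identical'' to \Cref{Lemma:PerturbedInLeaf} using \eqref{Eq:PertubationExplicitAnnulus} in place of \eqref{Eq:PertubationExplicit} and \eqref{Eq:PerturbedFourier}).

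For the converse you take a genuinely different route. The paper does \emph{not} revisit the positivity-of-intersection argument of \Cref{Lemma:PlaneInLeaf,Lemma:CurveInLeaf}; instead it repeats the dimension count from the proof of \Cref{Lemma:PerturbedInLeaf}. Since $\Dlinearized_{\perp}$ is automatically surjective for planes, $\transSubbundle^{\tree_{\completion{W}}}_{\foliation}$ alone is already a transverse subbundle over the glued chart, so the locus $\{\delbar u \in \transSubbundle^{\tree_{\completion{W}}}_{\foliation}\}$ is a smooth manifold of dimension $\ind_{\Morse}(\zeta)+\ind\Dlinearized_{\foliation}+\rank\transSubbundle^{\tree}_{\foliation}$. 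This matches the dimension of $\im(\leafInclusion_{z}\circ\Glue^{\tree_{\completion{V}}}_{\completion{V}})$, which is already an embedded submanifold of that locus; after shrinking the $\NcompactSubset$ neighborhoods the two coincide. (Note the hypothesis should really be $\delbar u\in\transSubbundle^{\tree_{\completion{W}}}_{\foliation}$, as in \Cref{Lemma:PerturbedInLeaf} and as actually used in \S\ref{Sec:VanishingAug} where $\multisec^{\tree,1}_{\perp}=0$; the dimension count fails for the full $\transSubbundle^{\tree_{\completion{W}}}$.)

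Your intersection-theoretic approach has a real obstacle beyond the neck cross-terms you flag. The winding argument of \Cref{Lemma:PlaneInLeaf,Lemma:CurveInLeaf} converts the winding number at a large circle into a non-negative algebraic intersection count with a holomorphic leaf, and the non-negativity comes from positivity of intersection for \emph{holomorphic} curves. Here $u$ is only perturbed-holomorphic: on the supports of the $\mu(\eigenfunction)$'s in the ends and necks, $\delbar u\neq 0$, and intersections with the leaf occurring in those regions need not be positive. You also lose the analytic-continuation step behind \Cref{Lemma:LeafFourierCriterion}, which is what lets vanishing of the $k<-1$ Fourier modes at one puncture propagate to global containment in a leaf. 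One can try to argue that the perturbations are small and compactly supported, so intersection numbers survive a homotopy to the holomorphic part $u_{\C}$, but $u_{\C}$ on the neck does not extend to a globally defined holomorphic map, so there is no clean global object to which positivity applies. The paper's dimension count sidesteps all of this.
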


\subsection{Vanishing of $\aug_{W}$}

Now we are ready to show that $\aug_{W}=0$ on orbits of action $\leq L$, using the perturbation scheme described above. To compute $\aug_{W}(\orbit_{\zeta})$ we need to count $\ind=0$ perturbed holomorphic planes $u$ in $\completion{W}$ which are positively asymptotic to $\orbit_{\zeta}$. Such $u$ live in the images of the $\Glue_{\completion{W}}^{\tree_{\completion{W}}}$, solving
\begin{equation*}
\delbar u = \multisec^{\tree_{\completion{W}}} = \multisec^{\tree_{\completion{W}}}_{\foliation} \oplus \multisec^{\tree_{\completion{W}}}_{\perp}, \quad \multisec^{\tree_{\completion{W}}}_{\foliation} \in \Sec\left(\im \Glue_{\completion{W}}^{\tree_{\completion{W}}}, \transSubbundle^{\tree_{\completion{W}}}_{\foliation}\right), \quad \multisec^{\tree_{\completion{W}}}_{\perp} \in \Sec\left(\im \Glue_{\completion{W}}^{\tree_{\completion{W}}}, \transSubbundle^{\tree_{\completion{W}}}_{\perp}\right).
\end{equation*}
Such solution spaces assumed transversely cut out and are oriented (after some choices have been made) by the usual contact homology orientation scheme. Since we are working in the $\ind=0$ case, this results in a rationally weighted collection of points, yielding a count in $\Q$.

Consider a $T \in [0, 1]$ family of sections $\multisec^{\tree_{\completion{W}}, T} = \multisec^{\tree_{\completion{W}}, T} = \multisec^{\tree_{\completion{W}}, T}_{\foliation} \oplus \multisec^{\tree_{\completion{W}}, T}_{\perp}$ subject to the following conditions:
\be
\item $\multisec^{\tree_{\completion{W}}, 0} = \multisec^{\tree_{\completion{W}}}$ and $\multisec^{\tree_{\completion{W}}, 1}_{\perp} = 0$ so that $\multisec^{\tree_{\completion{W}}, 1} = \multisec^{\tree_{\completion{W}}}_{\foliation}$.
\item The multisections are compatible over the overlaps of the $\im \Glue_{\completion{W}}^{\tree_{\completion{W}}}$.
\item The solution spaces $\{ \delbar u^{T} = \multisec^{\tree_{\completion{W}}, T}\} \subset \im \Glue_{\completion{W}}^{\tree_{\completion{W}}} \times [0, 1]$ are transversely cut out.
\ee
By Lemma \ref{Lemma:GluingLeafInclusion}, solutions to $\delbar u = \multisec^{\tree_{\completion{W}}, 1}$ are contained in leaves $\leafInclusion_{z}$. Indeed, by $\R_{s}$-invariance of almost complex structures on perturbations on the half-cylindrical end of $\completion{W}$ any such $\ind=0$ solution cannot be entirely contained in $[0, \infty) \times \divSet$. By the construction of multisections in \S \ref{Sec:MultisectionsInterior} -- in particular, invariance with respect to $z$ -- we have that at such a solution, $\multisec^{\tree_{\completion{W}}}_{\foliation}$ is determined by a $\multisec^{\tree_{\completion{V}}}$ so that $\Dlinearized_{\foliation_{W}}$ is surjective. Moreover at such a solution $\Dlinearized_{\perp}$ is automatically transverse by Equation \eqref{Eq:NormalPlaneAutomaticTransversality}. So we have transversality for the $T=0,1$ solution spaces, meaning that such a $\multisec^{\tree_{\completion{W}}, T}$ family of perturbations exists as in the third item above.

So by the construction of the $\multisec_{\foliation}$ solutions to $\delbar u = \multisec^{\tree_{\completion{W}}, 1}$ are of the form $\leafInclusion_{z}u_{\completion{V}}$ with $z \in \compactSubset_{\zeta, W}$, $u_{\completion{V}} \in \im \Glue_{\completion{V}}^{\tree_{\completion{V}}}$, and $\delbar u_{\completion{V}}= \multisec^{\tree_{\completion{V}}}$. Since $\ind u = 0$, it follows that $\ind u_{\completion{V}} = -\ind \Dlinearized_{\perp} = -\ind_{\Morse}\zeta \leq -1$. This violates surjectivity of $\Dlinearized_{u_{\completion{V}}}$.

Therefore the $T=1$ solution spaces are empty. Therefore for $T$ close to $1$ it follows that the solution spaces are empty as well. For the same reason it follows that by taking the $\multisec^{\tree_{\completion{W}}}_{\perp}$ to be sufficiently small, we can guarantee that the solution spaces are empty for all time. This guarantees that $\aug_{W}(\orbit_{\zeta})=0$.

\section{Non-vanishing of contact homology}\label{Sec:NonVanishing}

In this section we prove that the cover $M \times \Circle \times \R $ of a $M \times \torus^{2}$ Bourgeois contact manifold has non-vanishing contact homology. Following the arguments of \S \ref{Sec:IntroOutline}, this will complete our proof of Theorem \ref{thm:main}. To start, we review some background results from \cite{Avdek:Hypersurface}.

\subsection{The algebraic Giroux criterion}

Let $(W_+,\beta_+)$ and $(W_-,\beta_-)$ be two Liouville domains and $\phi:(\partial W_+, \beta_+|_{\partial W_+})\to (\partial W_-, \beta_-|_{\partial W_-})$ be a strict contactomorphism, meaning $\phi^{\ast}\beta_{-} = \beta_{+}$.

We use this data to define a convex hypersurface
\begin{equation*}
\hypersurface = (W_-\cup ([-1,1]_p\times \partial W_-) \cup_{\phi} W_+)
\end{equation*}
whose neighborhood $N(\hypersurface) = \R_{\tau} \times \hypersurface$ is equipped with a $\R_{\tau}$-invariant contact form $f\rd \tau+\beta$ as follows: Let $f$ be a function on $\hypersurface$ which
\be
\item is $\pm 1$ on $W_{\pm}$,
\item is an increasing function depending on $p$ along $[-1,1]_p\times \partial V_-$, and
\item is such that $f(0)=0$ with $\frac{\partial f}{\partial p}(0) > 0$.
\ee
Define $\beta$ by the properties that
\be
\item $\beta = \pm \beta_{\pm}$ along $W_{\pm}$ and
\item $h\beta_-|_{\partial W_-}$ along $[-1,1]_p\times \partial W_-$, where $h$ is a function of $p$ for which $h(p)\ge 1$, there is exactly one critical point at $0$ which is a maximum, $h(p)=2+p$ near $p=-1$, and $h(p)=2-p$ near $p=1$.
\ee
The gluing identifies the Liouville vector of $W_-$ with $-p\partial_p$ near $p=-1$ and the Liouville vector on $W_+$ with $-p\partial_p$ near $p=1$. Then $N(\hypersurface)$ is the neighborhood of convex surface $\tau=0$ considered in \cite{Avdek:Hypersurface}. We write $\xi = \ker(f\rd \tau+\beta)$ for the contact structure on $N(\hypersurface)$.

Given two Louville domains $W_{\pm}$, we have two DG augmentations $\aug_{\pm}:CC_*(\partial W_{\pm})\to \Q$ where $CC_{\ast}$ is the chain-level contact homology algebra. The definition of the algebra, of course, depends on auxiliary data including the choice of contact forms, almost complex structures on the sympelctizations of $\partial W_{\pm}$, and perturbations. The strict contactomorphism $\phi$ induces an is isomorphism $\phi_*:CC_*(\partial V_+)\to CC_*(\partial V_-)$ as long as the auxiliary data in the construction of the contact homology algebra is preserved by $\phi$. The following theorem {\cite[Theorem 1.1]{Avdek:Hypersurface}} provides an algebraic generalization of Giroux's criterion \cite[Theorem 4.5]{Giroux01}, which completely determines the tightness or overtwistedness of neighborhoods of convex hypersurfaces in contact $3$-manifolds.

\begin{theorem}\label{thm:AGC}
$CH(N(\hypersurface),\xi)\ne 0$ if and only if $\aug_+$ is DG homotopic (in the sense of \cite[Definition 14.5.1]{Avdek:Hypersurface}) to $\aug_-\circ \phi_*$.
\end{theorem}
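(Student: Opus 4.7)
The plan is to compute $CH(N(\hypersurface), \xi)$ directly, using an $\R_\tau$-invariant Morse--Bott contact form whose closed orbits are all concentrated over the dividing set $\divSet$, and then to identify the resulting chain complex with a mapping-cone-type construction built from $\aug_+$ and $\aug_- \circ \phi_*$.

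First I would analyze the Reeb dynamics of $\alpha = f\,\rd\tau + \beta$. On $W_{\pm} \subset \hypersurface$ the Reeb vector field is $\pm\partial_\tau$ and hence admits no closed orbits. In the collar $[-1,1]_p \times \partial W_-$ a direct computation shows that for $p \neq 0$ the Reeb flow drifts nontrivially in the $\tau$-direction (since $h'(p) \neq 0$), while at $p=0$, where $h'(0) = 0$ and $f(0) = 0$, the Reeb field becomes a positive multiple of $R_{\partial W_-}$. Consequently the closed Reeb orbits come in $\R_\tau$-invariant Morse--Bott $\R$-families, one over each closed Reeb orbit of $\beta_-|_{\partial W_-}$. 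A standard $\R_\tau$-invariant Bourgeois-type perturbation then splits each family into an elliptic--hyperbolic pair $\orbit^e, \orbit^h$ whose Conley--Zehnder indices differ by one.

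Next I would organize $CC_*(N(\hypersurface))$ as a twisted complex. Filtering by the total number of $\orbit^e$-factors in a word, the leading piece of the differential counts holomorphic cylinders in $\R_s \times N(\hypersurface)$ from an $\orbit^e$ to an $\orbit^h$. Neck-stretching along the two hypersurfaces $\{\tau = \pm T\}$ as $T \to \infty$ decomposes every such cylinder, together with its SFT compactification, into levels in the symplectizations of $(\partial W_\pm, \beta_\pm|_{\partial W_\pm})$ capped off by holomorphic curves in the completions $\completion{W_\pm}$. By construction, the counts of planes in $\completion{W_\pm}$ are exactly the augmentations $\aug_\pm$, and the boundary identification $\phi$ intertwines the $\completion{W_-}$-contribution with $\aug_- \circ \phi_*$ when viewed as a map out of $CC_*(\partial W_+)$. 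A matching/gluing argument then identifies the full differential on $CC_*(N(\hypersurface))$ with the mapping-cone differential associated to the chain map $\aug_+ - \aug_- \circ \phi_* : CC_*(\partial W_+) \to \Q$, extended to the free algebra on Morse--Bott generators as a derivation.

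The statement then follows from standard homological algebra: the mapping cone of a chain map between DG augmentations has vanishing homology if and only if the chain map is null-homotopic, which is exactly the DG-homotopy condition between $\aug_+$ and $\aug_- \circ \phi_*$. The hard part will be controlling SFT compactness and Kuranishi perturbation data finely enough to justify this cone description: one must ensure that the Bao--Honda perturbations of the $\R_s$-invariant moduli spaces in $\R \times N(\hypersurface)$ can be chosen compatibly with those underlying $\aug_+$, $\aug_-$, and the pushforward $\phi_*$, and that no additional broken configurations outside the two-side decomposition contribute to the differential. This is precisely the kind of analytic bookkeeping for which the Kuranishi framework used throughout the paper was designed, and it proceeds in the same spirit as the arguments of \S\ref{Sec:Foliation}--\S\ref{Sec:VanishingAug}, though the geometry is cleaner here since no holomorphic foliation is required.
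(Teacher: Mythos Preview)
The paper does not actually prove this theorem: it is quoted as \cite[Theorem 1.1]{Avdek:Hypersurface}, with only a remark clarifying the role of the gluing map $\phi$. So there is no in-paper argument to compare against; what you have written is a sketch of how the cited reference proceeds, and the geometric setup you describe (Reeb dynamics concentrated on $\divSet$, Morse--Bott families split into $\check{\orbit}$/$\hat{\orbit}$ pairs, neck-stretching into the two Liouville pieces) is indeed the architecture of that paper.

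However, your final algebraic step is where the argument breaks. You model $CC_*(N(\hypersurface))$ as the mapping cone of $\aug_+ - \aug_-\circ\phi_*: CC_*(\partial W_+)\to \Q$ and then invoke ``cone acyclic $\Leftrightarrow$ map null-homotopic''. Two problems: first, that homological-algebra fact is false as stated (acyclicity of a cone detects quasi-isomorphism, not null-homotopy); second, even granting some cone model, your conclusion would read $CH=0 \Leftrightarrow \aug_+\sim\aug_-\circ\phi_*$, which is the \emph{opposite} of the theorem. The object $CC_*(N(\hypersurface))$ is a unital free commutative DGA, not a chain complex, and the relevant condition $CH\ne 0$ means that $1$ is not a boundary, equivalently that the DGA admits an augmentation $\aug:CC_*(N(\hypersurface))\to\Q$. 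The correct mechanism is that such an $\aug$ restricted to the $\check{\orbit}$-generators must reproduce (a common value of) $\aug_\pm$ on $CC_*(\divSet)$, while its values on the $\hat{\orbit}$-generators furnish precisely the DG homotopy operator between $\aug_+$ and $\aug_-\circ\phi_*$. Conversely a DG homotopy lets you build $\aug$. This is what is carried out in \cite{Avdek:Hypersurface}, and it is not captured by a naive cone-of-a-chain-map picture.
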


\begin{rmk}
    The original statement of \cite[Theorem 1.1]{Avdek:Hypersurface} did not involve the strict contactomorphism $\phi$, but implicitly used the identity map as the gluing map. Strictly speaking, the identity map is only rigorously defined when $W_+=W_-$ as strict Liouville domains and we should always specify the gluing map $\phi$ in the general case. \Cref{thm:AGC} just clarifies the appearance of $\phi$, it has no mathematical modification to \cite[Theorem 1.1]{Avdek:Hypersurface}. 
\end{rmk}

\subsection{The Bourgeois case}

The augmentation $\aug_{W}$ of Theorem \ref{thm:no_augmentation} is determined by the contact hypersurface $\divSet \subset \completion{W} = \completion{V} \times \completion{\Sigma}$ with $\Sigma = [-1, 1]_{p}\times \Circle_{q}$ an annulus, choices of almost complex structures, and Kuranishi data. We recall that $\divSet = \divSet_{\Sigma} \cup \divSet_{V} = \partial W$ with $\divSet_{\Sigma} \simeq Y \times \Sigma$ containing all Reeb orbits which are contractible in $W$. Here we recall $Y=\partial V$. The subset $\divSet_{V}$ is a copy of $V\times \Circle$ for each boundary component of $\Sigma$ and so we can write $\divSet_V = (V \times \Circle)_{-1} \sqcup (V \times \Circle)_{1}$ with the connected components corresponding to the $p=\pm 1$ connected components of $\partial \Sigma$.

Recall that in the language of Proposition \ref{prop:BO},
\begin{equation*}
    (\Circle_{\tau} \times \hypersurface, \xi) = \BO(V, \phi) = \DG(W, W, \psi_{\BO}).
\end{equation*}
Here $\DG(W, W, \psi_{\BO})$ is the $\Circle$-invariant contact structure on $\Circle \times \hypersurface$ where $\hypersurface$ is the ``Bourgeois convex hypersurface whose positive and negative regions are both $W$, identified using $\psi_{\BO}: \divSet \rightarrow \divSet$. As described in Proposition \ref{prop:BO}, our gluing map $\psi_{\BO}$ for the $\hypersurface$ is the identity along $\divSet_{\Sigma} \cup (V \times \Circle)_{-1}$ and is given by $\phi \times \Id_{\Circle}$ along $(V \times \Circle)_{1}$. We recall that $\hypersurface$ is diffeomorphic to $M \times \Circle$, the $\R_{\tau}$ invariant contact structure on $\R_{\tau} \times \hypersurface$ covers the Bourgeois contact structure on $M \times \torus^{2}$, and that $\phi \in \Symp^{c}(V, \beta_{V})$ is the monodromy of the open book associated to $\Mxi$.

Equip $W_{+} = W$ with the data $\mathfrak{D}_{+}$ of the contact form $\alpha$ along $\divSet = \partial W_{+}$ together with the almost complex structures and Kuranishi data using which we computed $\aug_{+}=\aug_{W}$. \Cref{thm:no_augmentation} tells us that $\aug_{+}$ annihilates every good Reeb orbit generator of the contact homology algebra. We say that such an augmentation is \emph{trivial}\footnote{This is not standard but just for convenience, as the trivial map is not necessarily an augmentation in general. In particular, the trivial augmentation here is by no sense canonical in the general theory.}. Equip $W_{-}=W$ with the data $\mathfrak{D}_{-}$ of $\psi_{\BO}^{\ast}\alpha$ together with some choices of almost complex structures and Kuranishi data compatible with the data pulled back via $\psi_{\BO}$ along $\partial W_{-}$. We seek to show that the induced augmentation $\aug_{-}$ determined by $\mathfrak{D}_{-}$ is DG homotopic to the trivial augmentation.

We have $\psi_{\BO}^{\ast}\alpha=\alpha$ along $\divSet_{\Sigma} \cup (V \times \Circle)_{-1}$ and along $(V \times \Circle)_{1}$ where $\alpha = \rd q + \beta_{V}$ we have $\psi_{\BO}^{\ast}\alpha = \rd q + \beta_{V} + \eta$ for a closed $1$-form supported away from $\partial V$.

\begin{lemma}\label{lem:Gray}
    Let $\eta$ be a closed $1$-form on a Liouville domain $(V,\beta_{V})$ with support disjoint from $\partial V$. Then there is a $t\in [0,1]$ family of compactly supported isotopies $\delta_t:V \times \Circle_{q} \to V\times \Circle_{q}$ such that $\delta_t^*(\rd q + \beta_{V}+t\eta)=\rd q + \beta_{V}$, $\phi_0=\Id$, and $\delta_{t}$ is the identity in a neighborhood of $\partial V \times \Circle$ for all $t$.
\end{lemma}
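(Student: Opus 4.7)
The plan is to prove this by a Moser-type argument applied to the family $\alpha_t = \rd q + \beta_V + t\eta$. We seek a time-dependent vector field $X_t$, compactly supported away from $\partial V \times \Circle_q$, whose flow $\delta_t$ satisfies $\delta_t^\ast \alpha_t = \alpha_0$. Differentiating this condition in $t$ reduces us to solving
\begin{equation*}
\mathcal{L}_{X_t}\alpha_t = -\partial_t\alpha_t = -\eta,
\end{equation*}
together with the boundary/support conditions.

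The key observation is that $\rd\alpha_t = \rd\beta_V$ is independent of $t$ and of the $\Circle_q$ coordinate, and is a symplectic form on $V$. Since $\eta$ is a closed $1$-form compactly supported in $V^\circ$, the equation $\iota_Y \rd\beta_V = -\eta$ determines a unique vector field $Y$ on $V$, which is compactly supported in $V^\circ$ (and in particular vanishes near $\partial V$). I would then try the ansatz
\begin{equation*}
X_t = Y + a_t\,\partial_q, \qquad a_t = -\beta_V(Y) - t\,\eta(Y),
\end{equation*}
where $Y$ and $a_t$ are pulled back to $V \times \Circle$ in the obvious way. By construction $X_t$ vanishes wherever $\eta$ and $Y$ vanish, hence near $\partial V \times \Circle_q$, so it is compactly supported there.

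The verification is then a direct computation: $\iota_{X_t}\alpha_t = a_t + \beta_V(Y) + t\,\eta(Y) = 0$, while $\iota_{X_t}\rd\alpha_t = \iota_Y \rd\beta_V = -\eta$ because $\partial_q$ is in the kernel of $\rd\beta_V$ (as $\beta_V$ is pulled back from $V$). Hence $\mathcal{L}_{X_t}\alpha_t = \rd(\iota_{X_t}\alpha_t) + \iota_{X_t}\rd\alpha_t = -\eta$, as required. Completeness of the flow for $t \in [0,1]$ is automatic from compact support, giving the isotopy $\delta_t$ with $\delta_0 = \Id$ and $\delta_t = \Id$ on a neighborhood of $\partial V \times \Circle$.

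I do not anticipate a serious obstacle. The only mild subtlety is ensuring that $\eta$ being merely closed (not necessarily exact) causes no problem; this is handled automatically by working with the symplectic dual $Y$ of $-\eta$ under $\rd\beta_V$ rather than trying to primitivise $\eta$. The $\partial_q$ correction $a_t$ is precisely what is needed to kill $\rd(\iota_{X_t}\alpha_t)$ so that only the $\iota_{X_t}\rd\beta_V$ contribution survives in $\mathcal{L}_{X_t}\alpha_t$.
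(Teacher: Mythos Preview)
Your proof is correct and follows essentially the same Gray--Moser argument as the paper: both differentiate $\delta_t^\ast\alpha_t=\alpha_0$ to reduce to $\mathcal{L}_{X_t}\alpha_t=-\eta$, and both solve this with $X_t\in\ker\alpha_t$ so that only the term $\iota_{X_t}\rd\alpha_t=-\eta$ survives. The only difference is presentational: the paper argues abstractly that such $X_t$ exists by non-degeneracy of $\rd\alpha_t$ on $\xi_t$ (noting $\partial_q$ is Reeb for all $t$), while you write down the explicit decomposition $X_t=Y+a_t\partial_q$ with $\iota_Y\rd\beta_V=-\eta$ and $a_t$ chosen to force $\alpha_t(X_t)=0$.
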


\begin{proof}
    We write $\alpha_t=\lambda+t\eta+\rd q$, which is a contact form.   We assume $\phi_t$ is generated from integrating a $t$-dependent vector field $X_t$. Then $\phi_t^*\alpha_t=\alpha_0$ is equivalent to 
    $$\eta+L_{X_t}\alpha_t=0$$
    which, by Cartan's magic formula, is
    $$\eta+\iota_{X_t}(\rd \alpha_t)+\rd \alpha_t(X_t)=0.$$
    If we assume $X_t\in \xi_t=\ker \alpha_t$, then we need $\eta+\iota_{X_t}(\rd \alpha_t)=0$. Note that $\partial_q$ is the Reeb vector field for any $\alpha_t$. The equation $\eta+\iota_{X_t}(\rd \alpha_t)=0$ holds in $\partial_q$ direction tautologically. Then such $X_t\in \xi_t$ can be solved from $\eta+\iota_{X_t}(\rd \alpha_t)=0$ as $\rd \alpha_t$ is non-degenerate on $\xi_t$. That $\eta$ is compactly supported implies that $\phi_t$ is compactly supported.
\end{proof}

\begin{lemma}\label{Lemma:Double}
Let $(N, \xi)$ be a neighborhood of a convex hypersurface given by gluing the boundary of a Liouville domain $(W, \beta_{W})$ to itself using $\Id_{\partial W}$. In other words, this convex hypersurface is a \emph{symmetric double}. Then $CH(N, \xi)$ is non-zero.
\end{lemma}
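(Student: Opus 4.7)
The plan is to apply the Algebraic Giroux Criterion (\Cref{thm:AGC}) with $W_+ = W_- = W$ and gluing map $\phi = \Id_{\partial W}$, for which the required DG homotopy becomes essentially tautological. First I would fix an admissible package of auxiliary data needed to compute the contact homology algebra $CC_*(\partial W, \beta_W|_{\partial W})$ up to any chosen action threshold, and to define the induced augmentation $\aug_W : CC_*(\partial W) \to \Q$ from the filling $(W, \beta_W)$: a non-degenerate contact form on $\partial W$, a cylindrical almost complex structure on $\R \times \partial W$, a compatible almost complex structure on $\completion{W}$, and Bao--Honda Kuranishi perturbation data. The key move is then to use the \emph{same} data for both $W_+$ and $W_-$, so that $\aug_+$ and $\aug_-$ are literally the same DGA morphism $CC_*(\partial W) \to \Q$, defined by counting identical moduli spaces of perturbed holomorphic planes in a single copy of $\completion{W}$.

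Since $\phi = \Id_{\partial W}$ induces $\phi_* = \Id$ on $CC_*(\partial W)$, we have $\aug_- \circ \phi_* = \aug_- = \aug_+$ on the nose. Two equal DG morphisms are trivially DG homotopic (the constant homotopy works), so by \Cref{thm:AGC} we conclude $CH(N, \xi) \neq 0$.

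The only thing that needs checking, and which I expect to be the main (minor) obstacle, is that the symmetric identification of auxiliary data between $W_+$ and $W_-$ is genuinely compatible with the conventions used in \Cref{thm:AGC}. Specifically, the construction of the convex neighborhood uses $\beta = \pm \beta_{\pm}$ on the two regions, so I would verify that the sign flip on the negative region does not alter the induced contact form on the common boundary (it does not, because the gluing is defined precisely so that the two boundary contact forms agree) and that the orientation and sign conventions for counting holomorphic curves in the two copies of $\completion{W}$ are also symmetric. Given the "symmetric double" hypothesis, this compatibility is built in, and the proof reduces to the tautological observation above.
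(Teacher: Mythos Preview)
Your proposal is correct and matches the paper's own proof essentially verbatim: use the same contact homology data on both copies of $W$ so that $\aug_+ = \aug_-$ and $\phi_* = \Id$, then invoke \Cref{thm:AGC}. The paper's proof is precisely this two-line argument, without elaborating on the sign/orientation compatibility you mention (which is indeed built into the symmetric-double setup).
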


\begin{proof}
We can use exactly the same choices of contact homology data to determine the augmentations associated to the positive and negative regions of this convex hypersurface. Hence $CH(N, \xi)\neq 0$ by Theorem \ref{thm:AGC}.
\end{proof}

\begin{prop}\label{prop:null}
The augmentation $\aug_{-}$ determined by $\mathfrak{D}_{-}$ is DG homotopic to the trivial augmentation.
\end{prop}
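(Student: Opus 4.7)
The plan is to use the strict contactomorphism furnished by \Cref{lem:Gray} to transport the triviality of $\aug_{+}$ (established in \Cref{thm:no_augmentation}) to a canonically trivial augmentation of the DGA $CC_{*}(\divSet, \psi_{\BO}^{*}\alpha)$, and then to invoke invariance of SFT augmentations under changes of auxiliary data on the filling to conclude that $\aug_{-}$ is DG homotopic to it.

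I would begin by applying \Cref{lem:Gray} with $t=1$ to obtain a self-diffeomorphism $\delta_{1}$ of $(V \times \Circle)_{1}$, supported away from $\partial V \times \Circle$, with $\delta_{1}^{*}(\rd q + \beta_{V} + \eta) = \rd q + \beta_{V}$. Extending by the identity across $\divSet_{\Sigma}$ and $(V \times \Circle)_{-1}$ produces a strict contactomorphism $\Psi : (\divSet, \alpha) \to (\divSet, \psi_{\BO}^{*}\alpha)$, and a standard collar-neighborhood construction extends $\Psi$ to a compactly supported diffeomorphism $\widetilde{\Psi}$ of $\completion{W}$ supported in a neighborhood of $(V \times \Circle)_{1}$. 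Pushing forward via $\widetilde{\Psi}$ the Liouville form, the almost complex structure $J_{W}$, the holomorphic foliation $\foliation_{W}$, and the Kuranishi/multisection data of $\mathfrak{D}_{+}$ produces a choice of data $\mathfrak{D}_{+}'$ on the filling $W$ whose boundary contact form is $\psi_{\BO}^{*}\alpha$. By naturality of the moduli-space constructions under diffeomorphism, the resulting augmentation $\aug_{+}' : CC_{*}(\divSet, \psi_{\BO}^{*}\alpha) \to \Q$ satisfies $\aug_{+}' \circ \Psi_{*} = \aug_{+}$, where $\Psi_{*}$ denotes the DGA isomorphism induced by $\Psi$. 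The triviality of $\aug_{+}$ from \Cref{thm:no_augmentation} then forces triviality of $\aug_{+}'$ on every Reeb orbit generator.

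To close the argument, I would observe that both $\aug_{-}$ and $\aug_{+}'$ are augmentations of the common DGA $CC_{*}(\divSet, \psi_{\BO}^{*}\alpha)$ arising from the same filling $W$ but from different choices of auxiliary data on $\completion{W}$ (almost complex structure, Liouville form, and Kuranishi/multisection perturbations). The general invariance principle for SFT augmentations in the Bao--Honda framework, realized by interpolating these choices through a generic path and counting parametrized $0$-dimensional moduli of planes in $\completion{W}$, then provides a DG homotopy between $\aug_{-}$ and $\aug_{+}'$; concatenating with the triviality of $\aug_{+}'$ yields the desired DG homotopy from $\aug_{-}$ to the trivial augmentation.

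The main obstacle is this final invariance step. Unlike the proof of \Cref{thm:no_augmentation}, where the foliation $\foliation_{W}$ controlled every holomorphic plane in sight, the foliation is in general destroyed along an interpolation of data, so one cannot simply re-run the foliated computation. Instead, one must appeal directly to the continuation-map/chain-homotopy machinery embedded in \cite{BH} (or the analogue in \cite{Pardon:Contact}), which is developed in full generality for Kuranishi perturbation schemes and requires only that the boundary contact form be held fixed along the path. Since only the existence of the DG homotopy is needed, this reduces to a standard (if technical) application of that invariance machinery.
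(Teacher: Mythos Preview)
Your argument is essentially correct, but it takes a different route from the paper's. You transport $\mathfrak{D}_{+}$ through the strict contactomorphism $\Psi=\psi_{\delta_{1}}$ to obtain a second set of filling data $\mathfrak{D}_{+}'$ with boundary form $\psi_{\BO}^{*}\alpha$, and then appeal to the general invariance of SFT augmentations (up to DG homotopy) under change of interior auxiliary data to identify $\aug_{+}'$ with $\aug_{-}$. This works, provided one checks that the invariance statement in the Bao--Honda framework indeed produces a DG homotopy in the precise sense of \cite[Definition~14.5.1]{Avdek:Hypersurface} used in Theorem~\ref{thm:AGC}; that compatibility is believable but is an external ingredient you would have to import.

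The paper instead avoids invoking any black-box invariance by running Theorem~\ref{thm:AGC} backwards. It glues $W_{-}$ to $W_{+}$ along $\psi_{\delta_{1}}$ to form an auxiliary convex hypersurface $\hypersurface_{\delta}$; since $\psi_{\delta_{1}}$ is isotopic to the identity through contactomorphisms (Lemma~\ref{lem:Gray}), $\hypersurface_{\delta}$ is a symmetric double, so Lemma~\ref{Lemma:Double} gives $CH(N(\hypersurface_{\delta}))\neq 0$. The ``only if'' direction of Theorem~\ref{thm:AGC} then \emph{outputs} the desired DG homotopy between $\aug_{+}$ and $\aug_{-}\circ(\psi_{\delta_{1}})_{*}$, automatically in the correct sense. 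Your approach is more direct and conceptually transparent; the paper's is more self-contained, using only results already established in the article and sidestepping the need to verify that an externally sourced chain homotopy matches the paper's notion of DG homotopy.
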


\begin{proof}
Let $\psi_{\delta_{t}}$ be the map $\divSet \rightarrow \divSet$ given by the identity along $\divSet_{\Sigma} \cup (V \times \Circle)_{-}$ and $\delta_{t}$ along $(V \times \Circle)_{+}$ where $\delta_{t}$ is as in Lemma \ref{lem:Gray}. Then $\psi_{\delta_{1}}^{\ast}\alpha = \psi_{\BO}^{\ast}\alpha$ so we can form a convex hypersurface $\hypersurface_{\delta}$ by gluing the boundaries of the $W_{-}$ to $W=W_{+}$ together using $\psi_{\delta_{1}}$. Since this convex hypersurface is determined only by the contactomorphism, Lemma \ref{lem:Gray} tells us that $\hypersurface_{\delta}$ is a symmetric double. Since we can use the contact homology data $\mathfrak{D}_{\pm}$ to compute the augmentations associated to the positive and negative regions of $\hypersurface_{\delta}$, Theorem \ref{thm:AGC} and Lemma \ref{Lemma:Double} combine to inform us that $\aug_{-}$ computed using $\mathfrak{D}_{-}$ is DG homotopic to $\aug_{+}$ computing using $\mathfrak{D}_{+}$. Since this $\aug_{+}$ is a trivial augmentation, the proof is complete.
\end{proof}

\begin{proof}[Proof of \Cref{thm:main}]
By \Cref{prop:null}, we have $\aug_{-}\circ (\psi_{\BO})_*$ is DG homotopic to the trivial augmentation. Therefore \Cref{thm:AGC} implies that a $\R_{\tau}$-invariant neighborhood of the Bourgeois convex hypersurface has non-vanishing contact homology. Since this $\R_{\tau}$-invariant neighborhood covers $\BO(V,\phi)$, we conclude that $\BO(V,\phi)$ is tight.
\end{proof}

\bibliographystyle{alpha}      
\bibliography{ref}

\bigskip
\footnotesize

Russell Avdek, \par\nopagebreak
\textsc{Laboratoire de Mathématiques d'Orsay, Universit\'{e} Paris-Saclay, France}\par\nopagebreak
\textit{E-mail address:} \href{mailto:russell.avdek@gmail.com}{russell.avdek@gmail.com}

\bigskip

Zhengyi Zhou, \par\nopagebreak
\textsc{Morningside Center of Mathematics, Chinese Academy of Sciences;}\par\nopagebreak
\textsc{Academy of Mathematics and Systems Science, Chinese Academy of Sciences, China}\par\nopagebreak
\textit{E-mail address}: \href{mailto:zhyzhou@amss.ac.cn}{zhyzhou@amss.ac.cn}

\end{document}